 \def\bR{\mathbb{R}}
 \newcommand{\dv}{\operatorname{div}}
 \theoremstyle{plain}
 \newtheorem{theorem}{Theorem}[section]
 \newtheorem{lemma}[theorem]{Lemma}
 \theoremstyle{definition}
 \newtheorem{definition}[theorem]{Definition}
 \theoremstyle{remark}
 \numberwithin{equation}{section}
\begin{document}
 	
 	\title[five dimensional stationary Navier-Stokes equations]{Local $\varepsilon$-regularity criteria for the five dimensional stationary Navier-Stokes equations}
 	
 	\author[X. Cui]{Xiufang Cui}
 	\address[X. Cui]{School of Mathematical Sciences, Shanghai Jiao Tong University, Shanghai, 200240,  China}
 	\email{cuixiufang@sjtu.edu.cn}
 \thanks{ }

 	\begin{abstract}
 		We establish  interior and boundary $\varepsilon$-regularity criteria  at one scale for suitable weak solutions to the five dimensional stationary incompressible Navier-Stokes equations 
  which improve previous results in \cite{K} and \cite{Str}. Our proof is based on an iteration argument, Campanato's method, and interpolation techniques.
 	\end{abstract}
 	
 	\maketitle
 	
 	\section{Introduction}
 	In this paper,  we consider the five dimensional stationary incompressible Navier-Stokes equations with unit viscosity:
 	\begin{align}\label{1.1}
 		\begin{cases}
 			u\cdot\nabla u -\Delta
 			u+\nabla p=f\\
 			\nabla\cdot u=0
 		\end{cases}
 		\quad \text{in}\ \Omega,
 	\end{align}
 	where $u=\big(u_1(x),\dots , u_5(x)\big)\in \mathbb{R}^5$ is the velocity field, $p=p(x)\in \mathbb{R}$ is the pressure and $f=\big(f_1(x), \dots, f_5(x)\big) \in \mathbb{R}^5$ is the given external force.  We prove that any suitable weak solution $u$  of \eqref{1.1} is locally H\"older continuous  in $\Omega$ when  certain scale invariant quantities are small. Here $\Omega$ is either the unit ball $B_1$ or the unit upper half ball $B_1^+$ in $\mathbb{R}^5$. In the upper half ball case, we impose the zero Dirichlet boundary condition on a flat portion of the boundary
 	\begin{align}\label{1.2}
 		u=0\quad \text{on}\   \partial \Omega\cap\{x_5=0\}.
 	\end{align}
 	
The  regularity problem of  the Navier-Stokes equations  is an important question in fluid dynamics, and  much effort has been made in the literature.  In \cite{Hopf51, Leray34},  Leray and Hopf proved the existence of weak solutions to the  time-dependent three dimensional  incompressible Navier-Stokes equations for any given $L^2$ initial data. However, the regularity  and uniqueness of such weak solutions are still open. Many criteria which ensure  the regularity of weak solutions have been developed  with various conditions on  the velocity, the vorticity, or  the pressure, and  the most famous one is the Lady\v{z}enskaja-Prodi-Serrin criteria, see \cite{BKM, BG02, Leray34, Prodi59, Serrin63} and the references therein.  On the other hand, there is an extensive literature  devoting to estimating the size of   possible singularity set.  
In  a series of papers \cite{S1,S2,S3}, Scheffer studied the Hausdorff measure of possible singular set for weak solutions of  three dimensional Navier-Stokes equations.  In \cite{CKN},   Caffarelli, Kohn, and Nirenberg  introduced the notion of suitable weak solutions and proved that the one dimensional Hausdorff measure of  singular set is zero for any suitable weak solution of three dimensional Navier-Stokes equations. A simplified proof was given  later by Lin in \cite{Lin}  when the external force $f$ is zero and the pressure $p\in L^{3/2}(Q_1)$.  In \cite{TX}, Tian and Xin proved  a local uniform $L^\infty$ bound of $\nabla u$ for smooth solutions when either the scaled local  $L^2$ norm of the vorticity or the scaled local total energy is small. In   \cite{V}, Vasseur gave a new proof of the result in \cite{CKN} by using the De Giorgi iteration. In particular, he proved that if the quantity
 	$$
 	\sup_{t\in [-1,0]}\int_{B_1}|u(t,x)|^2\,dx + \int_{Q_1}|\nabla u|^2\, dxdt+\int_{-1}^{0}\|p\|^{\tilde q}_{L^1_{x}(B_1)}\, dt
 	$$
 	is sufficiently small, then $u$ is regular in $B_{1/2}$.
 	The above result was recently extended to the boundary case in \cite{HK}, where it was also proved that any suitable weak solution $u$ of the three dimensional incompressible  Navier-Stokes equations is regular in $\overline{Q_{1/2}^+}$ provided that
 	$$
 	\|u\|_{L^q(Q_1^+)}+\|p\|_{L_t^{\tilde q}L_x^1(Q_1^+)}
 	$$
 	is sufficiently small,  where $q>5/2$ and $\tilde q>1$.

 	Considerable  attention has also been paid to the stationary Navier-Stokes equations. In \cite{Ge79}, the author proved the existence of regular solutions to   the $n$ dimensional stationary incompressible Navier-Stokes equations, where the integer $n\in\{2,3,4\}$. See also \cite{Tsai18}. Since five is the smallest dimension in which the stationary Navier-Stokes equations are  super-critical,  there is a great number of  papers devoted to this case. See, for instance,  \cite{K, Str, Struwe2} and the references therein. In \cite{Str}, for the five dimensional  stationary incompressible Navier-Stokes equations \eqref{1.1},   Struwe  proved that any suitable weak solution   $u$ is   H\"older continuous near $x\in \Omega$     provided  that
 	\begin{equation}
 	\label{eq1.09}
 	\limsup_{r\rightarrow 0}\frac{1}{r}\int_{B_r(x)}|\nabla u|^2\ dx
 	\end{equation}
 	is small and the external force $f\in L^q(\Omega)$,  where $q>5/2$ and $\Omega\subset \mathbb{R}^5$ is an open set. The above result was extended to the boundary  case by Kang in \cite{K} under the condition that either
 	\begin{equation}\label{eq8.08}
 	\limsup_{r\rightarrow 0}\frac{1}{r}\int_{B(x, r)\cap\Omega}|\nabla u|^2\ dx=0
 	\end{equation}
 	or
 	\begin{equation} \label{eq8.09}
 	\liminf_{r\rightarrow 0}\frac{1}{r^2}\int_{B(x, r)\cap\Omega}|u|^3\ dx=0,
 	\end{equation}
 	where  $x\in \partial \Omega$ and  $\Omega $ is any smooth domain in $\bR^5$.   For more relevant research about  the  higher dimensional stationary Navier-Stokes equations,  we refer readers  to \cite{DG, DS,F1, F2, F3, F4,JS, Ko, LY21} and the references therein. 

 	In this paper, we use an iteration argument and interpolation technique to  establish local $\varepsilon$-regularity criteria at one scale for suitable weak solutions to the five dimensional stationary incompressible Navier-Stokes equations  in either the  ball  $B_1$ or the  upper half ball $B_1^+$.
 	Our main result, Theorems \ref{Them 1}, reads that a suitable weak solution $u$ in $B_1$ is regular in $B_{1/2}$ provided that
 	$$
 	\|u\|_{L^q(B_1)} +\|f\|_{L^{2}(B_1)}
 	$$
 	is sufficiently small,  where $q$ is any exponent great than $5/2$.
 	In Theorem \ref{Them 2},  we obtain a similar result near the boundary. Compared to the conditions \eqref{eq1.09}, \eqref{eq8.08}, and \eqref{eq8.09} in the previous papers \cite{K} and \cite{Str},   our smallness condition on the $L^q$ norm of $u$ is substantially weaker.
 	
 	Let us give an outline of the proof.  Following the ideas in \cite{HK},  we use Campanato's characterization of H\"older continuity and  an iteration argument  to prove the regularity of $u$. We decompose $u$ as $u=w+v$, where $v$ is  a harmonic function.  We  estimate $w$ by the $L^p$ estimates of elliptic equations  and the  uniform decay rates of certain scale invariant quantities.  For $v$, we  use the properties of harmonic functions. To be  more precise,  we  first show that  the  values of  the scale invariant quantities $A+E$, $A^++E^+$, $G$, and  $G^+$ in  a small ball can be  controlled by their values  in a larger ball. We refer the reader to Section 2 for the definitions of $A+E$, and other scale invariant quantities. This part of the argument is standard. See, for instance, \cite{GKT06, GKT07, Lin}. Then we  derive the smallness of $E$ and $E^+$ in Lemmas \ref{lem 3.3} and \ref{lem 4.3} under the conditions of Theorems \ref{Them 1} and \ref{Them 2},  respectively.  This is a key estimate in the proofs.  Based on the above results, we  show certain uniform decay rates of the scale invariant quantities by induction in Lemmas \ref{lem 3.4} and \ref{lem 4.4}. Combining the  uniform decay rates and the  $L^p$ estimates for elliptic equations, we obtain  the H\"older continuity of $u$ in  either $B_{1/2}$ or $B^+_{1/2}$
  by using Campanato's characterization of H\"older continuity.
 	Compared to the interior case, the main obstacle in showing the $\varepsilon$-regularity criteria in the upper half ball $B_1^+$ is to estimate  the pressure  term   up to the  boundary. More precisely,  when we estimate the quantity $G$ in $B_1$, the pressure is decomposed as the sum of a harmonic function and a term which can be controlled by the Calder\'on-Zygmund estimate. However, this method does not seem to work for the boundary case. In the upper half ball $B_1^+$, we adopt the pressure decomposition originally due to Seregin \cite{Se3}, and then apply the known solvability and boundary regularity results for the linear Stokes systems (see \cite{Ga94, K, MS, Se2}).  Compared to \cite{HK},  we only need the smallness assumption of $u$ and  $f$  thanks to the local $W^{k,p}$ estimate for linear Stokes systems. Besides, due to the presence of  the external force $f$ and the  different Sobolev embedding inequality in  five dimensions, it is  quite delicate to choose suitable  exponents for the scale invariant quantities.
 	
 	The main results of this paper are stated as follows.   The notation in Theorems \ref{Them 1} and  \ref{Them 2} is introduced in Section 2.

 	\begin{theorem}\label{Them 1} Let  $\Omega=B_1$ and the pair $(u,p)$ be a suitable weak solution to the five dimensional stationary incompressible Navier-Stokes equations \eqref{1.1}.  Let  $p\in L^1(\Omega)$ and $f\in L^q(\Omega)$ for some $q\in(5/2, 10/3)$. There exists a positive constant $\varepsilon$   such that if
 		\begin{align}\label{1.3b}
 			\int_{B_1}|u|^q\ dx + \int_{B_1}|f|^{2}\ dx <\varepsilon,
 		\end{align}
 		then $u$ is regular in $B_{1/2}$.
 	\end{theorem}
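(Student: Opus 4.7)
The plan is to follow the Campanato-iteration strategy of \cite{HK}, adapted to the stationary 5D setting. I would work with the scale-invariant quantities $A(r), E(r), G(r)$ defined in Section~2 on balls $B_r(x_0)\subset B_1$ (measuring rescaled kinetic, Dirichlet, and pressure energies on $B_r(x_0)$), together with the rescaled $L^q$ and $L^2$ norms of $u$ and $f$. The goal is to prove, by induction on dyadic scales $r_k=\th^k$, a uniform decay
\[
A(r_k)+E(r_k)+G(r_k)\le C\,r_k^{2\mu}
\]
at every $x_0\in B_{1/2}$ for some $\mu>0$, and then invoke Campanato's characterization of H\"older continuity to conclude $u\in C^{0,\mu}(B_{1/2})$.

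The base of the induction is a smallness estimate for $E$ at a fixed scale (the analogue of Lemma~\ref{lem 3.3}). Testing the weak formulation of \eqref{1.1} against $\vp^2 u$ for a cutoff $\vp$, the cubic term $\int\vp^2(u\cdot\na u)\cdot u$ is controlled by H\"older's inequality combined with the 5D Sobolev embedding $H^1\hookrightarrow L^{10/3}$; for $q\in(5/2,10/3)$ an interpolation between $\|u\|_{L^q}$ and $\|u\|_{L^{10/3}}\lesssim\|\na u\|_{L^2}$ produces a factor of $\|\na u\|_{L^2}^{2-\dl}$ with some $\dl>0$, which can be absorbed on the left by Young's inequality, leaving the small prefactor $\|u\|_{L^q}$ outside. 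The external force contributes $\|f\|_{L^2}\|u\|_{L^2}$, also small by \eqref{1.3b}. The pressure is handled by the Calder\'on--Zygmund decomposition $p=p_1+p_2$ on a slightly larger ball, where $-\Delta p_1=\p_i\p_j(u_iu_j)-\dv f$ with zero Dirichlet data and $p_2$ is harmonic; this converts the weak hypothesis $p\in L^1$ into usable $L^{q/2}$ control of $p_1$ plus mean-value bounds for $p_2$.

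For the inductive step (the analogue of Lemma~\ref{lem 3.4}), I would decompose $u=v+w$ on each ball $B_r(x_0)$ with $v$ harmonic carrying the boundary data and $w$ solving a Stokes-type problem with right-hand side $f-u\cdot\na u$ and zero boundary values. Harmonic estimates yield Campanato-type decay $r\mapsto r^{2\mu}$ for $v$, while local $L^p$ theory for the elliptic/Stokes system bounds $w$ by the rescaled smallness quantities at scale $r$. Combining these yields a recursion
\[
A(\th r)+E(\th r)+G(\th r)\le \th^{2\mu}\bigl(A(r)+E(r)+G(r)\bigr)+C\,\ve,
\]
which closes once $\th$ is small enough to beat the accumulated constants and $\ve$ from \eqref{1.3b} is correspondingly small; iterating over $r_k=\th^k$ delivers the desired Campanato decay.

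The main obstacle is the base-scale smallness of $E$: the hypothesis \eqref{1.3b} gives smallness only in the supercritical $L^q$ norm of $u$, not in the Morrey quantity \eqref{eq1.09} of \cite{Str} nor in the scale-invariant $L^3$ quantity \eqref{eq8.09} of \cite{K}, so one has to carefully balance exponents in a Caccioppoli-type argument confined to the window $q\in(5/2,10/3)$. The lower bound $q>5/2$ is dictated by the scaling of the nonlinear term, while the upper bound $q<10/3$ is dictated by the 5D Sobolev exponent; outside this window the recursion for $A+E$ would fail to close. I expect this exponent-balancing, together with extracting $L^{q/2}$ control of $p_1$ from only the assumption $p\in L^1$, to be the main technical hurdles of the proof.
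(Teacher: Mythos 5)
Your outline is the same in spirit as the paper's proof (base-scale smallness of $E$ from the local energy inequality, decay of scale-invariant quantities by induction, harmonic-plus-correction decomposition, Campanato), but two steps, as written, would not go through. The first concerns the pressure. The theorem demands a threshold $\varepsilon$ independent of the solution, while the only hypothesis on $p$ is $p\in L^1(B_1)$ with no size restriction. In your scheme the pressure enters the base-scale energy estimate through $\int (p-c)\,u\cdot\nabla\psi$, and after your decomposition $p=p_1+p_2$ the harmonic remainder $p_2$ is controlled, via mean-value bounds, only by $\|p\|_{L^1(B_1)}$, which is finite but not small; the resulting bound for $E$ at the base scale then carries this norm, so your $\varepsilon$ would depend on $\|p\|_{L^1(B_1)}$ --- i.e.\ you would prove the theorem under the stronger assumption \eqref{1.3} rather than \eqref{1.3b}. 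The paper removes this obstruction before any iteration: rewriting the system as a Stokes system with right-hand side $f-\dv(u\otimes u)$ and invoking the local $W^{k,p}$ estimate for Stokes systems (\cite{K}, Theorem 3.8) gives $\|p-(p)_{B_{1/2}}\|_{L^{q/2}(B_{1/2})}\le N\|u\|_{L^q(B_1)}^2+N\|f\|_{L^{q/2}(B_1)}+N\|u\|_{L^1(B_1)}$, with no pressure on the right, so after rescaling one may assume the localized pressure is itself small. Some step of this kind (alternatively, evaluating the harmonic part through $\nabla p_2=\Delta u-\dv(u\otimes u)+f-\nabla p_1$ against mollifiers, which again bounds it by $u$, $f$, $p_1$ alone) is indispensable; plain mean-value bounds on $p_2$ do not suffice.

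The second gap is the inductive recursion you display: $A(\th r)+E(\th r)+G(\th r)\le \th^{2\mu}\bigl(A(r)+E(r)+G(r)\bigr)+C\ve$ has a scale-independent error, so iterating gives only $A+E+G\lesssim \ve$ at all scales, never the Campanato decay $r^{2\mu}$; no choice of small $\th$ or $\ve$ repairs this. The error must itself decay in $r$: in the paper the nonlinear and pressure contributions are superlinear products of quantities already decaying by the induction hypothesis, and the force contributes $F(r)\lesssim r^{6-10/q}\|f\|_{L^q}^2$ with $6-10/q>2$ precisely because $q>5/2$; this bookkeeping is the content of Lemma \ref{lem 3.4} and of the final iteration \eqref{3.54} closed by Lemma \ref{lem 4.5}. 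Two further points in your base step need care. You cannot absorb the interpolated term $\|\nabla u\|_{L^2}^{2-\dl}$ at a single scale, since the cutoff places the Dirichlet energy on a strictly larger ball; the paper handles this with the expanding-ball iteration of Lemma \ref{lem 3.3} (Young's inequality produces $\dl^2E(\rho_{k+2})$, and one sums $\dl^kE(\rho_k)$). And testing with $\vp^2u$ is not legitimate for a suitable weak solution in five dimensions, since $|u|^2|\nabla u|$ need not be integrable for $u\in H^1$; one must use the postulated local energy inequality \eqref{2.1}, whose specific form (with $|u|^2\Delta\psi$ and $(|u|^2+2p)u\cdot\nabla\psi$) is exactly what makes the estimate meaningful. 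Finally, the upper bound $q<10/3$ is a technical convenience for the exponent choices, not a genuine obstruction: for $q\ge 10/3$ the hypothesis on $B_1$ is stronger by H\"older's inequality.
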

 
 	\begin{theorem}\label{Them 2}  Let  $\Omega=B_1^+$ and the pair $(u,p)$ be a suitable weak solution to the five dimensional stationary incompressible Navier-Stokes equations \eqref{1.1}  with the boundary condition \eqref{1.2}. Let  $p\in L^1(\Omega)$ and $f\in L^q(\Omega)$ for some $q\in (5/2, 10/3)$. There exists a positive constant $\varepsilon$ such that if
 		\begin{align}\label{1.4b}
 			\int_{B_1^+}|u|^q\ dx + \int_{B_1^+}|f|^{2}\ dx <\varepsilon,
 		\end{align}
 		then $u$ is regular in  $\overline{B^+_{1/2}}$.
 	\end{theorem}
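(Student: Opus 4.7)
The plan is to follow the interior strategy of Theorem~\ref{Them 1} but replace every ingredient that requires interior regularity of harmonic functions or the interior Calder\'on-Zygmund decomposition of the pressure by a boundary version adapted to the half ball. At the core, I want to show that for every $x_0\in \overline{B_{1/2}^+}$ and every sufficiently small $r>0$ with $B_r^+(x_0)\subset B_1^+$, the scale-invariant quantities
\begin{equation*}
\Phi(r):=A^+(r)+E^+(r)+G^+(r)
\end{equation*}
(defined in Section~2) satisfy a geometric decay of the form $\Phi(\th r)\le C\th^{2\al}\Phi(r)$ plus lower-order terms, for some $\al>0$ and a fixed small $\th\in(0,1)$. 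Once this decay is established, Campanato's characterization of H\"older continuity will yield regularity of $u$ in $\overline{B_{1/2}^+}$.

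To set this up, first I would use the local energy inequality in the half ball, together with Sobolev, Poincar\'e, and the Dirichlet condition on $\{x_5=0\}$, to bound $A^+(r/2)+E^+(r/2)+G^+(r/2)$ by a nonlinear combination of the same quantities on $B_r^+$, plus terms involving $\|u\|_{L^q(B_1^+)}$ and $\|f\|_{L^2(B_1^+)}$. Running this inequality under the smallness assumption \eqref{1.4b} produces Lemma~\ref{lem 4.3}, namely smallness of $E^+$ at some initial scale $r_0$. The delicate point here is that the pressure carries no boundary condition on $\{x_5=0\}$, so a simple reflection is unavailable.

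To handle the pressure near the flat boundary, I would adopt the Seregin-type decomposition: for each half ball $B_r^+$ solve the auxiliary Stokes system
\begin{equation*}
\begin{cases}
-\Delta w+\na p_1=-u\cdot\na u+f,\q \na\cdot w=0 & \text{in } B_r^+,\\
w=0 & \text{on } \p B_r^+,
\end{cases}
\end{equation*}
and set $v=u-w$, $p_2=p-p_1$. Then $(v,p_2)$ solves the homogeneous Stokes system in $B_r^+$ with $v=0$ on $\{x_5=0\}$, and the boundary $W^{k,p}$ regularity theory for the Stokes system (cf.~\cite{Ga94,K,MS,Se2}) provides interior-type decay estimates on $v$ and on the harmonic/polynomial part of $p_2$ on smaller concentric half balls. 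For the inhomogeneous remainder, the local solvability estimate for the Stokes system controls $w$ and $p_1$ by $\|u\cdot\na u\|_{L^s}+\|f\|_{L^s}$ for an appropriate $s$; combining H\"older's inequality, the five-dimensional embedding $W^{1,2}\hookrightarrow L^{10/3}$, and the hypothesis $q>5/2$ closes these estimates. This is the boundary substitute for the interior Calder\'on-Zygmund step.

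With these ingredients in place, the induction proceeds as in Lemma~\ref{lem 4.4}: starting from the smallness of $E^+$ at the scale $r_0$, I would show by induction on $k$ that $\Phi(\th^k r_0)$ decays like $\th^{k\al}$ times the initial data plus $\th^{k\beta}\ve$ for some $\beta>0$. Feeding this geometric decay into Campanato's characterization gives H\"older continuity of $u$ up to the flat boundary, hence regularity in $\overline{B_{1/2}^+}$. The main obstacle I anticipate is the pressure step: verifying that the scale-invariant norms of $p_1$ and $p_2$ can each be controlled in a form compatible with the iteration. In five dimensions the critical exponents for $u$, $\na u$, $p$, and $f$ are all distinct, and any small mismatch would either break the nonlinear absorption in the recursion for $A^++E^+$ or prevent the forcing term from entering with the correct power of $r$.
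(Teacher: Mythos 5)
Your overall strategy coincides with the paper's: smallness of the pressure via the local Stokes estimate, a Seregin-type decomposition of $(u,p)$ into an inhomogeneous Stokes part (controlled by $\|u\cdot\nabla u\|_{L^{s}}+\|f\|_{L^{s}}$) and a homogeneous Stokes part vanishing on the flat boundary (controlled by Seregin/Kang boundary regularity), an energy-inequality iteration giving smallness of $E^+$ at a fixed scale (Lemma \ref{lem 4.3}), and an induction giving uniform decay of the scale-invariant quantities (Lemma \ref{lem 4.4}). Two remarks on that part: the auxiliary Stokes problem should be posed on a smooth intermediate domain $\tilde B$ with $B^+_{3/4}\subset\tilde B\subset B^+_1$ (as in the paper), not on $B^+_r$ itself, since the solvability theorem you invoke requires a smooth boundary and the half ball has a corner; this is a harmless but necessary adjustment, and the homogeneous part then vanishes only on the flat portion, which is exactly the setting of the local boundary estimate you cite.

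The genuine gap is in your final step: ``feeding the geometric decay of $\Phi=A^++E^++G^+$ into Campanato's characterization'' does not by itself yield H\"older continuity. The recursion you set up contains the linear term $\gamma^{2}E^+(\rho)$ (and the forcing term $\gamma^{-4}F^+$), so the best decay it can produce is $E^+(x,\rho)\lesssim \rho^{2-l}$ with $2-l<2$; by Poincar\'e this only gives $\int_{B^+_\rho}|u-(u)_{B^+_\rho}|^2\lesssim \rho^{5-l}$, whereas Campanato (equivalently Morrey, which would need $E^+\lesssim\rho^{2+\alpha}$) requires an exponent strictly above $5$. The paper bridges this by a further decomposition of $u$ itself at each scale, $u=w+v$, where $w$ solves $\Delta w_i=\partial_i(p-(p)_{B^+(x,\rho)})+\partial_j(u_iu_j)+f_i$ with zero Dirichlet data and is estimated in $W^{1,10/7}$ using the decay of $G^+$, $P^+$, $A^++E^+$, while $v$ is harmonic and contributes a factor $\gamma^{7}$ to its mean oscillation; the iteration Lemma \ref{lem 4.5} then upgrades the mean-oscillation decay to $\rho^{5+l/2}$, which is what Campanato actually needs. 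Without this extra step (and the accompanying case analysis comparing $\rho$ with the distance $d_x$ to the flat boundary, which reduces interior points to Lemma \ref{lem 3.4}), your argument does not close.
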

 	
In this paper, we only consider the flat boundary for simplicity. The boundary regularity result in Theorem \ref{Them 2} still holds true for general $C^2$ boundary by following the argument, for example, in \cite{SSS}. We emphasize that  the strict inequality $q>5/2$  is necessary when we derive the smallness of $E$ and $E^+$  in Lemmas \ref{lem 3.3} and \ref{lem 4.3} and the uniform decay rates for the scale invariant quantities in Lemmas \ref{lem 3.4} and \ref{lem 4.4}. At the time of this writing, it is not clear to us whether  the result still holds when $q=5/2$. Besides,  even though our results refine the  known regularity criteria, they probably do not  improve the estimate of the  Hausdorff dimension of the singular set.  

 	The remainder of this paper is organized as follows. In Section 2, we introduce some notation and state  a lemma which will be frequently used in the proof of the main results. The interior  $\varepsilon$-regularity for suitable weak solutions is proved in Section 3. We prove the corresponding boundary  $\varepsilon$-regularity in Section 4.  Throughout this paper, we use  $N$ to denote various constants which may change from line to line. We also use the expression $N=N(\cdot\cdot\cdot)$ for the constant which depends on the contents  between the parentheses.

 	\section{Preliminaries}
 	In this section, we introduce some notation  and the definition of suitable weak solutions which will be used throughout the paper. We also present a key lemma which will be  used  to prove our main results.
 	
 	For any point $x_0=(x_0^1, x_0^2, \dots, x_0^5)\in \mathbb{R}^5$, we   use  the   notation
 	\begin{align*}
 		B(x_0,\rho)=\big\{x\in \mathbb{R}^5: \ |x-x_0|<\rho\big\}\quad \text{and}\quad
 		B^+(x_0,\rho)=B(x_0,\rho)\cap \mathbb{R}^5_+
 	\end{align*}
 	to denote the  balls in $\mathbb{R}^5$ and half balls in $\mathbb{R}^5_+$ with  center at  $x_0$ and radius $\rho>0$.   For the convenience of notation, we denote
 	\begin{align*}
 		B_\rho=B(0,\rho), \quad B_\rho^+=B^+(0,\rho). 
 	\end{align*}
 	For any nonempty open set $\Omega\in \bR^5$, we use  the abbreviation
 	\begin{align*}
 		(u)_{\Omega}=\frac{1}{|\Omega|}\int_{\Omega}u \ dx
 	\end{align*}
 	to denote the average of $u$ in $\Omega$,  where $|\Omega|$ as usual denotes the Lebesgue measure of $\Omega$.
 	
 	We introduce the following    scale invariant quantities:
 	\begin{align*}
 		A(x_0, r)&=\frac{1}{r^{3}}\int_{B(x_0, r)}|u|^2\ dx,\\
 		C(x_0, r)&=\frac{1}{r^{5-q}}\int_{B(x_0, r)}|u|^q\ dx,\\
 		E(x_0, r)&=\frac{1}{r}\int_{B(x_0,r)}|\nabla u|^2\ dx,\\
 		G(x_0, r)&=\frac{1}{r^\frac{10-15c}{4-c}}\int_{B(x_0, r)}|p-(p)_{B(x_0, r)} |^{\frac{5(1+c)}{4-c}}\ dx,\\
 		P(x_0, r)&= \frac{1}{r^3}\int_{B(x_0, r)}|p-(p)_{B(x_0, r)}| \ dx,\\
 		F(x_0, r)&= r\int_{B(x_0, r)}|f|^{2}\ dx,
 	\end{align*}
 	where $  c\in \Big(\frac{5-q}{6q-5}, \frac14\Big)$ is any fixed positive constant and $ r\in (0,\rho]$.
 	Similarly, we also define  the following   scale invariant quantities:
 	\begin{align*}
 		A^+(x_0, r)&=\frac{1}{r^{3}}\int_ {B^+(x_0, r)}|u|^2\ dx,\\
 		C^+(x_0, r )&=\frac{1}{r^{5-q}}\int_{B^+(x_0, r)}|u|^q\ dx,\\
 		E^+(x_0, r )&=\frac{1}{r}\int_{ B^+(x_0,r)}|\nabla u|^2\ dx,\\
 		G^+(x_0, r )&=\frac{1}{r^\frac{10-15c}{4-c}}\int_{B^+(x_0, r)}|p-(p)_{B^+(x_0, r)} |^{\frac{5(1+c)}{4-c}}\ dx,\\
 		P^+(x_0, r )&= \frac{1}{r^3}\int_{B^+(x_0, r)}|p-(p)_{B^+(x_0, r)}| \ dx,\\
 		F^+(x_0, r)&= r\int_{B^+(x_0, r)}|f|^{2}\ dx,
 	\end{align*}
 	where $ c\in \Big(\frac{5-q}{6q-5},\frac14\Big)$ is any fixed positive  constant and $r\in (0,\rho]$.

 	For simplicity, we often denote $A(r)=A(0, r)$ and similarly for  other scale invariant quantities.
 	We see that all these quantities are invariant under the natural scaling:
 	\begin{align}
 		\label{eq10.23}
 		u_\lambda:=\lambda u(\lambda x), \quad p_\lambda:=\lambda^2p(\lambda x), \quad f_\lambda :=\lambda^3f(\lambda x),
 	\end{align}
 	where $\lambda>0$ is any constant.
 	
 	Let $\Omega$ be a domain in $\mathbb{R}^5$ and  $\Gamma\in \partial\Omega$. We denote $\dot{C}_0^\infty(\Omega, \Gamma)$ to be the space of divergence-free infinitely differentiable vector fields which vanishes near $\Gamma$. Let $J(\Omega, \Gamma)$ be the closure of $\dot{C}_0^\infty(\Omega, \Gamma)$ in $H^1(\Omega)$.

 	In this paper, we focus on an important type of  solutions called suitable weak solutions. The definition of suitable weak solutions was first introduced in a celebrated paper \cite{CKN} by Caffarelli, Kohn, and Nirenberg.
 	\begin{definition} We say that a pair   $(u, p)$  is a   suitable weak solution to the stationary Navier-Stokes equations  \eqref{1.1} in $\Omega$ vanishing on $\Gamma$ if $(u, p)\in J(\Omega, \Gamma)\times L^1(\Omega)$  satisfies \eqref{1.1} in the  sense of distribution and  for any non-negative function $\psi\in C^\infty(\overline{\Omega})$ vanishing in a neighborhood of the boundary $\partial\Omega\backslash\Gamma$, we have the local energy inequality
 		\begin{align}\label{2.1}
 			2\int_{\Omega}|\nabla u|^2\psi \ dx\leq \int_{\Omega}\Big[|u|^2\Delta \psi+\big(|u|^2+2p\big)u\cdot\nabla\psi +2f\cdot u\psi\Big]\ dx.
 		\end{align}
 	\end{definition}
 	The following  key lemma  will be frequently  used in the proofs of our main results.
 	\begin{lemma}\label{lem 4.5}
 		Suppose $f(\rho_0)<N_0$. If there exist $\alpha>\beta>0$ and $N_1, N_2>0$ such that for any $0<r<\rho\leq \rho_0$, it holds that
 		\begin{align*}
 			f(r)\leq N_1  (r/\rho)^\alpha f(\rho)+N_2\rho^\beta,
 		\end{align*}
 		then there exist positive constants $N_3$ and $N_4$ depending on $N_0, N_1, N_2, \alpha$ and $ \beta$ such that
 		\begin{align*}
 			f(r)\leq N_3 (r/\rho_0)^\beta f(\rho_0)+N_4 r^\beta,
 		\end{align*}
 		where  $r\in(0,\rho_0]$ is any constant.
 	\end{lemma}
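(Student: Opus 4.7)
The statement is a classical Campanato-type iteration lemma (see, e.g., Giaquinta's book on multiple integrals), so I would follow the standard strategy: discretize the radii along a geometric sequence, iterate the hypothesis once per scale to derive geometric decay for $f$ on the discrete sequence, and then interpolate to arbitrary $r\in(0,\rho_0]$.

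More precisely, since $\alpha>\beta$, I would first choose $\tau\in(0,1)$ depending only on $N_1,\alpha,\beta$ so that $N_1\tau^{\alpha-\beta}\le 1/2$. Applying the hypothesis with $\rho$ arbitrary and $r=\tau\rho$ gives
\begin{equation*}
f(\tau\rho)\le \tfrac{1}{2}\tau^\beta f(\rho)+N_2\rho^\beta.
\end{equation*}
Specializing to $\rho=\tau^k\rho_0$, I would set $b_k:=\tau^{-k\beta}f(\tau^k\rho_0)$ and divide by $\tau^{(k+1)\beta}$ to obtain the linear recursion
\begin{equation*}
b_{k+1}\le \tfrac{1}{2}b_k+N_2\tau^{-\beta}\rho_0^\beta.
\end{equation*}
A one-line induction (summing the resulting geometric series) then yields $b_k\le b_0+2N_2\tau^{-\beta}\rho_0^\beta$ for all $k\ge 0$, which after multiplying back by $\tau^{k\beta}$ reads
\begin{equation*}
f(\tau^k\rho_0)\le (\tau^k)^\beta f(\rho_0)+2N_2\tau^{-\beta}(\tau^k\rho_0)^\beta.
\end{equation*}

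For arbitrary $r\in(0,\rho_0]$, I would pick $k\ge 0$ with $\tau^{k+1}\rho_0<r\le \tau^k\rho_0$, apply the hypothesis one more time with $\rho=\tau^k\rho_0$ using the trivial bound $(r/\rho)^\alpha\le 1$, and plug in the previous inequality; the factor $\tau^{k\beta}\le \tau^{-\beta}(r/\rho_0)^\beta$ then produces the stated conclusion with $N_3,N_4$ depending only on $N_1,N_2,\alpha,\beta$ (and hence, harmlessly, also on $N_0$). The assumption $f(\rho_0)<N_0$ is used only to guarantee that the right-hand side of the recursion starts from a finite value; no genuine obstacle arises, since the selection of $\tau$ via $\alpha>\beta$ decouples the $N_1$-contribution from the $\rho^\beta$ forcing term and the remainder is a geometric sum. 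If anything, the only mildly delicate point is the bookkeeping when converting the discrete decay at radii $\tau^k\rho_0$ into a bound valid for every $r\in(0,\rho_0]$, which is handled cleanly by the single extra application of the hypothesis sketched above.
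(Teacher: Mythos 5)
Your argument is correct, and it is exactly the standard iteration proof of this Campanato-type lemma: the paper itself gives no proof but simply cites Lemma 2.1 in Chapter 3 of Giaquinta's book, whose proof proceeds by the same choice of $\tau$ with $N_1\tau^{\alpha-\beta}\le 1/2$, geometric iteration along $\tau^k\rho_0$, and interpolation to general $r$. Your bookkeeping (including the harmless role of $N_0$ and the final comparison $\tau^{k\beta}\le\tau^{-\beta}(r/\rho_0)^\beta$) is sound, so nothing is missing.
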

 	\begin{proof}
 		See, for example, Lemma 2.1 in Chapter 3 of \cite{MG}.
 	\end{proof}
 	
 	\section{Interior \texorpdfstring{$\varepsilon$}{}-regularity}
 	This section is devoted to the proof of Theorem \ref{Them 1}. According to  Campanato's characterization of H\"older continuity, we  only need to prove
 	\begin{align*}
 		\int_{B(x, \rho)}|u-(u)_{B(x, \rho)}|^2 \ dx\leq N \rho^{5+\alpha}
 	\end{align*}
 	for some index $\alpha\in (0,1)$.   To this end,  we decompose $u$ as $u=w+v$, where $v$ is a harmonic function. We estimate the  mean oscillation of $w$ by the $L^p$ estimates for  elliptic equations  and the uniform  decay estimates of the scale invariant quantities, which in turn is proved by an induction argument.  To obtain the above decay estimates, we need all these scale invariant quantities to be small. However,  from the assumptions in  Theorem \ref{Them 1}, we only have the smallness of $C$ and $F$.   For this, we apply the local $W^{k,p}$ estimate for linear Stokes systems  to get the smallness of $P$,  and then show the smallness of $E$ by an iteration argument based on the  fact that the  values of  $A+ E$ and $G$ in  a smaller ball can be controlled by  their values in a larger ball.
 	For the mean oscillation of $v$,   we   use the  properties of harmonic  functions. More precisely, we first obtain the smallness of $E(3/4)$ in Lemma \ref{lem 3.3} by using an iteration argument as well as the estimates in  Lemmas \ref{lem 3.1} and   \ref{lem 2}. Then, based on the above results and the condition \eqref{1.3b},  we derive the uniform decay estimates of $A+E$, $G$, and $P$ in Lemma \ref{lem 3.4}.  Finally, by the   decay estimates  of these scale invariant quantities and the $L^p$ estimate for elliptic equations,  we prove the H\"older continuity of $u$ in the ball $B_{1/2}$ by using Campanato's characterization of H\"older continuity.
 	
 	Throughout this section, we use  the pair $(u, p)$ to represent  a suitable weak solution to the incompressible Navier-Stokes equations \eqref{1.1}.
 	
 	First of all, we rewrite the equation into
 	$$
 	-\Delta u+\nabla p=f-\dv(u\otimes u).
 	$$
 	Applying the local $W^{k,p}$ estimate for linear Stokes systems (see, for instance, in \cite[Theorem 3.8]{K}), we have
 	$$
 	\|p-(p)_{B_{1/2}}\|_{L^{q/2}(B_{1/2})}\le N\|u\|^2_{L^{q}(B_1)}+N\|f\|_{L^{q/2}(B_1)}
 	+N\|u\|_{L^{1}(B_1)}.
 	$$
 	Therefore, by using \eqref{1.3b} and after a scaling, we may assume that
 	\begin{align}\label{1.3}
 		\int_{B_1}|u|^q\ dx+\int_{B_1}|p-(p)_{B_{1}}|\ dx + \int_{B_1}|f|^{2}\ dx <\varepsilon,
 	\end{align}
 	
 	In the next lemma, we show  that the values of $A+ E$ and $G$ in smaller ball can be controlled  by their values in a larger ball by the following two lemmas.
 	
 	\begin{lemma}\label{lem 3.1}  For any  $\gamma\in (0,1/2]$ and $B(x_0, \rho)\subset B_1$,  we have
 		\begin{align}\label{3.1}
 			&A(x_0, \gamma\rho)+E(x_0, \gamma\rho)\notag\\
 			&\leq N\Big[\gamma^2 A(x_0, \rho)+\gamma^{-2}A(x_0, \rho)^\frac14E(x_0, \rho)^\frac{5}{4}+\gamma^{-2}A(x_0, \rho)^\frac32\nonumber\\
 			&\quad + \gamma^{-2}    A(x_0, \rho)^\frac{9c-1}{4(1+c)}E(x_0, \rho)^\frac{3-7c}{4(1+c)}G(x_0, \rho)^\frac{4-c}{5(1+c)}\nonumber\\
 			&\quad +\gamma^{-2}A(x_0, \rho)^\frac12G(x_0, \rho)^\frac{4-c}{5(1+c)}+\gamma^{-4} F(x_0, \rho)\Big],
 		\end{align}
 		where $N=N(c)$ is a positive constant independent of $\gamma$ and $\rho$.
 	\end{lemma}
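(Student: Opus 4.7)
My plan is to apply the local energy inequality \eqref{2.1} with a suitable cutoff function, then estimate each right-hand term using H\"older's inequality, interpolation, the 5D Sobolev embedding $H^1\hookrightarrow L^{10/3}$, and the incompressibility of $u$.

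\textbf{Setup.} I will pick a cutoff $\psi\in C_0^\infty(B(x_0,\rho))$ with $\psi\equiv 1$ on $B(x_0,\gamma\rho)$ and $|\nabla^k\psi|\le N\rho^{-k}$ for $k=1,2$, which is possible because $\gamma\le 1/2$. Since $\psi\equiv 1$ on $B(x_0,\gamma\rho)$, dividing \eqref{2.1} by $2\gamma\rho$ will yield an upper bound on $E(x_0,\gamma\rho)$. For $A(x_0,\gamma\rho)$, I will split $u=(u)_{B(x_0,\rho)}+(u-(u)_{B(x_0,\rho)})$; the average piece gives the advertised $\gamma^2 A(x_0,\rho)$ term after a direct computation using $|(u)_{B(x_0,\rho)}|^2\le N\rho^{-5}\int_{B(x_0,\rho)}|u|^2\,dx$, while the oscillation piece is controlled by Poincar\'e on $B(x_0,\rho)$ and produces a contribution bounded by $N\gamma^{-3}E(x_0,\rho)$, which is absorbed into the $E$-contributions below using $\gamma\le 1/2$.

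\textbf{Estimating the four terms in \eqref{2.1}.} The term $\int|u|^2\Delta\psi\,dx$ is bounded by $N\rho A(x_0,\rho)$ via $|\Delta\psi|\le N\rho^{-2}$. For the cubic term $\int|u|^2 u\cdot\nabla\psi\,dx$, I will use incompressibility to replace $|u|^2$ by $|u|^2-|(u)_{B(x_0,\rho)}|^2$, then apply H\"older together with the interpolation $\|u\|_{L^3}^3\le N\|u\|_{L^2}^{1/2}\|u\|_{L^{10/3}}^{5/2}$ and the Sobolev bound $\|u-(u)_{B(x_0,\rho)}\|_{L^{10/3}(B(x_0,\rho))}\le N\|\nabla u\|_{L^2(B(x_0,\rho))}$; this yields the $A^{1/4}E^{5/4}$ term, and the mean value contribution produces the $A^{3/2}$ term. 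For the pressure term, incompressibility lets me replace $p$ by $p-(p)_{B(x_0,\rho)}$; then H\"older with the dual exponents $\tfrac{5(1+c)}{4-c}$ (giving $G$) and $s_2=\tfrac{5(1+c)}{1+6c}$, followed by interpolating $\|u\|_{L^{s_2}}$ between $L^2$ and $L^{10/3}$ with parameter $\theta=\tfrac{9c-1}{2(1+c)}$, produces exactly the fourth term $A^{(9c-1)/(4(1+c))}E^{(3-7c)/(4(1+c))}G^{(4-c)/(5(1+c))}$ together with the lower-order fifth term $A^{1/2}G^{(4-c)/(5(1+c))}$ coming from the mean. The forcing term $\int f\cdot u\psi\,dx$ is handled by Cauchy-Schwarz and $\varepsilon$-Young, giving the $F(x_0,\rho)$ contribution, with the smaller negative $\gamma$-powers being upgraded to $\gamma^{-4}$ using $\gamma\le 1/2$.

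\textbf{Main obstacle.} The most delicate step is the pressure interpolation: one has to identify the unique exponents $(s_2,\theta)$ for which H\"older followed by $\|u\|_{L^{s_2}}\le\|u\|_{L^2}^\theta\|u\|_{L^{10/3}}^{1-\theta}$ matches the dimensional scaling exactly and yields the prescribed powers of $A$, $E$, and $G$. The admissibility range $c\in\bigl(\tfrac{5-q}{6q-5},\tfrac14\bigr)$ is dictated precisely by the requirement that all interpolation exponents remain non-negative, the $E$-power $(3-7c)/(4(1+c))$ stays positive (so that the iteration in Lemmas \ref{lem 3.3} and \ref{lem 3.4} is possible), and everything scales consistently under the symmetry \eqref{eq10.23}.
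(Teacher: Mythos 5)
Your treatment of the cubic term, the pressure term (H\"older with the pair $\tfrac{5(1+c)}{4-c}$, $\tfrac{5(1+c)}{1+6c}$ followed by interpolation between $L^2$ and the Sobolev space), and the forcing term matches the paper. The gap is in how you generate $A(x_0,\gamma\rho)$ and in the term $\int|u|^2\Delta\psi\,dx$. With a plain cutoff ($\psi\equiv 1$ on $B(x_0,\gamma\rho)$, $|\nabla^k\psi|\le N\rho^{-k}$), dividing \eqref{2.1} by $\gamma\rho$ turns the $|u|^2\Delta\psi$ term into $N\gamma^{-1}A(x_0,\rho)$, not $\gamma^2A(x_0,\rho)$; and your splitting $u=(u)_{B(x_0,\rho)}+(u-(u)_{B(x_0,\rho)})$ with Poincar\'e gives $A(x_0,\gamma\rho)\le N\gamma^2A(x_0,\rho)+N\gamma^{-3}E(x_0,\rho)$ (Sobolev--Poincar\'e improves $\gamma^{-3}$ to $\gamma^{-1}$, no better). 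Neither $\gamma^{-1}A(\rho)$ nor $\gamma^{-3}E(\rho)$ appears on the right-hand side of \eqref{3.1}, and they cannot be ``absorbed into the $E$-contributions'': those contributions ($A^{1/4}E^{5/4}$, $A^{\frac{9c-1}{4(1+c)}}E^{\frac{3-7c}{4(1+c)}}G^{\frac{4-c}{5(1+c)}}$) are superlinear in the small quantities, so in the regime $A,E,G\to 0$ a term linear in $E(\rho)$ or $A(\rho)$ dominates them. This is not cosmetic: the whole point of \eqref{3.1} is that the only degree-one term carries the positive factor $\gamma^2$, while every term with a negative power of $\gamma$ has total degree strictly greater than one in the scale-invariant quantities; in the induction of Lemma \ref{lem 3.4} a leftover $\gamma^{-1}A(\rho_k)$ or $\gamma^{-3}E(\rho_k)$ equals $\rho_k^{\alpha-\beta}$ or worse, which can never be dominated by $\rho_k^{\alpha(1+\beta)}$, so the decay iteration breaks down.

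The paper avoids this by the CKN-type choice of test function: $\psi=\Gamma\phi$ with $\Gamma(x)=(\gamma^2+|x-x_0|^2)^{-3/2}$ and $\phi$ a fixed cutoff at unit scale. Since $\Delta\Gamma<0$ everywhere and $\Delta\Gamma\le -N\gamma^{-5}$ on $B(x_0,\gamma)$, the term $-\int|u|^2\Delta\psi$ sits on the \emph{good} side of the energy inequality and produces $\gamma^{-5}\int_{B(x_0,\gamma)}|u|^2$, i.e.\ $A(x_0,\gamma\rho)$, for free; at the same time $|\Gamma\Delta\phi+2\nabla\Gamma\cdot\nabla\phi|\le N$ (the singular weight is differentiated only where $\phi$ is differentiated, far from $x_0$), so the surviving linear contribution is only $N\int_{B(x_0,\rho)}|u|^2$, which after the normalization becomes exactly $\gamma^2A(x_0,\rho)$. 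Without this weighted test function (or an equivalent device), your argument proves a weaker inequality that does not yield Lemma \ref{lem 3.1} as stated and does not support the subsequent iteration.
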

 	\begin{proof}
 		The proof is more or less standard. For the sake of completeness, we give the detail of proof. By the  scale invariant property, we may assume $\rho=1$.
 		
 		For the test function
 		\begin{align*}
 			\Gamma(x)=  \big(\gamma^2+|x-x_0 |^2\big)^{-\frac32},
 		\end{align*}
 		one has
 		\begin{align*}
 			\Delta \Gamma (x) <0 \quad \mathrm{in} \ \mathbb{R}^5, \quad \Delta\Gamma (x) \leq -N\gamma ^{-5}\quad \mathrm{in} \ B(x_0, \gamma).
 		\end{align*}

 		We choose a suitable smooth cutoff function  $\phi (x)\in C_0^\infty(B(x_0, 1))$ which  satisfies
 		\begin{align*}
 			0\leq \phi(x)\leq 1 \quad \mathrm{in}\ B(x_0, 1), \quad \phi (x)=1\quad  \mathrm{in}\ B(x_0, 1/2),\\
 			|\nabla \phi (x)|\leq N, \quad |\nabla^2 \phi (x)|\leq N \quad\mathrm {in} \ B(x_0, 1).\qquad
 		\end{align*}
 		A simple calculation yields
 		\begin{align}\label{3.2}
 			\Gamma (x)\phi (x)\geq N\gamma^{-3},  \quad \forall \ x\in B( x_0, \gamma),
 		\end{align}
 		\begin{align}\label{3.3}
 			|\Gamma (x)\phi (x)|\leq N\gamma^{-3},\quad \forall \ x\in B( x_0, 1),
 		\end{align}
 		\begin{align}\label{3.4}
 			|\nabla\Gamma(x)\phi (x)+\Gamma(x)\nabla \phi(x)|\leq N \gamma^{-4},\quad \forall\  x\in B(x_0, 1)
 		\end{align}
 		and
 		\begin{align}\label{3.5}
 			|\Gamma(x)\Delta\phi(x)+2\nabla\Gamma(x)\cdot\nabla \phi(x)|\leq N, \quad \forall\  x\in B(x_0, 1).
 		\end{align}

 		Taking $\psi=\Gamma(x) \phi(x) $ in the energy inequality  \eqref{2.1},   we have
 		\begin{align*}
 			- \int_{B(x_0, \gamma)} &|u|^2 \Delta\Gamma(x)  \phi(x)\ dx +2\int_{B(x_0, \gamma)}|\nabla u|^2\Gamma(x)\phi (x) \ dx\\
 			\leq & \int_{B(x_0, 1)} |u|^2|\Gamma(x)\Delta\phi (x) +2\nabla\Gamma(x)\cdot\nabla \phi(x) |\ dx\\
 			&+\int_{B(x_0, 1)}\Big(|u|^2+|p-(p)_{B(x_0, 1)}|\Big)|u| \cdot|\nabla\Gamma(x)\phi (x)+\Gamma(x)\nabla\phi(x)|\ dx\\
 			& +2\int_{B(x_0, 1)}|f\cdot u ||\Gamma(x) \phi (x)|\  dx.
 		\end{align*}
 		By the properties \eqref{3.2}-\eqref{3.5}   and Young's inequality,  one obtains
 		\begin{align*}
 			\gamma^{-5}\int_{B(x_0,\gamma)}& |  u|^2 \  dx+\gamma^{-3}\int_{B(x_0, \gamma)} |\nabla u|^2  \ dx\\
 			\leq &
 			N\Big[\int_{B(x_0, 1)}|u|^2 \ dx+\gamma^{-4}\int_{B(x_0, 1)}\Big(|u|^2+|p-(p)_{B(x_0, 1)}|\Big)|u|\ dx\\ &+\gamma^{-6}\int_{B(x_0, 1)}f^2\ dx \Big],
 		\end{align*}
 		which  yields
 		\begin{align}\label{3.6}
 			& A(x_0,\gamma)+ E(x_0, \gamma)\leq N\Big[\gamma^2A(x_0, 1)\nonumber\\
 			&\quad +\gamma^{-2}\int_{B(x_0, 1)}\Big(|u|^2+|p-(p)_{ B(x_0,1)}|\Big)|u|\ dx+\gamma^{-4} F(x_0, 1)\Big].
 		\end{align}

 		For the second term on the right-hand side of \eqref{3.6},  by H\"older's inequality  and the Sobolev embedding inequality, we have
 		\begin{align*}
 			\int_{B(x_0, 1)}|u|^3\ dx&\leq \Big(\int_{B(x_0, 1)}|u|^\frac{10}{3} \ dx\Big)^\frac34\Big(\int_{B(x_0, 1)}|u|^2\ dx\Big)^\frac14\\
 			&\leq N \Big(\int_{B(x_0, 1)} |\nabla u|^2\ dx+\int_{B(x_0, 1)} |u|^2 \  dx\Big)^\frac54\Big(\int_{B(x_0, 1)}|u|^2\ dx\Big)^\frac{1}{4}\\
 			&\leq N \Big(A(x_0, 1)^\frac{1}{4}E(x_0, 1)^\frac{5}{4}+A(x_0, 1)^\frac{3}{2}\Big)
 		\end{align*}
 		and
 		\begin{align*}
 			&\int_{B(x_0, 1)}|p-(p)_{B(x_0, 1)}|| u |\ dx\\
 			&\leq \Big(\int_{B(x_0, 1)}|p-(p)_{B(x_0,1)}|^\frac{5(1+c)}{4-c} \ dx\Big)^\frac{4-c}{5(1+c)}\Big(
 			\int_{B(x_0, 1)}|u|^\frac{5(1+c)}{1+6c}\ dx\Big )^\frac{1+6c}{5(1+c)}\\
 			&\leq N\Big(\int_{B(x_0, 1)}|p-(p)_{B(x_0, 1)}|^\frac{5(1+c)}{4-c} \ dx\Big)^\frac{4-c}{5(1+c)}\Big(
 			\int_{B(x_0, 1)}|u|^2\ dx\Big )^\frac{9c-1}{4(1+c)}\\
 			&\quad \cdot\Big(
 			\int_{B(x_0, 1)}|\nabla u|^2\ dx+\int_{B(x_0, 1)}|u|^2\ dx\Big )^\frac{3-7c}{4(1+c)}\\
 			&\leq N\Big(A(x_0, 1)^\frac{9c-1}{4(1+c)}E(x_0, 1)^\frac{3-7c}{4(1+c)}G(x_0, 1)^\frac{4-c}{5(1+c)}+A(x_0, 1)^\frac12G(x_0, 1)^\frac{4-c}{5(1+c)}\Big),
 		\end{align*}
 		where $N=N(c)$ is some positive constant.
 		Plugging above two inequalities into  \eqref{3.6},  the lemma is proved.
 	\end{proof}
 	
 	\begin{lemma}\label{lem 2} For any    $\gamma\in (0,3/5]$ and $B(x_0, \rho)\subset B_1$,  we have
 		\begin{align}\label{3.7}
 			G(x_0, \gamma\rho)\leq& N\gamma^{-\frac{10-15c}{4-c}} \Big(\gamma^{\frac{25}{4-c}}P(x_0, \rho)^\frac{5(1+c)}{4-c}+ A(x_0, \rho)^{\frac{5(1-4c)}{2(4-c)}}E(x_0, \rho)^\frac{5(1+6c)}{2(4-c)} \nonumber\\
 			&+F(x_0, \rho)^{\frac{5(1+c)}{2(4-c)}}\Big),
 		\end{align}
 		where $N=N(c)$ is a positive constant independent of $\gamma$ and $\rho$.
 	\end{lemma}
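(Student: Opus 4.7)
My plan is to decompose the pressure by means of the Newton potential. Taking the divergence of the momentum equation in \eqref{1.1} and using $\dv u = 0$ yields $\Delta p = \dv f - \dv\dv(u\otimes u)$. A crucial observation is that, because $\dv u = 0$, one has the identity $\dv\dv(u\otimes u) = \dv\dv(v\otimes v)$ with $v := u-(u)_{B(x_0,\rho)}$: expanding $v\otimes v$ produces cross terms $v\otimes (u)_{B(x_0,\rho)}$ and $(u)_{B(x_0,\rho)}\otimes v$ that vanish under $\partial_i\partial_j$ because one derivative falls on $\dv v = \dv u = 0$, while the constant term $(u)_{B(x_0,\rho)}\otimes (u)_{B(x_0,\rho)}$ is killed by the second derivative. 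I would then fix a smooth cutoff $\phi\in C_c^\infty(B(x_0,\rho))$ with $\phi\equiv 1$ on $B(x_0,3\rho/4)$, set
\begin{align*}
 p_2 := \Gamma *\bigl[\dv(\phi f) - \dv\dv(\phi\, v\otimes v)\bigr],\q p_1 := p - p_2,
\end{align*}
where $\Gamma$ is the Newton potential on $\bR^5$; by construction $p_1$ is harmonic on $B(x_0,3\rho/4)$, which contains $B(x_0,\gamma\rho)$ since $\gamma\leq 3/5$.

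For the nonhomogeneous part $p_2$ I would apply Calder\'on--Zygmund boundedness of $\partial_i\partial_j\Gamma *$ on $L^{q^*}(\bR^5)$, where $q^* := 5(1+c)/(4-c)$, to control the $v\otimes v$ piece by $\|v\|_{L^{2q^*}(B(x_0,\rho))}^2$. Interpolating with parameter $\theta = (1-4c)/(2(1+c))\in (0,1/2)$ between $\|v\|_{L^2}\leq \|u\|_{L^2}$ and the scale-invariant Poincar\'e--Sobolev inequality $\|v\|_{L^{10/3}(B(x_0,\rho))}\leq N\|\nabla u\|_{L^2(B(x_0,\rho))}$ (which holds in $\bR^5$ since $10/3$ is the Sobolev exponent of $H^1$) gives, after collecting the $\rho$-powers, precisely the term $A(x_0,\rho)^{5(1-4c)/(2(4-c))}E(x_0,\rho)^{5(1+6c)/(2(4-c))}$. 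For the forcing piece, the Hardy--Littlewood--Sobolev inequality for the Riesz potential $\nabla\Gamma *$ yields $\|\Gamma * \dv(\phi f)\|_{L^{10/3}(\bR^5)}\leq N\|f\|_{L^2(B(x_0,\rho))}$, and H\"older's inequality on $B(x_0,\gamma\rho)$ (valid since $q^*<10/3$) then produces the term $F(x_0,\rho)^{5(1+c)/(2(4-c))}$.

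For the harmonic part $p_1$ I would use the standard interior gradient bound $\|\nabla p_1\|_{L^\infty(B(x_0,\gamma\rho))}\leq N\rho^{-6}\|p_1 - (p)_{B(x_0,\rho)}\|_{L^1(B(x_0,\rho))}$ together with the mean-oscillation estimate $\int_{B(x_0,\gamma\rho)}|p_1-(p_1)_{B(x_0,\gamma\rho)}|^{q^*}\,dx\leq N(\gamma\rho)^{5+q^*}\|\nabla p_1\|_{L^\infty}^{q^*}$. Since $p_1 = p - p_2$ and $\|p - (p)_{B(x_0,\rho)}\|_{L^1(B(x_0,\rho))} = \rho^3 P(x_0,\rho)$, the $p$-part produces exactly the first term $\gamma^{25/(4-c)}P(x_0,\rho)^{5(1+c)/(4-c)}$ after simplification, while the residual $\|p_2\|_{L^1(B(x_0,\rho))}$-contribution, controlled by $\|p_2\|_{L^{q^*}}$ via H\"older, is dominated by the direct $p_2$-bound because it carries an extra factor $\gamma^{3q^*}$ which is strictly smaller than the overall $\gamma^{-(10-15c)/(4-c)}$ prefactor.

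The main obstacle is the invariance identity $\dv\dv(u\otimes u) = \dv\dv(v\otimes v)$: without subtracting the mean, applying Calder\'on--Zygmund directly to $\phi u\otimes u$ would introduce a stray term of the form $A(x_0,\rho)^{5(1+c)/(4-c)}$ that is absent from the stated inequality. The rest of the argument is careful bookkeeping of scaling exponents, arranged so that all $\rho$-powers cancel in each of the three terms, leaving only the $\gamma$-factor together with the scale-invariant quantities $P$, $A$, $E$, and $F$.
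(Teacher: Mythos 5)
Your proposal is correct and follows essentially the same route as the paper: take the divergence of the momentum equation, subtract the mean from $u$ using $\nabla\cdot u=0$, split $p$ into the Newtonian potential of the localized right-hand side plus a harmonic remainder, estimate the potential part by Calder\'on--Zygmund together with Sobolev--Poincar\'e interpolation (giving the $A$--$E$ and $F$ terms), and estimate the harmonic part by interior gradient bounds in $L^1$ (giving the $\gamma^{25/(4-c)}P^{5(1+c)/(4-c)}$ term). The only differences are cosmetic bookkeeping (e.g.\ your force term via $L^2\to L^{10/3}$ plus H\"older versus the paper's $L^2\to L^{1+c}$ plus Sobolev, and the exact positive power of $\gamma$ on the residual), none of which affects the argument.
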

 	\begin{proof}
 		By the   scale invariant property, we assume $\rho=1$.
 		Taking divergence on  both sides of  the first equation  in
 		\eqref{1.1},  since  $u$ is divergence free,  we have
 		\begin{align}\label{3.8}
 			\Delta p&= -D_{ij}(u_iu_j)+\nabla \cdot f\nonumber\\
 			&=-D_{ij}\big[\big(u_i-(u_i)_{B(x_0, 1)}\big)\big(u_j-(u_j)_{B(x_0,1)}\big)\big]+\nabla \cdot f.
 		\end{align}
 		
 		Let $\eta(x)$ be a suitable smooth cutoff function in $B(x_0, 1)$ satisfying $0\leq \eta\leq 1$ and $\eta=1$ in $\overline{B(x_0, 2/3)}$.
 		We  decompose the pressure term as follows
 		\begin{align*}
 			p=\tilde p+\tilde h,
 		\end{align*}
 		where $ \tilde p $ is the Newtonian potential of
 		\begin{align*}
 			-D_{ij}\big[(u^i-(u^i)_{B(x_0, 1)})(u^j-(u^j)_{B(x_0, 1)})\eta(x-x_0)\big]+\nabla \cdot\big [f\eta(x-x_0)\big].
 		\end{align*}
 		By applying the  Calder\'on-Zygmund estimate, the   Sobolev embedding inequality, and  H\"older's inequality,  one has
 		\begin{align}\label{3.9}
 			&\int_{B(x_0, 1)}| \tilde p |^\frac{5(1+c)}{4-c}\ dx\nonumber\\
 			&\leq N\Big[ \int_{B(x_0, 1)}|u-(u)_{B(x_0,1)}|^\frac{10(1+c)}{4-c}\ dx+ \int_{B(x_0, 1)}\big|\Delta^{-1}\nabla\cdot [f\eta(x-x_0 )]\big|^{\frac{5(1+c)}{4-c}}\ dx\Big] \nonumber\\
 			&\leq N\Big[\Big(\int_{B(x_0, 1)}|u|^2\ dx\Big)^\frac{5(1-4c)}{2(4-c)} \Big(\int_{B(x_0, 1)}|\nabla u|^2\ dx\Big)^\frac{5(1+6c)}{2(4-c)}\notag\\
 			&\qquad +\Big(\int_{B(x_0, 1)}|f|^{1+c} \ dx\Big)^{\frac{5}{4-c}}\Big]\nonumber\\
 			&\leq N\Big[\Big(\int_{B(x_0, 1)}|u|^2\ dx\Big)^\frac{5(1-4c)}{2(4-c)}\Big(\int_{B(x_0, 1)}|\nabla u|^2\ dx\Big)^\frac{5(1+6c)}{2(4-c)}\nonumber\\
 			&\qquad +\Big(\int_{B(x_0, 1)}|f|^{2}\ dx\Big)^{\frac{5(1+c)}{2(4-c)}}\ \Big],
 		\end{align}
 		where $N=N(c)$.
 		
 		Since $ \tilde h $ is a harmonic function in $B(x_0,  2/3)$,   any Sobolev norm of $ \tilde h $ in  $B(x_0, \gamma)$ can be controlled by the $L^p$ norm of it in $B(x_0, 2/3)$, where $p\in [1,+\infty]$. Thus, we have
 		\begin{align}\label{3.10}
 			&\int_{B(x_0,\gamma)} | \tilde h-( \tilde h )_{B(x_0, \gamma)}|^\frac{5(1+c)}{4-c}\ dx\nonumber\\
 			&\leq N\gamma^{\frac{5(1+c)}{4-c}}\int_{B(x_0, \gamma)}|\nabla  \tilde h |^\frac{5(1+c)}{4-c} \ dx
 			\leq N\gamma^{\frac{25}{4-c}}\sup_{B(x_0,\gamma)}|\nabla  \tilde h  |^\frac{5(1+c)}{4-c}\nonumber\\
 			&\leq N\gamma^{\frac{25}{4-c}} \Big(\int_{B(x_0,  2/3)}| \tilde h -(p)_{B(x_0, 1)}|\ dx\Big)^\frac{5(1+c)}{4-c}\nonumber\\
 			&\leq N\gamma^{\frac{25}{4-c}} \Big[ \Big(\int_{B(x_0, 1)}|p-(p)_{B(x_0, 1)}|\ dx\Big)^\frac{5(1+c)}{4-c}+ \int_{B(x_0,1)}| \tilde p |^\frac{5(1+c)}{4-c} \ dx\Big].
 		\end{align}
 		The combination of \eqref{3.9} and \eqref{3.10} yields
 		\begin{align*}
 			&\int_{B(x_0,\gamma)}|p-(p)_{B(x_0, \gamma)}|^\frac{5(1+c)}{4-c}  \ dx\\
 			&\leq  N\Big(\int_{B(x_0,\gamma)}| \tilde h -( \tilde h )_{B(x_0, \gamma)}|^\frac{5(1+c)}{4-c} \ dx+\int_{B(x_0,\gamma)}| \tilde p |^\frac{5(1+c)}{4-c} \ dx\Big)\\
 			&\leq N\Big[\gamma^{\frac{25}{4-c}} \Big(\int_{B(x_0, 1)}|p-(p)_{B(x_0, 1)}|\ dx\Big)^\frac{5(1+c)}{4-c}+(1+\gamma^\frac{25}{4-c}) \int_{B(x_0, 1)}| \tilde p |^{\frac{5(1+c)}{4-c}} \ dx \Big]\\
 			&\leq N\Big[\gamma^{\frac{25}{4-c}} \Big(\int_{B(x_0, 1)}|p-(p)_{B(x_0,1)}|\ dx\Big)^\frac{5(1+c)}{4-c}+\Big(\int_{B(x_0, 1)}|u|^{2} \ dx \Big)^{\frac{5(1-4c)}{2(4-c)}}\\
 			&\quad \cdot \Big(\int_{B(x_0, 1)}|\nabla u|^2 \ dx \Big)^\frac{5(1+6c)}{2(4-c)} +\Big(\int_{B(x_0, 1)}|f|^{2}\ dx\Big)^{\frac{5(1+c)}{2(4-c)}}  \Big],
 		\end{align*}
 		where  $N=N(c)$. The conclusion of Lemma \ref{lem 2} follows immediately.
 	\end{proof}

 	The following lemma  shows that   $E(3/4)$ can be  sufficiently  small  under the conditions of Theorem \ref{Them 1}.  This lemma is  needed for us to derive the decay estimates of   scale invariant quantities with respect to the radius.
 	
 	\begin{lemma}\label{lem 3.3}
 		Let $\Omega=B_1$  and $(u,p)$ be a suitable weak solution to \eqref{1.1}  satisfying the condition of Theorem \ref{Them 1} Then  we have
 		\begin{align*}
 			E\big(3/4\big)\leq   N\varepsilon^\beta,
 		\end{align*}
 		where $ N$ and $\beta$ are positive  constants which depend only on $c$ and $q$.
 	\end{lemma}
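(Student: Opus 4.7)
The plan is to combine Lemmas \ref{lem 3.1} and \ref{lem 2} in an iteration scheme to obtain the smallness of the combined invariant $\phi(r):=A(x_0,r)+E(x_0,r)+G(x_0,r)^{(4-c)/(5(1+c))}$ on a small ball centered at any $x_0\in B_{3/4}$, and then to deduce $E(3/4)\leq N\varepsilon^\beta$ via a covering argument.

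First, from \eqref{1.3} and Hölder's inequality, for every ball $B(x_0,r)\subset B_1$ with $r$ bounded below by a fixed constant $r_0>0$, one has $A(x_0,r)\leq N\varepsilon^{2/q}$ and $F(x_0,r)\leq N\varepsilon$. For the pressure, the inequality $|(p)_{B(x_0,r)}-(p)_{B_1}|\leq N r^{-5}\int_{B_1}|p-(p)_{B_1}|\leq N r^{-5}\varepsilon$, together with the triangle inequality applied to $P(x_0,r)=r^{-3}\int_{B(x_0,r)}|p-(p)_{B(x_0,r)}|$, yields $P(x_0,r)\leq N\varepsilon$. Thus $A$, $P$, and $F$ all carry a positive power of $\varepsilon$ on such balls.

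Next, I would fix $x_0\in B_{3/4}$, $\rho_0\leq 1/4$ with $B(x_0,\rho_0)\subset B_1$, choose $c\in\bigl(\tfrac{5-q}{6q-5},\tfrac{1}{4}\bigr)$, and iterate Lemmas \ref{lem 3.1} and \ref{lem 2} with a fixed small $\gamma\in(0,1/2]$. Inserting the initial smallness of $A,P,F$ into both lemmas and bounding $E$ and $G^{(4-c)/(5(1+c))}$ by $\phi$, the nonlinear terms in Lemma \ref{lem 3.1} (such as $A^{1/4}E^{5/4}$ and the mixed $A^{(9c-1)/(4(1+c))}E^{(3-7c)/(4(1+c))}G^{(4-c)/(5(1+c))}$) and in Lemma \ref{lem 2} (notably $A^{5(1-4c)/(2(4-c))}E^{5(1+6c)/(2(4-c))}$) take the form $\varepsilon^{\text{positive}}\,\phi(\rho)^\sigma$ with various $\sigma\geq 1$. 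For those with $\sigma>1$, Young's inequality absorbs them into a small multiple of $\phi$, with absorption valid as long as $\phi(\rho)$ stays below an $\varepsilon$-dependent threshold; since $u\in H^1(B_1)$ and $p\in L^1(B_1)$ ensure $\phi(\rho_0)<\infty$, the threshold is met once $\varepsilon$ is sufficiently small. The resulting recursion
\[
\phi(\gamma\rho) \leq \tfrac{1}{2}\,\phi(\rho) + N\varepsilon^{\alpha_0}
\]
iterates to $\phi(x_0,\gamma^k\rho_0)\leq 2N\varepsilon^{\alpha_0}+2^{-k}\phi(\rho_0)$, giving $\phi(x_0,\gamma^k\rho_0)\leq N\varepsilon^\beta$ for some $k$ large and some $\beta>0$ depending only on $c$ and $q$. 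Translation invariance of the constants in Lemmas \ref{lem 3.1} and \ref{lem 2} makes this bound uniform in $x_0\in B_{3/4}$.

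Finally, covering $B_{3/4}$ by finitely many balls $B(x_i,\gamma^k\rho_0)$ with centers $x_i\in B_{3/4}$ and summing,
\[
\tfrac{3}{4}\,E(3/4) = \int_{B_{3/4}}|\nabla u|^2 \leq \sum_i\int_{B(x_i,\gamma^k\rho_0)}|\nabla u|^2 = \sum_i \gamma^k\rho_0\,E(x_i,\gamma^k\rho_0)\leq N\varepsilon^\beta,
\]
which gives the desired $E(3/4)\leq N\varepsilon^\beta$. The principal obstacle is the iteration step: one must carefully verify that each super-linear $E$- or $G$-term in Lemmas \ref{lem 3.1} and \ref{lem 2} is multiplied by a strictly positive power of $A$ (or $P$, or $F$), so that Young's absorption produces a genuine positive $\varepsilon$-exponent at each stage. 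This requires a judicious choice of $c$ in its admissible range and, crucially, relies on the strict inequality $q>5/2$, consistent with the remark in the introduction that the result is unknown for $q=5/2$.
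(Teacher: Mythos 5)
Your scheme has a genuine gap at the absorption step, and it is precisely the difficulty the paper's proof is built to avoid. The hypotheses give smallness only of $C$, $F$ and (after the normalization \eqref{1.3}) of the $L^1$ oscillation of $p$; they give no smallness, nor even an $\varepsilon$-quantified bound, for $E(\rho_0)$ or $G(\rho_0)$. In Lemma \ref{lem 3.1} the energy enters superlinearly (e.g.\ the term $\gamma^{-2}A^{1/4}E^{5/4}$), so to obtain your recursion $\phi(\gamma\rho)\le \tfrac12\phi(\rho)+N\varepsilon^{\alpha_0}$ you must absorb terms of the form $\varepsilon^{a}\phi^{\sigma}$ with $\sigma>1$, which works only when $\phi(\rho)$ lies below a threshold of size $\varepsilon^{-\text{(positive power)}}$. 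You secure this by choosing $\varepsilon$ small after noting $\phi(\rho_0)<\infty$, but then $\varepsilon$ (hence $N$ and $\beta$) depends on $\|\nabla u\|_{L^2(B_1)}$ and $\|p\|_{L^1(B_1)}$ of the particular solution, whereas Lemma \ref{lem 3.3} and Theorem \ref{Them 1} require constants depending only on $c$ and $q$; this is circular with respect to the statement being proved. The paper's proof needs no threshold: it iterates over the expanding balls $\rho_k=1-2^{-(k+1)}$, all comparable to $1$, so that $A$, $C$, $P$, $F$ stay of size a positive power of $\varepsilon$ at every step, and it interpolates the cubic term against $\|u\|_{L^q}$ (with $a\in(\tfrac{q}{5q-10},1)$, using $q>5/2$) and the pressure term against the $W^{1,1+c}$ estimate for \eqref{3.8}, so that $E$ appears on the right only with exponents strictly less than one, each multiplied by $\varepsilon^{\text{positive}}$; Young's inequality then yields $E(\rho_k)\le\delta^2E(\rho_{k+2})+2^{4k}N\varepsilon^{\beta}$ unconditionally, and a $\delta^k$-weighted summation closes the argument.

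Two further quantitative defects. First, your a priori bounds $A(x_0,r)\le N\varepsilon^{2/q}$ and $P(x_0,r)\le N\varepsilon$ hold only for $r\ge r_0$; at the radii $\gamma^k\rho_0$ of your shrinking iteration they degrade like negative powers of $r$, so "inserting the initial smallness of $A,P,F$ into both lemmas" at every scale is unjustified, and without it the superlinear terms carry no $\varepsilon$-prefactor at all unless smallness has already been propagated through $\phi$ — which is exactly the missing base case. Second, the final covering does not give $E(3/4)\le N\varepsilon^\beta$: covering $B_{3/4}$ by balls of radius $r=\gamma^{k}\rho_0$ requires about $r^{-5}$ balls, each contributing $\int_{B(x_i,r)}|\nabla u|^2=rE(x_i,r)\le rN\varepsilon^\beta$, so the sum is only $\le Nr^{-4}\varepsilon^\beta$; since your $k$ must grow with $\phi(\rho_0)$, this factor is again solution-dependent and the smallness is destroyed.
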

 	\begin{proof}
 		Let  $\rho_k=1-2^{-(k+1)}$ and $B_k=B_{\rho_k}$, where   $k$ is any positive integer. We choose  cutoff functions $\psi_k$  which satisfy
 		\begin{align*}
 			\mathrm{supp} \ \psi_k\subset B_{k+1},\quad \psi_k=1\quad \mathrm{in}\ B_k,\qquad\nonumber\\
 			|D\psi_k|\leq N 2^k,\quad | D^2\psi_k| \leq N 2^{2k}\quad \mathrm{in}\ B_{k+1}.
 		\end{align*}
 		By the energy inequality \eqref{2.1}, we have  \begin{align}\label{3.11}
 			&2\int_{B_k}|\nabla u|^2  \ dx\notag\\
 			&\leq N \int_{B_{k+1}}\Big(2^k|u|^3+2^{k+1}|p-(p)_{B_{k+1}}||u|  +2^{2k}|u|^2+2|f||u | \Big)\ dx.
 		\end{align}
 		Due to  the  condition \eqref{1.3} and H\"older's inequality, we have
 		\begin{align}\label{AC}
 			A(\rho_{k+1})\leq NC(\rho_{k+1})^\frac{2}{q}\leq N\varepsilon^\frac{2}{q}.
 		\end{align}

 		Next we use \eqref{1.3} and \eqref{AC} to estimate each term on the right-hand side of \eqref{3.11}.

 		For the first term on the right-hand side of \eqref{3.11},  by  H\"older's inequality,  the  Sobolev embedding inequality and \eqref{AC},  one has
 		\begin{align}\label{3.12}
 			&\int_{B_{k+1}}|u|^3 \ dx=\int_{B_{k+1}}|u|^a|u|^{3-a}\ dx\nonumber\\
 			&\leq \Big(\int_{B_{k+1}}|u|^q \ dx\Big)^\frac{a}{q}\Big(\int_{B_{k+1}}|u|^\frac{(3-a)q}{q-a} \ dx\Big)^\frac{q-a}{q}\nonumber\\
 			&\leq N \Big(\int_{B_{k+1}}|u|^q \ dx\Big)^\frac{a}{q}\Big(\int_{B_{k+1}}|u|^2 \ dx\Big)^\frac{q+3aq-10a}{4q} \Big(\int_{B_{k+1}}|\nabla u|^2 \ dx\nonumber\\
 			&\quad + \rho_{k+1}^{-2}\int_{B_{k+1}}|u|^2 \ dx\Big)^\frac{5(q-aq+2a)}{4q}\nonumber\\
 			&  \leq N\rho_{k+1}^2\Big( A(\rho_{k+1})^{\frac{q+3aq-10a}{4q}}C(\rho_{k+1})^{\frac{a}{q}}E(\rho_{k+1}) ^{\frac{5(q-aq+2a)}{4q}}\nonumber\\
 			&\quad + A(\rho_{k+1})^\frac{3-a}{2}C(\rho_{k+1})^\frac{a}{q}\Big)\notag\\
 			&\leq N\Big(\varepsilon^\frac{q+5aq-10a}{2q^2}E(\rho_{k+1})^\frac{5(q-aq+2a)}{4q}+\varepsilon^\frac{3}{q}\Big),
 		\end{align}
 		where  $N=N(a, q)$ is some positive  constant. Here, we take the constant $a\in (0, 1)$ to make sure that all the  exponents  in the above inequality are positive.

 		Similarly,  for the second term on the right-hand side of \eqref{3.11}, it   holds that
 		\begin{align}\label{3.13}
 			&\int_{B_{k+1}} |p-(p)_{B_{k+1}}|  |u| \ dx\nonumber\\
 			& \leq \Big(\int_{B_{k+1}}|p-(p)_{B_{k+1}}|^\frac{5(1+c)}{4-c} \ dx\Big)^\frac{4-c}{5(1+c)}\Big(
 			\int_{B_{k+1}}|u|^\frac{5(1+c)}{1+6c}\ dx\Big )^\frac{1+6c}{5(1+c)}\nonumber\\
 			&\leq \Big(\int_{B_{k+1}} |\nabla p  |^{1+c} \ dx\Big)^\frac{1}{1+c}\Big(\int_{B_{k+1}}|u|^2 \ dx\Big)^\frac{q+6qc-5-5c}{5(1+c)(q-2)}  \Big(\int_{B_{k+1}}|u|^q \ dx\Big)^\frac{3-7c}{5(1+c)(q-2)},
 		\end{align}
 		where the exponents $\frac{q+6qc-5-5c}{5(1+c)(q-2)}$ and  $\frac{3-7c}{5(1
 			+c)(q-2)}$ are positive due to the fact $  c\in \Big( \frac{5-q}{6q-5}, \frac14\Big)$.

 		For the first  term on the right-hand side of    \eqref{3.13}, by the $W^{1, p}$ estimate for elliptic equation \eqref{3.8}, we have
 		\begin{align}\label{3.14}
 			\Big(\int_{B_{k+1}} |\nabla p  |^{1+c} \ dx\Big)^\frac{1}{1+c}
 			\leq& N\Big[\Big(\int_{B_{k+2}}|f|^{1+c}\ dx\Big)^{\frac{1}{1+c}}+\Big(\int_{B_{k+2}}|u\cdot\nabla u|^{1+c}\ dx\Big)^{\frac{1}{1+c}}\nonumber\\
 			&+ (\rho_{k+2}-\rho_{k+1})^{-\frac{1+6c}{1+c}} \int_{B_{k+2}}|p-(p)_{B_{k+2}}| \  dx  \Big],
 		\end{align}
 		where $N=N(c)>0$ is some constant.
 		
 		For the second term on the right-hand side of the above inequality, by  H\"older's inequality and the Sobolev embedding inequality,  one obtains
 		\begin{align}\label{3.15}
 			&\| u\cdot\nabla u\|_{L^{1+c}(B_{k+2})}\nonumber\\
 			&\leq \|u\|_{L^\frac{2(1+c)}{1-c}(B_{k+2})}\|\nabla u\|_{L^2(B_{k+2})}\nonumber\\
 			&\leq  N\Big(\|\nabla u\|_{L^2(B_{k+2})}^\frac{5c}{1+c}+\rho_{k+2}^{-\frac{5c}{1+c}}\|  u\|_{L^2(B_{k+2})}^\frac{5c}{1+c}\Big)\|  u\|_{L^2(B_{k+2})}^\frac{1-4c}{1+c}\|\nabla u\|_{L^2(B_{k+2})}\nonumber\\
 			&\leq  N \Big(\|u\|_{L^2(B_{k+2})}^{\frac{1-4c}{1+c}}\|\nabla u\|_{L^2(B_{k+2})}^{\frac{1
 					+6c}{1+c}}+\rho_{k+2}^{-\frac{5c}{1+c}}\|u\|_{L^2(B_{k+2})}\|\nabla u\|_{L^2(B_{k+2})}\Big),
 		\end{align}
 		where $N=N(c)$ is some  positive constant.

 		The combination of \eqref{3.14} and \eqref{3.15} yields
 		\begin{align*}
 			&\Big( \int_{B_{k+1}} |\nabla p  |^{1+c} \ dx\Big)^\frac{1}{1+c}\\
 			&\leq  N\Big[\rho_{k+2}
 			^\frac{5(1-c)}{2(1+c)}\Big(\int_{B_{k+2}}|f|^2\ dx\Big)^\frac12+\Big(\int_{B_{k+2}}|u|^2\ dx\Big)^\frac{1-4c}{2(1+c)}\Big(\int_{B_{k+2}}|\nabla u|^2\ dx\Big)^\frac{1+6c}{2(1+c)}\\
 			&\quad +\rho_{k+2}^{-\frac{5c}{1+c}}\Big(\int_{B_{k+2}}|u|^2 \ dx\Big)^\frac12\Big(\int_{B_{k+2}}|\nabla u|^2 \ dx\Big)^\frac12\\
 			&\quad+2^{\frac{ (k+3)(1+6c)}{1+c}} \int_{B_{k+2}}|p-(p)_{B_{k+2}}| \ dx \Big]\\
 			&\leq N\Big[\rho_{k+2}^\frac
 			{2-3c}{1+c}F(\rho_{k+2})^\frac12+\rho_{k+2}^{\frac{2-3c}{1+c}}
 			A(\rho_{k+2})^\frac{1-4c}{2(1+c)}E(\rho_{k+2})^\frac{1+6c}{2(1+c)}
 			\\
 			&\quad +\rho_{k+2}^{\frac{2-3c}{1+c}}A(\rho_{k+2})^\frac 12E(\rho_{k+2})^\frac12
 			+ 2^{\frac{ (k+3)(1+6c)}{1+c}}\rho_{k+2}^3 P(\rho_{k+2}) \Big],
 		\end{align*}
 		where $N=N(c)>0$. Inserting  the above inequality into \eqref{3.13}, by \eqref{1.3} and \eqref{AC},   we have
 		\begin{align}\label{3.16}
 			&\int_{B_{k+1}} |p-(p)_{B_{k+1}}||u| \ dx\nonumber\\
 			& \leq N\Big(\rho_{k+2}^2A(\rho_{k+2})^\frac{q+6qc-5-5c}{5(1+c)(q-2)}C(\rho_{k+2})^\frac{3-7c}{5(1+c)(q-2)}F(\rho_{k+2})^\frac12 \nonumber\\
 			&\quad+\rho_{k+2}^2A(\rho_{k+2})^\frac{7q-8qc-20+30c}{10(1+c)(q-2)}C(\rho_{k+2})^\frac{3-7c}{5(1+c)(q-2)}E(\rho_{k+2})^\frac{1+6c}{2(1+c)}\nonumber\\
 			&\quad+\rho_{k+2}^2A(\rho_{k+2})^\frac{7q+17qc-20-20c}{10(1+c)(q-2)}C(\rho_{k+2})^\frac{3-7c}{5(1+c)(q-2)}E(\rho_{k+2})^\frac12\nonumber\\
 			&\quad+2^{\frac{ (k+3)(1+6c)}{1+c}}\rho_{k+2}^{\frac{3+8c}{1+5c}} A(\rho_{k+2})^\frac{q+6qc-5-5c}{5(1+c)(q-2)}C(\rho_{k+2})^\frac{3-7c}{5(1+c)(q-2)} P(\rho_{k+2})\Big)\notag\\
 			&\leq N\Big(\varepsilon^{\frac{q+2}{2q}}+\varepsilon^\frac{2-3c}{q(1+c)}E(\rho_{k+2})^\frac{1+6c}{2(1+c)}+\varepsilon^\frac{2}{q}E(\rho_{k+2})^\frac12+2^{\frac{ (k+3)(1+6c)}{1+c}}\varepsilon^{\frac{q+1}{q}} \Big),
 		\end{align}
 		where all the exponents of   scale invariant quantities are positive due to the facts $c\in \Big(\frac{5-q}{6q-5},\ \frac14\Big)$ and $q\in \Big(\frac52, \frac{10}{3}\Big)$.
 		
 		For the last two terms on the right-hand side of \eqref{3.11},  by   \eqref{1.3}, \eqref{3.12}, and H\"older's inequality, we  obtain
 		\begin{align}\label{3.17}
 			\int_{B_{k+1}}|u|^2\ dx\leq&N \rho_{k+1}^\frac53\Big( \int_{B_{k+1}}|u|^3\ dx\Big)^\frac23\nonumber\\
 			\leq& N \rho_{k+1}^{3}\Big(A(\rho_{k+1})^{\frac{q+3aq-10a}{6q}}C(\rho_{k+1})^{\frac{2a}{3q}}E(\rho_{k+1})^{\frac{5(q-aq+2a)}{6q}} \nonumber\\
 			& + A(\rho_{k+1})^\frac{3-a}{3}C(\rho_{k+1})^\frac{2a}{3q}\Big)\notag\\
 			\leq &N\Big(\varepsilon^{\frac{q+5aq-10a}{3q^2}}E(\rho_{k+1})^\frac{5(q-aq+2a)}{6q}+\varepsilon^\frac{2}{q}\Big)
 		\end{align}
 		and
 		\begin{align}\label{3.18}
 			\int_{B_{k+1}}|f||u|\ dx&\leq \Big(  \int_{B_{k+1}}|f|^2\ dx\Big)^\frac12 \Big(  \int_{B_{k+1}}|u|^2\ dx\Big)^\frac12\nonumber\\
 			& \leq   \rho_{k+1}A(\rho_{k+1})^\frac12F(\rho_{k+1})^\frac12\leq N\varepsilon^{\frac{q+2}{2q}},
 		\end{align}
 		where $N=N(a, q)>0$.
 		
 		The combination of   \eqref{3.11}, \eqref{3.12}, and  \eqref{3.16}-\eqref{3.18}   yields
 		\begin{align*}
 			E(\rho_k)\leq&    2^{2k}N\Big(\varepsilon^{ \frac{q+5aq-10a}{2q^2}}E(\rho_{k+2})^{\frac{5(q-aq+2a)}{4q}}+\varepsilon^\frac{3}{q}
 			+ \varepsilon^{\frac{q+2}{2q}}\\  &+\varepsilon^{\frac{2-3c}{q(1+c)}}E(\rho_{k+2})^\frac{1+6c}{2(1+c)}
 			+\varepsilon^{\frac{2}{q}}E(\rho_{k+2})^\frac12  +2^{\frac{ (k+3)(1+6c)}{1+c}}\varepsilon^{\frac{q+1}{q}}  \\
 			&+\varepsilon^{\frac{q+5aq-10a}{3q^2}}E(\rho_{k+2})^{\frac{5(q-aq+2a)}{6q}}+\varepsilon^\frac{2}{q}+\varepsilon^{\frac{q+2}{2q}}\Big)
 			\\
 			\leq &2^{2k}N\Big(\varepsilon^{ \frac{q+5aq-10a}{2q^2}}E(\rho_{k+2})^{\frac{5(q-aq+2a)}{4q}}
 			+\varepsilon^{\frac{2-3c}{q(1+c)}}E(\rho_{k+2})^\frac{1+6c}{2(1+c)} +\varepsilon^{\frac{2}{q}}E(\rho_{k+2})^\frac12  \\
 			&+\varepsilon^{\frac{q+5 aq-10a}{3q^2}}E(\rho_{k+2})^{\frac{5(q-aq+2a)}{6q}}+ 2^{\frac{ (k+3)(1+6c)}{1+c}} \varepsilon^{\frac{q+1}{q}} +\varepsilon^\frac{2}{q}\Big),
 		\end{align*}
 		where $N=N(a, c, q)$. Since $ c\in \Big(\frac{5-q}{6q-5}, \frac14\Big)$,  if we take  $a\in \Big(\frac{q}{5q-10}, 1\Big)$,   then all the  exponents of $E(\rho_{k+2})$ in the above inequality are  positive and  smaller than one.
 		Thus, by   Young's inequality, for any $\delta>0$, we have
 		\begin{align*}
 			E(\rho_k)\leq  \delta^2 E(\rho_{k+2})+2^{4k} N\varepsilon^\beta,
 		\end{align*}
 		where  $ N=N(c, q, \delta)>0$, $\beta=\beta(c, q)>0$ are some constants.
 		
 		Let $\delta= 3^{-4}$. Multiplying  both sides of the above inequality by $\delta^k$  and summing over $k$ from 1 to infinity, one obtains
 		\begin{align*}
 			\sum_{k=1}^\infty \delta^k E(\rho_k)\leq \sum_{k=3}^\infty  \delta^{k}E(\rho_k)+  N\varepsilon^\beta \sum_{k=1}^\infty\Big(\frac{2}{3}\Big)^{4k},
 		\end{align*}
 		which implies
 		\begin{align*}
 			E( 3/4)\leq  N\varepsilon^\beta.
 		\end{align*}
 		The lemma is proved.
 	\end{proof}
 	
 	Based on the above three lemmas, we will verify the decay estimates of  $A(x,\rho)+ E(x, \rho)$, $G(x, \rho)$, and $ P(x, \rho)$ with respect to  radius when $A(\rho_0), E(\rho_0), F(\rho_0)$, and  $P(\rho_0)$ are sufficiently small.
 	
 	\begin{lemma}\label{lem 3.4}
 		There exists a constant $\tilde\varepsilon>0$ satisfying the following property. If  we have
 		\begin{align}\label{3.20}
 			A(\rho_0)+E(\rho_0)+F(\rho_0)^\frac{2-l}{2(1+l)}+P(\rho_0)^
 			\frac{2-l}{1+l}  \leq \tilde\varepsilon,
 		\end{align}
 		then there exists sufficiently small constant  $l\in \Big(0,  \frac{4q-10}{5q-5}\Big)$ such that for any  $x\in B_{\rho_0/6}$ and   $\rho\in (0, \rho_0/2)$, the following inequalities hold
 		\begin{align}\label{3.21}
 			A(x, \rho)+ E(x, \rho)\leq N\tilde\varepsilon^{\frac{2-l}{2}}  \Big(\frac{\rho}{\rho_0} \Big)^{2-l},
 		\end{align}
 		\begin{align}\label{3.22}
 			G(x, \rho)\leq N \tilde\varepsilon^\frac{5(1+c)(1+l) }{2(4-c)} \Big(\frac{\rho}{\rho_0} \Big) ^{\frac{5(1+c)(1+l)}{4-c}},
 		\end{align}
 		and
 		\begin{align}\label{3.23}
 			P(x, \rho)\leq N\tilde\varepsilon^{\frac{1+l}{2}} \Big(\frac{\rho}{\rho_0} \Big)^{1+l},
 		\end{align}
 		where $N$ is a positive constant depending on $c, l$, but independent of $\tilde\varepsilon, \rho$ and $x$.
 	\end{lemma}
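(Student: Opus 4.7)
The plan is to prove the three estimates \eqref{3.21}--\eqref{3.23} simultaneously by induction along a geometric sequence of scales, using Lemmas \ref{lem 3.1} and \ref{lem 2} for the induction step, and to derive the $P$ bound from the $G$ bound by a single Hölder inequality. Fix a small ratio $\gamma \in (0, 1/2]$ and a small exponent $l > 0$ to be chosen below, and for $x \in B_{\rho_0/6}$ set $r_k = \gamma^k(\rho_0/2)$. Since $B(x, \rho_0/2) \subset B_{2\rho_0/3} \subset B_{\rho_0}$, the smallness hypothesis \eqref{3.20} transfers (up to harmless constants) to $A(x,\rho_0/2)+E(x,\rho_0/2)+F(x,\rho_0/2)^{(2-l)/(2(1+l))}+P(x,\rho_0/2)^{(2-l)/(1+l)} \lesssim \tilde\varepsilon$, and Lemma \ref{lem 2} applied on a slightly larger ball yields the corresponding smallness for $G(x,\rho_0/2)$. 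This takes care of the base case $k=0$.

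For the induction step, assume \eqref{3.21}--\eqref{3.23} at scale $r_k$. Substituting into Lemma \ref{lem 3.1} produces one linear contribution $N\gamma^2 (A+E)(x,r_k)$, which for $\gamma$ small satisfies $N\gamma^2 \leq \tfrac12 \gamma^{2-l}$, together with super-linear products of $A, E, G$ and a forcing term in $F$. A direct exponent calculation shows each super-linear product inherits strict extra decay: for example $A^{1/4}E^{5/4} \lesssim (A+E)^{3/2}$ contributes $\tilde\varepsilon^{3(2-l)/4}(r_k/\rho_0)^{3(2-l)/2}$, and the mixed term $A^{(9c-1)/(4(1+c))}E^{(3-7c)/(4(1+c))}G^{(4-c)/(5(1+c))}$ (whose $(A,E)$-exponents sum to $1/2$) contributes $(r_k/\rho_0)^{2+l/2}$ after invoking the induction hypothesis on $G$. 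The forcing satisfies $F(x,r) \lesssim r^{6-10/q}$ by Hölder and the $L^q$ control of $f$, and $6-10/q > 2-l$ since $q > 5/2$. Choosing $\tilde\varepsilon$ small absorbs all super-linear contributions, closing the induction for $A+E$.

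For $G$, the Hölder relation
\begin{align*}
  P(x,r) \leq N\, G(x,r)^{(4-c)/(5(1+c))},
\end{align*}
verified in dimension five by a direct exponent calculation, converts the linear-in-$P^{5(1+c)/(4-c)}$ term of Lemma \ref{lem 2} into a linear-in-$G$ term with coefficient $\gamma^{15(1+c)/(4-c)}$; the remaining terms are super-linear and absorbed as before. The induction closes because $15(1+c)/(4-c) > 5(1+c)(1+l)/(4-c)$ for $l$ small, allowing $\gamma$ small enough to produce the target decay. Estimate \eqref{3.23} then follows from \eqref{3.22} by the same Hölder relation.

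Once the estimates hold at each $r_k$, any $\rho \in (0, \rho_0/2)$ lies in some $[r_{k+1}, r_k]$ and a monotonicity argument (losing only the constant factor $\gamma^{-(2-l)}$) yields the full statement. The main obstacle will be the exponent bookkeeping: after substituting the induction hypothesis into Lemmas \ref{lem 3.1} and \ref{lem 2}, every cross-term must be verified to carry a strictly positive power of $\tilde\varepsilon$ beyond the target $\tilde\varepsilon^{(2-l)/2}$ (respectively $\tilde\varepsilon^{5(1+c)(1+l)/(2(4-c))}$) together with at least the prescribed decay in $r_k/\rho_0$. The assumptions $q \in (5/2, 10/3)$ and $c \in ((5-q)/(6q-5), 1/4)$ are precisely what produce strict slack in all of these inequalities, and the upper bound $l < (4q-10)/(5q-5)$ quantifies the amount of slack available.
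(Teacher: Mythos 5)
Your proposal is correct in substance and rests on the same engine as the paper -- induction over shrinking scales driven by Lemma \ref{lem 3.1} and Lemma \ref{lem 2}, with $P$ recovered from $G$ via the H\"older relation $P(x,r)\leq N\,G(x,r)^{\frac{4-c}{5(1+c)}}$ (this is exactly the paper's \eqref{p}--\eqref{3.38}) -- but the iteration mechanics are genuinely different. You run a fixed-ratio geometric iteration $r_k=\gamma^k\rho_0/2$ and close the linear terms by choosing $\gamma$ first (possible precisely because the target exponents $2-l$ and $\frac{5(1+c)(1+l)}{4-c}$ sit strictly below the ``natural'' exponents $2$ and $\frac{15(1+c)}{4-c}$ coming from $\gamma^2 A$ and $\gamma^{\frac{15(1+c)}{4-c}}G$), then absorb all superlinear and forcing terms by smallness of $\tilde\varepsilon$; your spot checks of the exponents (the $(A,E)$-sum $1/2$ in the mixed term, the gain $(r_k/\rho_0)^{2+l/2}$ versus $2-l$, the comparison $15(1+c)>5(1+c)(1+l)$) are accurate, and the base case via Lemma \ref{lem 2} on $B(x,5\rho_0/6)$ with $\gamma=3/5$ matches \eqref{3.29}. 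The paper instead takes radii $\rho_k=\tilde\rho^{(1+\beta)^k}$ with the variable ratio $\gamma=\rho_k^{\beta}$, introduces an auxiliary exponent $\alpha\in(2-l,2)$, and normalizes $\tilde\rho^{\alpha}=N\tilde\varepsilon$ by the scaling \eqref{eq10.23}; there no linear-term absorption occurs -- every term, including $\gamma^2A$, carries a power of $\rho_k$ strictly above $\alpha(1+\beta)$, and the surplus $\rho_k^{\xi}\leq(N\tilde\varepsilon)^{\xi/2}$ kills the constants -- after which the stated $\tilde\varepsilon$-prefactors in \eqref{3.21}--\eqref{3.23} are recovered by undoing the scaling. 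Your route is the more standard Campanato-type excess-decay argument and avoids the scaling normalization and the $\alpha$-bookkeeping, at the cost of tracking the $\tilde\varepsilon$-prefactors $\tilde\varepsilon^{(2-l)/2}$, $\tilde\varepsilon^{\frac{5(1+c)(1+l)}{2(4-c)}}$ explicitly through the induction; the paper's route trades that for the $(1+\beta)^k$ radii and the constraint list on $\beta$. Two small caveats, neither fatal and both shared with the paper: the forcing term is controlled through $F(x,r)\lesssim r^{6-10/q}\|f\|_{L^q}^2$, so like the paper you are implicitly using the $L^q$ bound on $f$ from \eqref{1.3} in addition to the literal hypothesis \eqref{3.20}; and in passing to intermediate radii $\rho\in[r_{k+1},r_k]$ the loss is not just $\gamma^{-(2-l)}$ but also the normalization factors such as $\gamma^{-3}$ for $A$ and $P$ and $\gamma^{-\frac{10-15c}{4-c}}$ for $G$, which is harmless since these only enter the constant $N=N(c,l)$.
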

 	\begin{proof}
 		We use an induction argument.
 		Let  $\rho_k={\tilde\rho }^{(1+\beta)^k}$, where  $\tilde\rho=\rho_0/2$ and $\beta>0$ is some  small constant which will be specified later.  We fix an auxiliary parameter $\alpha\in (2-l, 2)$.  By a scaling   argument,  we  first   consider the special case: $\tilde\rho^{\alpha }=N \tilde\varepsilon $. For any $x\in B_{\rho_0/6}$, it is sufficient for us to prove the following decay estimates:
 		\begin{align}\label{3.24}
 			A(x, \rho_k)+ E(x, \rho_k)  \leq  \rho_k^{\alpha},   \quad  G(x, \rho_k)\leq \rho_k^{\frac{5\alpha(1+c)(1+l)}{(4-c)(2-l)}},\quad P(x, \rho_k)\leq \rho_k^{\frac{\alpha(1+l)}{2-l}} .
 		\end{align}

 		Let us first verify the  above estimates when $k=0$.
 		For  any  $x\in B_{\rho_0/6}$, since  $\tilde\rho=\rho_0/2$, by the   condition \eqref{3.20} and the fact $B(x,\tilde\rho)\subset  B_{\rho_0}$,
 		we get
 		\begin{align}\label{3.26}
 			A(x, \tilde\rho) + E(x, \tilde\rho) + F(x, \tilde\rho)^\frac{2-l}{2(1+l)}+ P(x, \tilde\rho)^\frac{2-l}{1+l}\leq N\tilde\varepsilon.
 		\end{align}
 		For $G(x, \tilde\rho)$,   since   $B(x, 5\tilde\rho/3)\subset B_{\rho_0}$, by \eqref{3.20}  and  Lemma \ref{lem 2}, we have
 		\begin{align}\label{3.29}
 			&G(x, \tilde\rho)\nonumber\\
 			&\leq  N\Big(P(x,5\tilde\rho/3)^{\frac{5(1+c)}{4-c}}+A(x, 5\tilde\rho/3)^\frac{5(1-4c)}{2(4-c)}E(x,5\tilde\rho/3)^\frac{5(1+6c)}{2(4-c)}+F(x, 5\tilde\rho/3)^\frac{5(1+c)}{2(4-c)}\Big)\nonumber\\
 			&\leq N\Big(P( \rho_0)^{\frac{5(1+c)}{4-c}}+A( \rho_0)^\frac{5(1-4c)}{2(4-c)}E( \rho_0)^\frac{5(1+6c)}{2(4-c)}+F( \rho_0)^\frac{5(1+c)}{2(4-c)}\Big)\nonumber\\
 			&\leq N\Big(\tilde\varepsilon^ {\frac{1+l}{2-l}\cdot\frac{5(1+c)}{4-c}}+\tilde\varepsilon^{ \frac{5(1-4c)}{2(4-c)}+ \frac{5(1+6c)}{2(4-c)}}+\tilde\varepsilon^{\frac{2(1+l)}{2-l}\cdot\frac{5(1+c)}{2(4-c)}}\Big)\leq( N \tilde\varepsilon)^{\frac{5(1+c)(1+l)}{(4-c)(2-l)}},
 		\end{align}
 		where $N=N(c, l)$.
 		Due to  \eqref{3.26}  and \eqref{3.29}, we can choose $N>0$ such that $\tilde\rho^\alpha=N\tilde\varepsilon $.  Hence, we obtain
 		\begin{align}\label{3.30}
 			A(x, \tilde\rho) \leq   \tilde\rho ^{\alpha},  \quad E(x, \tilde\rho) \leq \tilde\rho^{\alpha }, \quad  G(x, \tilde\rho) \leq \tilde\rho^{\frac{5\alpha(1+c)(1+l)}{(4-c)(2-l)}}, \quad
 			P(x, \tilde\rho) \leq \tilde\rho^{\frac{\alpha(1+l)}{2-l}}.
 		\end{align}
 		Thus,  we proved \eqref{3.24} for $k=0$ .
 		
 		Suppose  \eqref{3.24} holds for $0$ to $k$, we want to prove that it is also  true for $k+1$.
 		
 		We  first  estimate $A(x,\rho_{k+1})+E(x,\rho_{k+1})$. Taking $\gamma=\rho_k^{\beta}$ and  $\rho=\rho_k$ in  \eqref{3.1}, we have
 		\begin{align*}
 			&A(x,\rho_{k+1}) +E(x,\rho_{k+1})\\
 			&\leq \Big(\rho_k^{2\beta+\alpha}+\rho_k^{-2\beta+\frac{\alpha}{4}+\frac{5\alpha}{4}}+\rho_k^{-2\beta+\frac{3\alpha}{2}}+\rho_k^{-2\beta+\frac{\alpha(9c-1)}{4(1+c)}+\frac{\alpha(3-7c)}{4(1+c)}+\frac{5\alpha(1+c)(1+l)}{(4-c)(2-l)}\cdot\frac{4-c}{5(1+c)}}\\
 			&\quad +\rho_k^{-2\beta+\frac{\alpha}{2}+\frac{5\alpha(1+c)(1+l)}{(4-c)(2-l)}\cdot\frac{4-c}{5(1+c)}}+\rho_k^{6-\frac{10}{q}-4\beta}\|f\|_{L^q}^2\Big)\\
 			&\leq N\Big(\rho_k^{2\beta+\alpha}+\rho_k^{-2\beta+\frac{3}{2}\alpha} +\rho_k^{-2\beta+\frac{\alpha(4+l)}{4-2l}} +\rho_k^{6-\frac{10}{q}-4\beta}\|f\|_{L^q}^2\Big),
 		\end{align*}
 		where $N=N(c)$.
 		If we take $\beta< \min \big\{ \frac{\alpha}{2(\alpha+2)},  \frac{3\alpha l}{(4-2l)(\alpha+2)}, \frac{6q-\alpha q-10}{(4+\alpha)q}\big \}$,  then we have
 		\begin{align*}
 			\min\Big\{ 2\beta+\alpha,\  -2\beta+ \frac32\alpha, \ -2\beta+\frac {\alpha(4+l)}{4-2l}, \  6- \frac{10}{q}-4\beta  \Big\}> \alpha(1+\beta).
 		\end{align*}
 		Hence, there exists a small constant $\xi>0$ such that
 		\begin{align}\label{3.31}
 			A(x, \rho_{k+1})+E(x, \rho_{k+1})\leq N \rho_{k}^{\alpha(1+\beta)+\xi(1+\beta)}\leq N\rho_{k+1}^{\alpha+\xi}.
 		\end{align}
 		We choose a sufficiently small $\tilde\varepsilon$  which satisfies
 		\begin{align*}
 			N\rho_{k+1}^\xi< N\tilde\rho^\xi<N(N\tilde\varepsilon)^{\frac{\xi}{2}}\leq 1.
 		\end{align*}
 		Inserting  the above inequality into \eqref{3.31}, we have
 		\begin{align}\label{3.32}
 			A(x, \rho_{k+1}) +  E(x, \rho_{k+1}) \leq \rho_{k+1}^{\alpha}.
 		\end{align}
 		
 		Next we bound $G(x, \rho_{k+1})$. Taking $\gamma=\rho_k^{\beta}$ and  $\rho=\rho_k$ in \eqref{3.7}, by  \eqref{3.24} and H\"older's inequality,  one has
 		\begin{align}\label{3.33}
 			G(x, \rho_{k+1})\leq& N\Big(\rho_k^{\big(-\frac{10-15c}{4-c}+\frac{25}{4-c}\big)\beta+\frac{5\alpha(1+c)(1+l)}{(4-c)(2-l)}}+\rho_k^{-\frac{(10-15c)\beta}{4-c} +\frac{5\alpha(1-4 c)}{2(4-c)}+\frac{5\alpha(1+6c)}{2(4-c)}}\nonumber\\
 			&+\rho_k^{-\frac{5(2-3c)\beta}{4-c}+\frac{15(1+c)q-25(1+c)}{(4-c)q}}\|f\|_{L^q}^\frac{5(1+c)}{4-c}\Big)\nonumber\\
 			\leq& N\Big(\rho_k^{\frac{15(1+c)\beta}{4-c} +\frac{5\alpha(1+c)(1+l)}{(4-c)(2-l)}}+\rho_k^{-\frac{(10-15c)\beta}{4-c} +\frac{5\alpha(1+ c)}{4-c}}\nonumber\\
 			&+\rho_k^{-\frac{5(2-3c)\beta}{4-c}+\frac{5(1+c)(3q-5)}{(4-c)q}}\|f\|_{L^q}^\frac{5(1+c)}{4-c}\Big),
 		\end{align}
 		where $N=N(c)$.
 		If we take
 		\begin{align*}
 			0<\beta <
 			\frac{(1+c)[6q-\alpha q-10-(3q+\alpha q-5)l]}{q[(1+c)(1+l)\alpha+(2-3c)(2-l)]}
 		\end{align*}
 		and
 		\begin{align*}
 			0<l < \frac{4q-10}{5q-5}<  \frac{6q-\alpha q-10}{3q+\alpha q-5},
 		\end{align*}
 		then  all the exponents of $\rho_k$ on the right-hand side of \eqref{3.33} are greater than $\frac{5\alpha(1+c)(1+l)(1+\beta)}{(4-c)(2-l)}$.
 		Hence, there exists a small constant $\xi>0$ such that \begin{align}\label{3.36}
 			G(x, \rho_{k+1})\leq N\rho_{k}^{\big[\frac{5\alpha(1+c)(1+l)}{(4-c)(2-l)}+\xi\big](1+\beta)}\leq N\rho_{k+1}^{\frac{5\alpha(1+c)(1+l)}{(4-c)(2-l)}+\xi}.
 		\end{align}
 		By choosing  $\tilde\varepsilon$ sufficiently small,  one has
 		\begin{align*}
 			N\rho_{k+1}^{\xi}< N \tilde\rho^\xi\leq N(N\tilde\varepsilon)^\frac{\xi}{2}<1.
 		\end{align*}
 		Inserting the  above inequality  into \eqref{3.36}, we have
 		\begin{align}\label{3.37}
 			G(x, \rho_{k+1})\leq \rho_{k+1}^{\frac{5\alpha(1+c)(1+l)}{(4-c)(2-l)}}.
 		\end{align}
 		
 		For  $P(x,\rho_{k+1})$, by H\"older's inequality, we have
 		\begin{align}\label{p}
 			&\int_{B(x,\rho_{k+1})} |p-(p)_{B(x,\rho_{k+1})}|\ dx\notag\\
 			&\leq N \rho_{k+1}^\frac{1+6c}{1+c}\Big(\int_{B(x,\rho_{k+1})}
 			|p-(p)_{B(x,\rho_{k+1})}|^\frac{5(1+c)}{4-c} \ dx\Big)^\frac{4-c}{5(1+c)}.
 		\end{align}
 		The combination of \eqref{3.36}  and \eqref{p}  implies
 		\begin{align}\label{3.38}
 			P(x, \rho_{k+1})\leq N G(x, \rho_{k+1})^\frac{4-c}{5(1+c)}\leq N \rho_{k+1}^{\frac{\alpha(1+l)}{2-l}+\frac{(4-c)\xi}{5(1+c)}},
 		\end{align}
 		where $N=N(c)$.
 		We choose  $\tilde\varepsilon$ sufficiently small  such that
 		\begin{align*}
 			N\rho_{k+1}^{\frac{(4-c)\xi}{5(1+c)}}<N\tilde\rho^{\frac{(4-c)\xi}{5(1+c)}}\leq N (N\tilde\varepsilon)^\frac{(4-c)\xi}{10(1+c)}<1.
 		\end{align*}
 		Inserting the  above inequality into \eqref{3.38}, we have
 		\begin{align}\label{3.39}
 			P(x, \rho_{k+1})\leq \rho_{k+1}^\frac{\alpha(1+l)}{2-l}.
 		\end{align}
 		
 		Using \eqref{3.30}, \eqref{3.32}, \eqref{3.37}, and \eqref{3.39}, by induction we obtain \eqref{3.24}  for any integer $k\geq 0$.
 		
 		For any  $\rho\in \big(0, \rho_0/2\big)$, there exists a positive integer $k$ such that $\rho_{k+1}\leq\rho<\rho_k$.  By \eqref{3.24}, we obtain
 		\begin{align}\label{3.40}
 			A(x, \rho)\leq \Big(\frac{\rho_k}{\rho_{k+1}}\Big)^{3}A(x, \rho_k)\leq \rho_k^{\alpha-3\beta}=\rho_{k+1}^{\frac{\alpha-3\beta}{1+\beta}}\leq \rho^{2-l},
 		\end{align}
 		\begin{align*}
 			E(x, \rho)\leq \frac{\rho_k}{\rho_{k+1}}E(x, \rho_k)\leq \rho_k^{\alpha-\beta}=\rho_{k+1}^{\frac{\alpha-\beta}{1+\beta}}\leq \rho^{2-l},
 		\end{align*}
 		\begin{align}\label{3.42}
 			G(x, \rho)\leq &\Big(\frac{\rho_k}{\rho_{k+1}}\Big)^{\frac{10-15c}{4-c}}G(x, \rho_k)     \leq \rho_k^{\frac{5\alpha(1+c)(1+l)}{(4-c)(2-l)}-\frac{(10-15c)\beta}{4-c}}\nonumber\\
 			=&\rho_{k+1}^{ \frac{5\alpha(1+c)(1+l)}{(4-c)(1+\beta)(2-l)}-\frac{(10-15c)\beta}{(4-c)(1+\beta)}}
 			\leq \rho^{\frac{5(1+c)(1+l)}{4-c}},
 		\end{align}
 		and
 		\begin{align}\label{3.43}
 			P(x, \rho)\leq \Big(\frac{\rho_k}{\rho_{k+1}}\Big)^{3}P(x, \rho_k)\leq \rho_k^{\frac{\alpha(1+l)}{2-l}-3\beta}=\rho_{k+1}^{\frac{1}{1+\beta}\big[\frac{\alpha(1+l)}{2-l}-3\beta\big]}\leq \rho^{1+l},
 		\end{align}
 		where we take $\beta$ sufficiently small such that  $\beta\in \Big(0, \frac{(1+l)(\alpha-2+l)}{(2-l)(4+l)}\Big)$.

 		Hence, the lemma is proved when $\tilde\rho^{\alpha}=N\tilde\varepsilon $.
 		
 		Next we consider the  general case. Recall \eqref{eq10.23}. By taking $\lambda=\rho_0/(N\tilde\varepsilon)^{\frac{1}{\alpha}}$,
 		we know that $(u_\lambda, p_\lambda)$ is also a suitable weak solution to \eqref{1.1} in $  B\big(x, (N\tilde\varepsilon)^\frac{1}{\alpha}\big)$. Moreover,
 		\begin{align*}
 			A(x, \rho)=\frac{1}{\rho^3}\int_{B(x, \rho)}|u(y)|^2dy=\Big(\frac{\lambda}{\rho}\Big)^3\int_{B(x, \rho/\lambda)}|u_\lambda(y)|^2dy,
 		\end{align*}
 		\begin{align*}
 			E(x, \rho)=\frac{1}{\rho}\int_{B(x, \rho)}|\nabla u(y)|^2 dy=\frac{\lambda}{\rho}\int_{B(x, \rho/ \lambda)}|\nabla u_\lambda (y)|^2 dy,
 		\end{align*}
 		\begin{align*}
 			G(x, \rho)&=\frac{1}{\rho^{\frac{10-15c}{4-c}}}\int_{B(x, \rho)}|p(y)-(p)_{B(x, \rho)}|^\frac{5(1+c)}{4-c}dy\nonumber\\
 			&=\Big(\frac{\lambda}{ \rho}\Big)^{\frac{10-15c}{4-c}}\int_{B(x, \rho/\lambda)}|p_\lambda(y)-(p_\lambda)_{B(x,
 				\rho/\lambda)}|^\frac{5(1+c)}{4-c}dy
 		\end{align*}
 		as well as
 		\begin{align*}
 			P(x, \rho)&=\frac{1}{\rho^3}\int_{B(x, \rho)}|p(y)-(p)_{B(x,\rho)}|dy\nonumber\\
 			&=\Big(\frac{\lambda}{\rho}\Big)^3\int_{B(x, \rho/\lambda)}|p_\lambda(y)-(p_\lambda)_{B(x, \rho/\lambda)}|^2 dy.
 		\end{align*}
 		Applying \eqref{3.40}-\eqref{3.43} to $(u_\lambda,p_\lambda)$, we get
 		\begin{align*}
 			A(x, \rho)+E(x, \rho) \leq \Big(\frac{\rho}{\lambda}\Big)^{2-l}\leq N\tilde\varepsilon^\frac{2-l}{\alpha}\Big(\frac{\rho}{\rho_0}\Big)^{2-l}\leq N\tilde\varepsilon^\frac{2-l}{2}\Big(\frac{\rho}{\rho_0}\Big)^{2-l},
 		\end{align*}
 		\begin{align*}
 			G(x, \rho)\leq \Big(\frac{\rho}{\lambda}\Big)^\frac{5(1+c)(1+l)}{4-c} \leq N\tilde\varepsilon^\frac{5(1+c)(1+l)}{2(4-c)}\Big(\frac{\rho}{\rho_0}\Big)^\frac{5(1+c)(1+l)}{4-c}
 		\end{align*}
 		as well as
 		\begin{align*}
 			P(x, \rho) \leq  \Big(\frac{\rho}{\lambda}\Big)^{1+l
 			}\leq N\tilde\varepsilon^\frac{1+l}{2}\Big(\frac{\rho}{\rho_0}\Big)^{1+l
 			},
 		\end{align*}
 		where $N=N(c, l)$.
 		Thus, the lemma is proved.
 	\end{proof}
 	
 	In the rest of this section, we will  utilize   Lemma \ref{lem 3.4} to prove Theorem \ref{Them 1}.
 	Let   $\rho_0=3/4$. By the  condition \eqref{1.3}, H\"older's inequality which gives
 	\begin{align*}
 		A(1) \leq NC(1)^\frac{2}{q}\leq N\varepsilon^\frac{2}{q}
 	\end{align*}
 	and   Lemma \ref{lem 3.3}, we can choose  $\varepsilon>0$ sufficiently small such that the condition \eqref{3.20} holds. Hence, we  obtain  the decay estimates  \eqref{3.21}-\eqref{3.23} in  Lemma \ref{lem 3.4}. Let  $x\in B_{1/8}$ and   $\rho\in\big (0,3/8\big)$. We  decompose the suitable  weak solution $u$ of \eqref{1.1} as  $u=w+v$, where $w$ satisfies the equation
 	\begin{align*}
 		\Delta w_i=\partial_i\big(p-(p)_{B(x, \rho)}\big)+\partial_j\big(u_iu_j\big)+f_i
 	\end{align*}
 	with the zero boundary condition.  Hence, by the Calder\'on-Zygmund  estimate, we have
 	\begin{align}\label{3.48}
 		&\big\|\nabla w\big\|_{L^\frac{10}{7}(B(x,\rho))}\nonumber\\
 		&\leq N\Big(\big\| p-(p)_{B(x, \rho)}\big\|_{L^\frac{10}{7}(B(x,\rho))}+\big\||u|^2\big\|_{L^\frac{10}{7}(B(x,\rho))}+\rho\big\|f\big\|_{L^\frac{10}{7}(B(x,\rho))}\Big).
 	\end{align}

 	
 	We  first estimate the pressure term on the right-hand side of   \eqref{3.48}.  By  \eqref{3.22} and \eqref{3.23} in Lemma \ref{lem 3.4}, we have
 	\begin{align*}
 		G(x, \rho)\leq N  \rho^{\frac{5(1+c)(1+l)}{4-c}},\quad P(x,\rho)\leq N \rho^{1+l},
 	\end{align*}
 	where $N=N(c, l)>0$ is some constant.
 	Thus, by  H\"older's inequality and the  above estimates,  one derives
 	\begin{align}\label{3.49}
 		&\Big(\int_{B(x,\rho)}|p-(p)_{B(x, \rho)}|^\frac{10}{7}\ dx\Big)^\frac{7}{5}\nonumber\\
 		&\leq  \Big(\int_{B(x,\rho)}|p-(p)_{B(x, \rho)}| \ dx\Big)^\frac{9c-1}{1+6c}\Big(\int_{B(x,\rho)}|p-(p)_{B(x, \rho)}|^\frac{5(1+c)}{4-c}\ dx\Big)^\frac{3(4-c)}{5(1+6c)}\nonumber\\
 		&\leq \rho^3P(x,\rho)^\frac{9c-1}{1+6c}G(x,\rho)^\frac{3(4-c)}{5(1+6c)}\leq  N\rho^{5+2l}.
 	\end{align}
 	
 	For the second term on the right-hand side of \eqref{3.48},  due to  \eqref{3.21}   in  Lemma \ref{lem 3.4}, we have
 	\begin{align*}
 		A(x,\rho)+ E(x, \rho)  \leq N \rho^{2-l},
 	\end{align*}
 	where $N=N(c, l)$.
 	Thus,  by  the Sobolev embedding inequality and  the above decay rate, one obtains
 	\begin{align}\label{3.50}
 		&\Big( \int_{B(x,\rho)}|u|^\frac{20}{7} \ dx\Big)^\frac{7}{5}\nonumber\\
 		&\leq N  \Big(\int_{B(x,\rho)}|u|^2 \ dx\Big)^\frac12\Big(\int_{B(x,\rho)}|\nabla u|^2 \ dx+\rho^{-2}\int_{B(x,\rho)}|u|^2 \ dx\Big)^\frac32\nonumber\\
 		&\leq N \rho^3  A(x,\rho)^\frac12\big(E(x,\rho)+A(x,\rho)\big)^\frac32\leq N \rho^{7-2l},
 	\end{align}
 	where $N=N(c, l)$.
 	
 	For  the last  term on the right-hand side of \eqref{3.48},  by H\"older's
 	inequality, one  derives
 	\begin{align}\label{3.51}
 		\rho^2 \Big(\int_{B(x,\rho)}|f|^\frac{10}{7} \ dx\Big)^\frac{7}{5}&\leq N \rho^{9-\frac{10}{q}}\Big(\int_{B(x,\rho)}| f|^q \ dx\Big)^\frac{2}{q}\nonumber\\
 		&= N \rho^{9-\frac{10}{q}} \| f\|_{L^q(B_1)}^2.
 	\end{align}

 	The combination of the Sobolev-Poincar\'e  inequality and  \eqref{3.48}-\eqref{3.51} yields
 	\begin{align}\label{3.52}
 		&\int_{B(x, \rho)}|w-(w)_{B(x, \rho)}|^2 \ dx\leq  N\Big(\int_{B(x, \rho)}|\nabla  w|^\frac{10}{7} \ dx\Big)^\frac{7}{5}\nonumber\\
 		&\leq N\Big[  \Big(\int_{B(x,\rho)}|p-(p)_{B(x, \rho)}|^\frac{10}{7}\ dx\Big)^\frac{7}{5}+\Big( \int_{B(x,\rho)}|u|^\frac{20}{7} \ dx\Big)^\frac{7}{5}\nonumber\\
 		&\quad + \rho^2 \Big(\int_{B(x,\rho)}|f|^\frac{10}{7} \ dx\Big)^\frac{7}{5}\Big]
 		\nonumber\\
 		&\leq N\Big(  \rho^{ 5+2l}+ \rho^{7-2l}+\rho^{9-\frac{10}{q}}\|f\|_{L^q(B_1)}^2\Big),
 	\end{align}
 	where $N=N(c, l)$.
 	
 	Since any Sobolev norm of harmonic function $v-(v)_{B(x, \gamma\rho)}$ in $B(x, \gamma\rho)$ can be controlled by the $L^p$ norm of it in $B(x, \rho)$ for any  $p\in [1, +\infty]$,  we have
 	\begin{align}\label{3.53}
 		\int_{B(x,\gamma\rho)}|v-(v)_{B(x, \gamma\rho)}|^2 \ dx \leq & N (\gamma\rho)^2\int_{B(x,\gamma\rho)}|\nabla v|^2 \ dx\nonumber\\
 		\leq & N (\gamma\rho)^7\sup_{B(x,\gamma\rho)}|\nabla v|^2\nonumber\\
 		\leq & N\gamma^{7} \int_{B(x,\rho)}|v-(v)_{B(x, \rho)}|^2\ dx,
 	\end{align}
 	where $\gamma\in (0, 1)$ is any constant.
 	
 	By \eqref{3.52} and \eqref{3.53}, one derives
 	\begin{align}\label{3.54}
 		&\int_{B(x,\gamma\rho)}|u-(u)_{B(x, \gamma)\rho}|^2 \ dx\nonumber\\
 		&\leq \int_{B(x,\gamma\rho)}|v-(v)_{B(x, \gamma\rho)}|^2 \ dx+\int_{B(x,\gamma\rho)}|w-(w)_{B(x,\gamma\rho)}|^2 \ dx\nonumber \\
 		&\leq N\Big(\gamma^{7} \int_{B(x,\rho)}|v-(v)_{B(x, \rho)}|^2\ dx+ \rho^{ 5+2l}+  \rho^{7-2l}+\rho^{9-\frac{10}{q}}\|f\|_{L^q(B_1)}^2\Big)\nonumber \\
 		&\leq N\Big(\gamma^{7} \int_{B(x,\rho)}|u-(u)_{B(x, \rho)}|^2\ dx+  \rho^{ 5+2l}+  \rho^{7-2l}+\rho^{9-\frac{10}{q}}\|f\|_{L^q(B_1)}^2\Big),
 	\end{align}
 	where $N=N(c, l)$.
 	
 	Due to  the condition of $l$ in Lemma \ref{lem 3.4}, by  taking a sufficiently small  constant $ l \in \Big(0,  \frac{4q-10}{ 5q-5}\Big)  $,  we have
 	\begin{align*}
 		\min\big\{ 5+2l, 7-2l, 9- 10/q\big\}>5+l.
 	\end{align*}
 	The combination of \eqref{3.54} and Lemma \ref{lem 4.5} implies that
 	\begin{align*}
 		\int_{B(x, \rho)}|u-(u)_{B(x, \rho)}|^2 \ dx\leq N \rho^{5+l}
 	\end{align*}
 	for any  $x\in B_{1/8}$ and $\rho\in \big(0, 3/8\big)$. Hence, by Campanato's characterization of H\"older continuity, $u$ is H\"older continuous in $B_{1/2}$ .
 	
 	\section{Boundary  \texorpdfstring{$\varepsilon$}{}-regularity }
 	In this section, we give the proof Theorem \ref{Them 2}. We first prove the smallness of  $E^+(15/16)$ in Lemma \ref{lem 4.3} by using Lemmas \ref{lem 4.1} and \ref{lem 4.2} and an iteration argument. Then, based on the above results and the condition \eqref{1.4b},  we establish the uniform decay estimates of $A^++E^+$, $G^+,$  and $P^+$ in Lemma \ref{lem 4.4}. Finally, by the  decay estimates of scale invariant quantities and the $L^p$ estimate for elliptic equations, we obtain the H\"older continuity of $u$ by Campanato's characterization of H\"older continuity. Throughout this section, we use  the pair $(u, p)$ to represent a suitable weak solution to the incompressible Navier-Stokes equations \eqref{1.1} with the boundary condition \eqref{1.2}.
 	
 	First, as in the interior case, by using \eqref{1.4b} and \cite[Theorem 3.8]{K}, without loss of generality, we may assume that
 	\begin{align}\label{1.4}
 		\int_{B_1^+}|u|^q\ dx+\int_{B_1^+}|p-(p)_{B_1^+}|\ dx + \int_{B_1^+}|f|^{2}\ dx <\varepsilon,
 	\end{align}
 	
 	In the following  lemmas,  we prove that  the values of $A^++E^+ $ and $ G^+ $ in a smaller half ball can be controlled by their values in a larger half ball.
 	
 	\begin{lemma}\label{lem 4.1}
 		Let $\Omega=B_1^+$. For any $\gamma \in (0,1/2]$,  $x_0\in \partial \Omega\cap\{x_5=0\}$ and $B^+(x_0, \rho)\subset B^+_1$, we have
 		\begin{align*}
 			&A^+(x_0, \gamma\rho)+E^+(x_0, \gamma\rho)\nonumber\\
 			&\leq N\Big[\gamma^2 E^+(x_0, \rho) +\gamma^{-2}E^+(x_0, \rho)^\frac32+ \gamma^{-2}    E^+(x_0, \rho)^\frac 12G^+(x_0, \rho)^\frac{4-c}{5(1+c)}\notag\\
 			&\quad +\gamma^{-4} F^+(x_0, \rho)\Big],
 		\end{align*}
 		where  $N=N(c)$ is a positive constant independent of $\gamma$ and $\rho$.
 	\end{lemma}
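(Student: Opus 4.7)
The plan is to mimic the proof of Lemma \ref{lem 3.1}, but exploit the zero Dirichlet boundary condition on $\{x_5=0\}$ to replace every appearance of $A^+$ on the right-hand side by $E^+$ via Poincar\'e's inequality. As in the interior case, by scale invariance we may assume $\rho=1$. Since $x_0\in \partial\Omega\cap \{x_5=0\}$, the test function $\psi(x)=\Gamma(x)\phi(x)$ with $\Gamma(x)=(\gamma^2+|x-x_0|^2)^{-3/2}$ and $\phi\in C_0^\infty(B(x_0,1))$ equal to $1$ on $B(x_0,1/2)$ satisfies $\psi\in C^\infty(\overline{B_1^+})$ and $\psi=0$ in a neighborhood of $\partial B_1^+\setminus\{x_5=0\}$, so it is admissible in the local energy inequality \eqref{2.1}. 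Using the pointwise bounds \eqref{3.2}--\eqref{3.5}, the same computation as in Lemma \ref{lem 3.1} (now with integration domain $B^+(x_0,1)$ in place of $B(x_0,1)$) yields
\begin{align*}
A^+(x_0,\gamma)+E^+(x_0,\gamma)
&\leq N\Big[\gamma^2 A^+(x_0,1)
+\gamma^{-2}\int_{B^+(x_0,1)}\!\!\big(|u|^2+|p-(p)_{B^+(x_0,1)}|\big)|u|\,dx\\
&\qquad +\gamma^{-4}F^+(x_0,1)\Big].
\end{align*}

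The key new ingredient is the Poincar\'e inequality: because $u\in J(B_1^+,\partial B_1^+\cap\{x_5=0\})$, its trace on $\{x_5=0\}$ vanishes, so
$$\|u\|_{L^2(B^+(x_0,1))}\leq N\|\nabla u\|_{L^2(B^+(x_0,1))},\qquad \text{i.e.,}\qquad A^+(x_0,1)\leq N E^+(x_0,1).$$
This immediately turns the leading $\gamma^2A^+$ into $\gamma^2E^+$, and also boosts the Sobolev embedding to the stronger form $\|u\|_{L^{10/3}(B^+(x_0,1))}\leq N\|\nabla u\|_{L^2(B^+(x_0,1))}$.

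For the cubic term, I would write $\int|u|^3\leq \|u\|_{L^{10/3}}^{5/2}\|u\|_{L^2}^{1/2}$ and apply the two inequalities above to obtain $\int_{B^+(x_0,1)}|u|^3\,dx\leq N\, E^+(x_0,1)^{3/2}$. For the pressure term, Hölder with conjugate exponents $\tfrac{5(1+c)}{4-c}$ and $\tfrac{5(1+c)}{1+6c}$ gives
$$\int_{B^+(x_0,1)}\!\!|p-(p)_{B^+(x_0,1)}|\,|u|\,dx\leq G^+(x_0,1)^{\frac{4-c}{5(1+c)}}\Big(\int_{B^+(x_0,1)}|u|^{\frac{5(1+c)}{1+6c}}\,dx\Big)^{\frac{1+6c}{5(1+c)}}.$$
Interpolating the $L^{5(1+c)/(1+6c)}$-norm between $L^2$ and $L^{10/3}$ (valid since $c\in(1/9,1/4)$ when $q\in(5/2,10/3)$) and again invoking Poincar\'e collapses the mixed $A^+,E^+$-product into the single power $E^+(x_0,1)^{1/2}$, yielding the term $\gamma^{-2}E^+(x_0,1)^{1/2}G^+(x_0,1)^{(4-c)/(5(1+c))}$.

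The argument is essentially routine modulo two points I would want to verify carefully. The main potential obstacle is confirming that the Poincar\'e inequality applies on every half-ball $B^+(x_0,\rho)$ with $x_0\in\{x_5=0\}$ with a constant that behaves correctly under the scaling \eqref{eq10.23}; this is clear because such half-balls are affinely equivalent to $B_1^+$ and the vanishing trace on the flat portion is preserved. A secondary bookkeeping issue is to check that the exponent $(9c-1)/(2(1+c))$ arising in the interpolation between $L^2$ and $L^{10/3}$ is nonnegative, which reduces to $c\geq 1/9$; this is ensured by the hypothesis $c>(5-q)/(6q-5)$ together with $q<10/3$, which forces $(5-q)/(6q-5)>1/9$.
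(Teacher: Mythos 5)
Your proposal is correct and takes essentially the same approach as the paper: apply the Lemma \ref{lem 3.1} energy-inequality argument with the test function $\Gamma\phi$ on the half-ball and then use the boundary Poincar\'e inequality $A^+(x_0,1)\le N E^+(x_0,1)$ (the paper's \eqref{BP}) to convert every $A^+$ into $E^+$. The only cosmetic difference is that you invoke Poincar\'e (and the resulting zero-trace Sobolev embedding) term by term in the cubic and pressure estimates, whereas the paper first writes the full half-ball analogue \eqref{AE} of \eqref{3.1} and substitutes \eqref{BP} at the end; your check that $c>1/9$ makes the interpolation exponent admissible is the same constraint implicit in the paper's exponents.
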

 	\begin{proof}
 		By the scale invariant property, we assume $\rho=1$.
 		Similar to the proof of Lemma \ref{lem 3.1}, we   have
 		\begin{align}\label{AE}
 			&A^+(x_0, \gamma)+E^+(x_0, \gamma)\nonumber\\
 			&\leq N\Big[\gamma^2 A^+(x_0, 1)+\gamma^{-2}A^+(x_0, 1)^\frac14E^+(x_0, 1)^\frac{5}{4}+\gamma^{-2}A^+(x_0, 1)^\frac32\notag\\
 			&\quad + \gamma^{-2} A^+(x_0, 1)^\frac{9c-1}{4(1+c)}   E^+(x_0, 1)^\frac{3-7c}{4(1+c)}G^+(x_0, 1)^\frac{4-c}{5(1+c)}\notag\\
 			&\quad +\gamma^{-2}A^+(x_0, 1)^\frac12G^+(x_0, \rho)^\frac{4-c}{5(1+c)}+\gamma^{-4} F^+(x_0, 1)\Big].
 		\end{align}
 		Inserting the boundary Poincar\'e inequality
 		\begin{align}\label{BP}
 			A^+(x_0, 1)\leq N E^+(x_0, 1)
 		\end{align}
 		into \eqref{AE}, we obtain the lemma.
 	\end{proof}
 	\begin{lemma}\label{lem 4.2}
 		Let $\Omega=B_1^+.$   For any  $\gamma\in (0,3/4]$, $x_0\in \partial \Omega\cap\{x_5=0\}$ and $B^+(x_0, \rho)\subset B_1^+$, we have
 		\begin{align*}
 			&G^+(x_0, \gamma\rho)\leq N \gamma^{-\frac{5(2-3c)}{4-c}}\Big[ E^+(x_0, \rho)^{\frac{5(1+c)}{4-c}}
 			+ F^+(x_0, \rho)^{\frac{5(1+c)}{2(4-c)}}\\
 			&\quad + \gamma^{\frac{25}{4-c}-\frac{25(1+c)}{r'(4-c)}}\Big( E^+(x_0, \rho)^{\frac{5(1+c)}{2(4-c)}} +P^+(x_0, \rho)^{\frac{5(1+c)}{4-c}} \Big)\Big],
 		\end{align*}
 		where $r'\in (1+c, +\infty)$ is any sufficiently large  constant and $N=N(c, r')$ is some positive constant.
 	\end{lemma}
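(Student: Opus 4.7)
By the scale invariance of the relevant quantities under \eqref{eq10.23}, I may assume $\rho=1$. Following Seregin (see \cite{Se3}) and using the boundary $L^p$-regularity theory for the linear Stokes system \cite{Ga94,MS,Se2,K}, I decompose the pressure as $p=p_1+p_2$. Fix a cutoff $\eta\in C_0^\infty\bigl(B(x_0,7/8)\bigr)$ with $\eta\equiv 1$ on $B(x_0,3/4)$ and let $(V,p_1)\in J(\mathbb R^5_+,\partial\mathbb R^5_+)\times L^s(\mathbb R^5_+)$ solve the half-space Stokes system
\begin{align*}
-\Delta V+\nabla p_1=f\eta-\dv\bigl((u\otimes u)\eta\bigr),\qquad \dv V=0,\qquad V|_{x_5=0}=0.
\end{align*}
Setting $p_2=p-p_1$, the pair $(u-V,p_2)$ then satisfies the homogeneous Stokes system in $B^+(x_0,3/4)$ with zero Dirichlet condition on $\{x_5=0\}\cap B(x_0,3/4)$.

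For $p_1$, with $s=5(1+c)/(4-c)$, the boundary $L^s$-estimate on $\mathbb R^5_+$ gives
\begin{align*}
\|p_1\|_{L^s(\mathbb R^5_+)}\le N\bigl(\|u\otimes u\|_{L^s(B^+(x_0,1))}+\|f\eta\|_{W^{-1,s}(\mathbb R^5_+)}\bigr).
\end{align*}
Since $u=0$ on $\{x_5=0\}\cap B(x_0,1)$, the boundary Sobolev--Poincar\'e inequality yields $\|u\|_{L^{10/3}(B^+(x_0,1))}\le N E^+(x_0,1)^{1/2}$; as $c<1/4$ gives $2s\le 10/3$, H\"older's inequality then produces $\|u\otimes u\|_{L^s(B^+(x_0,1))}^s\le N E^+(x_0,1)^s$. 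Using $L^{1+c}\hookrightarrow W^{-1,s}(\mathbb R^5_+)$ (a direct Sobolev computation in dimension five giving the dual exponent $1+c$) together with H\"older, $\|f\eta\|_{W^{-1,s}}\le N\|f\|_{L^{1+c}(B^+(x_0,1))}\le N F^+(x_0,1)^{1/2}$. Hence
\begin{align*}
\int_{B^+(x_0,1)}|p_1|^s\,dx\le N\bigl(E^+(x_0,1)^s+F^+(x_0,1)^{s/2}\bigr),
\end{align*}
which produces the first two ($\gamma$-independent) terms in the target estimate.

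For $p_2$, boundary regularity for homogeneous Stokes systems \cite{Ga94,MS,Se2} implies that $(u-V,p_2)$ is smooth up to the flat portion of the boundary inside $B^+(x_0,5/8)$, and for any $r'\in(1+c,\infty)$ one has a local estimate of the form
\begin{align*}
\|\nabla p_2\|_{L^{r'}(B^+(x_0,5/8))}\le N(r')\Bigl(\|p_2-(p_2)_{B^+(x_0,3/4)}\|_{L^1(B^+(x_0,3/4))}+\|u-V\|_{L^2(B^+(x_0,3/4))}\Bigr).
\end{align*}
Choosing $r'>5$, Morrey's embedding $W^{1,r'}(B^+(x_0,5/8))\hookrightarrow C^{0,1-5/r'}$ applied to $p_2$ gives, for any $\gamma\le 5/8$,
\begin{align*}
\int_{B^+(x_0,\gamma)}|p_2-(p_2)_{B^+(x_0,\gamma)}|^s\,dx\le N\gamma^{5+s(1-5/r')}\|\nabla p_2\|_{L^{r'}(B^+(x_0,5/8))}^s,
\end{align*}
and the direct computation $5+s(1-5/r')=\frac{25}{4-c}-\frac{25(1+c)}{r'(4-c)}$ matches the desired exponent of $\gamma$. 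I then split $\|p_2-(p_2)_{B^+(x_0,3/4)}\|_{L^1}\le\|p-(p)_{B^+(x_0,3/4)}\|_{L^1}+\|p_1\|_{L^1}$, bounding the first by $P^+(x_0,1)$ and the second by $\|p_1\|_{L^s}$ via H\"older, and estimate $\|u-V\|_{L^2}\le N\bigl(E^+(x_0,1)^{1/2}+\|V\|_{L^2}\bigr)$ with $\|V\|_{L^2}$ bounded via the Stokes energy estimate in terms of $E^+(x_0,1)$ and $F^+(x_0,1)^{1/2}$. Collecting everything, raising to the power $s$ and multiplying by $\gamma^{-(10-15c)/(4-c)}$ yields the target inequality, with the higher-order $E^+$ and $F^+$ contributions absorbed into the first bracket.

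The main obstacle compared to the interior Lemma~\ref{lem 2} is the absence of a harmonic-function argument at the boundary. In the interior case, the good part of the pressure is genuinely harmonic, so one has the clean mean-value bound $\|\nabla\tilde h\|_{L^\infty}\le N\|\tilde h-c\|_{L^1}$ used in \eqref{3.10}. Here $p_2$ is no longer harmonic---it is coupled to $u-V$ through the homogeneous Stokes system---and the boundary Stokes theory only provides an $L^{r'}$ gradient bound on a smaller half-ball. This is what forces the extra factor $\gamma^{-25(1+c)/(r'(4-c))}$ and the restriction $r'\in(1+c,\infty)$, with the further choice $r'>5$ needed to apply Morrey.
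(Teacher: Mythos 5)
Your argument follows the same basic route as the paper: a Seregin-type decomposition of the pressure through a linear Stokes solve, followed by local boundary regularity for the homogeneous Stokes system with a large exponent $r'$ to gain the positive power of $\gamma$. (The paper solves the inhomogeneous problem on a smooth bounded domain $\tilde B$ with $B^+_{3/4}\subset\tilde B\subset B_1^+$ at the $W^{1,1+c}$ level via Galdi's Theorem 6.1, and converts $\|\nabla p\|_{L^{1+c}}$ into the $L^{5(1+c)/(4-c)}$ oscillation by Sobolev--Poincar\'e plus a H\"older step $L^{r'}\to L^{1+c}$ on $B^+_\gamma$; you solve in the half space with a cutoff at the $L^{s}$, divergence-form level and use Morrey with $r'>5$. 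Both give the same $\gamma$ exponent, and your exponent bookkeeping, including $5+s(1-5/r')=\frac{25}{4-c}-\frac{25(1+c)}{r'(4-c)}$ and the use of $2s\le 10/3$ for $c<1/4$, is correct.)

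There is, however, one step that fails as written: the claim that $\|V\|_{L^2}$ is ``bounded via the Stokes energy estimate in terms of $E^+(x_0,1)$ and $F^+(x_0,1)^{1/2}$.'' The energy ($L^2$) estimate for your half-space problem requires testing $\dv\bigl((u\otimes u)\eta\bigr)$ against $\nabla V\in L^2$, i.e.\ it needs $u\otimes u\in L^2$, equivalently $u\in L^4$; in five dimensions $u\in H^1$ only gives $u\in L^{10/3}$, so $u\otimes u\in L^{5/3}$ and the energy estimate does not close --- this supercriticality is exactly the point of the whole paper, so the step cannot be waved through. The conclusion you need is nevertheless recoverable within your own framework: the half-space $L^s$ theory you already invoked gives $\|\nabla V\|_{L^s(\mathbb R^5_+)}\le N\bigl(E^+(x_0,1)+F^+(x_0,1)^{1/2}\bigr)$, and Sobolev embedding yields $V\in L^{s^*}$ with $s^*=\frac{5(1+c)}{3-2c}>2$, since $c>\frac{5-q}{6q-5}>\frac19$ for $q\in(5/2,10/3)$; hence $\|V\|_{L^2(B^+_{3/4})}$ is controlled and, after raising to the power $s$, lands in the first bracket as you intended. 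Alternatively, follow the paper's \eqref{4.3}--\eqref{4.4} and use $L^{1+c}$-type low-order norms of $u-V$ in the local boundary Stokes estimate, which avoids $L^2$ altogether. A second, minor point: your cutoff ($\eta\equiv1$ on $B(x_0,3/4)$) and Morrey ball $B^+(x_0,5/8)$ only cover $\gamma\le 5/8$, whereas the lemma is stated for $\gamma\in(0,3/4]$; this is fixed by enlarging the radii (e.g.\ $\eta\equiv1$ on $B(x_0,7/8)$ and the $W^{1,r'}$ estimate on $B^+(x_0,13/16)$), at no cost to the argument.
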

 	\begin{proof}
 		By  the scale invariant property, we may suppose $\rho=1$ and  $x_0=(0,\dots, 0)$.  Let    $\tilde B\subset \mathbb{R}^5$ be a fixed domain with smooth boundary  which satisfies
 		\begin{align*}
 			B^+_{3/4}\subset \tilde B\subset B^+_1.
 		\end{align*}
 		Similar to  \eqref{3.15}, by H\"older's inequality, we have
 		\begin{align}\label{4.1}
 			\| u\cdot\nabla u\|_{L^{1+c}(B_1^+)}\leq&  N \Big(\|u\|_{L^2(B_1^+)}^{\frac{1-4c}{1+c}}\|\nabla u\|_{L^2(B_1^+)}^{\frac{1
 					+6c}{1+c}}+\|u\|_{L^2(B_1^+)}\|\nabla u\|_{L^2(B_1^+)}\Big)
 		\end{align}
 		and
 		\begin{align*}
 			\| f\|_{L^{1+c}(B_1^+)}\leq N\| f\|_{L^2(B_1^+)},
 		\end{align*}
 		where $N>0$ is some constant.
 		
 		Using  Theorem 6.1 of \cite{Ga94}, there exists a unique solution $(v, p_1)$, which satisfies the following equations
 		\begin{align*}
 			\begin{cases}
 				-\Delta v+\nabla p_1=-u\cdot\nabla u +f \quad \mathrm{in}\ \tilde B,\\
 				\nabla \cdot v=0 \quad \mathrm{in}\  \tilde B,\\
 				(p_1)_{\tilde B}=0 ,\\
 				v=0 \quad \mathrm{on}\ \partial  \tilde B.\\
 			\end{cases}
 		\end{align*}
 		Moreover, we have the following estimate
 		\begin{align}\label{4.3}
 			&\|v \|_{L^{1+c}(\tilde B)}+    \|\nabla  v \|_{L^{1+c}(\tilde B)}+\|p_1\|_{L^{1+c}(\tilde B)}+\|\nabla p_1\|_{L^{1+c}(\tilde B)}\nonumber\\
 			&\leq N\Big( \| u\cdot\nabla u\|_{L^{1+c}(\tilde B)}+ \| f\|_{L^{1+c} (\tilde B)}\Big)\nonumber\\
 			&\leq N\Big(\|u\|_{L^2(B_1^+)}^{\frac{1-4c}{1+c}}\|\nabla u\|_{L^2(B_1^+)}^{\frac{1
 					+6c}{1+c}} +\|u\|_{L^2(B_1^+)}\|\nabla u\|_{L^2(B_1^+)}+  \| f\|_{L^2(B_1^+)}\Big),
 		\end{align}
 		where we use  H\"older's inequality and the Sobolev embedding inequality in the last inequality,  $N=N(c)$.
 		
 		Let $w=u-v$ and $p_2=p-p_1$. It is easy  to see  $(w ,p_2)$ satisfies
 		\begin{align*}
 			\begin{cases}
 				-\Delta w+\nabla p_2=0\\
 				\nabla \cdot w=0
 			\end{cases}
 			\quad \mathrm{in}\ \tilde B
 		\end{align*}
 		and the boundary condition
 		\begin{align*}
 			w=0 \quad \mathrm{on}\  \{x\ : \ x=(x^1,\dots, x^4, 0)\ \}\cap \partial\tilde B.
 		\end{align*}
 		By \eqref{4.3} and the stationary case of the estimate of pressure in Theorem 1.2 of \cite{Se2}  (see also \cite{K}), one has
 		\begin{align}\label{4.4}
 			&\|\nabla p_2\|_{L^{1+c} (B_{\gamma}^+)}\leq N \gamma^{\frac{5}{1+c}-\frac{5}{r'}}\|\nabla p_2\|_{L^{r'} (B_{\gamma}^+)}\nonumber\\
 			&\leq N\gamma^{\frac{5}{1+c}-\frac{5}{r'}}\Big(\|w\|_{ L^{1+c}(B^+ _{ 3/4})}+\|\nabla w\|_{L^{1+c}( B^+ _{3/4})}+\|p_2-(p)_{B^+_1}\|_{L^{1}(B _{ 3/4}^+)}\Big)\nonumber\\
 			&\leq N\gamma^{\frac{5}{1+c}-\frac{5}{r'}}( \|u\|_{L^{1+c}(B^+ _{ 3/4})}+\|v\|_{L^{1+c}( B ^+_{3/4})}+\|\nabla u\|_{L^{1+c}(B _{ 3/4}^+)} \nonumber\\
 			&\quad +\| \nabla v \|_{L^{1+c}(B _{ 3/4}^+)} +\| p-(p)_{B^+_1}\|_{L^{1}(B _{ 3/4}^+)}+\|p_1\|_{L^1(B_{ 3/4}^+)}\Big)\nonumber\\
 			&\leq N\gamma^{\frac{5}{1+c}-\frac{5}{r'}}\Big(\|u\|_{L^{2}(B_1^+)}+  \|\nabla u\|_{L^2(B_1^+)}+\| p-(p) _{B^+_1}\|_{L^{1}(B_1^+)} \nonumber\\
 			&\quad +  \|u\|_{L^2(B_1^+)}^{\frac{1-4c}{1+c}}\|\nabla u\|_{L^2(B_1^+)}^{\frac{1
 					+6c}{1+c}}+\|u\|_{L^2(B_1^+)}\|\nabla u\|_{L^2(B_1^+)}+\ \| f\|_{L^2(B_1^+)} \Big),
 		\end{align}
 		where  $r'\in (1+c, +\infty)$ and $N=N(c, r')$ are  some positive constants.
 		
 		Combining  \eqref{4.3} and \eqref{4.4}, by the Sobolev-Poincar\'e  inequality, we have
 		\begin{align*}
 			\big\|  p&-(p)_{B^+_\gamma}\big\|_{L^\frac{5(1+c)}{4-c}(B_\gamma^+ )}\\
 			\leq& N\Big[\|\nabla p_1\|_{L^{1+c}(B_\gamma^+)}+\|\nabla p_2\|_{L^{1+c}(B_\gamma^+)}\Big]\\
 			\leq& N\Big[ \|u\|_{L^2(B_1^+)}^{\frac{1-4c}{1+c}} \|\nabla u\|_{L^2(B_1^+)}^{\frac{1
 					+6c}{1+c}}+\|u\|_{L^2(B_1^+)}\|\nabla u\|_{L^2(B_1^+)}+  \| f\|_{L^2(B_1^+)}\\
 			&+ \gamma^{\frac{5}{1+c}-\frac{5}{r'}} \Big(\|u\|_{L^{2}(B_1^+)}+ \|\nabla u\|_{L^2(B_1^+)} +\| p-(p)_{B_1^+}\|_{ L^ 1(B_1^+)}\Big)  \Big],
 		\end{align*}
 		where $N=N(c, r')$.
 		Hence, we get
 		\begin{align*}
 			& G^+(x_0, \gamma)\leq N \gamma^{-\frac{5(2-3c)}{4-c}}\Big[A^+(x_0, 1)^{\frac{5(1-4c)}{2(4-c)}}E^+(x_0, 1)^{\frac{5(1+6c)}{2(4-c)}}\\
 			&\quad +A^+(x_0, 1)^\frac{5(1+c)}{2(4-c)}E^+(x_0, 1)^\frac{5(1+c)}{2(4-c)}+ F^+(x_0, 1)^{\frac{5(1+c)}{2(4-c)}}\\
 			&\quad + \gamma^{\frac{25}{4-c}-\frac{25(1+c)}{r'(4-c)}}\Big(A^+(x_0, 1)^{\frac{5(1+c)}{2(4-c)}}+E^+(x_0, 1)^{\frac{5(1+c)}{2(4-c)}} +P^+(x_0, 1)^{\frac{5(1+c)}{4-c}} \Big)\Big].
 		\end{align*}
 		Inserting the boundary Poincar\'e inequality  \eqref{BP} into  the above inequality,
 		the conclusion of Lemma \ref{lem 4.2} follows immediately.
 	\end{proof}
 	
 	By  Lemmas \ref{lem 4.1} and \ref{lem 4.2},  we show below that $E^+(15/16)$ can be sufficiently small   under the condition  \eqref{1.4}. This lemma is needed for us to prove the decay estimates of   scale invariant quantities with respect to the  radius.
 	\begin{lemma}\label{lem 4.3}
 		Let $\Omega=B_1^+$ and $(u,p)$ be a suitable weak solution to \eqref{1.1}  satisfying the condition of Theorem \ref{Them 2}. Then  we have
 		\begin{align*}
 			E^+(15/16)\leq   N\varepsilon^\beta,
 		\end{align*}
 		where $   N, \beta$ are some positive  constants which depend only on $c$ and $q$.
 	\end{lemma}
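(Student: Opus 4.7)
The plan is to mirror the interior argument of Lemma \ref{lem 3.3}, substituting the boundary ingredients where needed: a dyadic sequence whose nested half balls fill out $B^+_{15/16}$; the boundary Poincar\'e inequality $A^+(\rho)\le NE^+(\rho)$ in place of the Hölder estimate \eqref{AC}; and, most importantly, the Seregin-type Stokes decomposition of the pressure used in Lemma \ref{lem 4.2} in place of the Calder\'on--Zygmund bound from the interior case. Set $\rho_k = 1 - 2^{-(k+4)}$, $B_k^+ = B^+_{\rho_k}$, and choose cutoffs $\psi_k \in C_0^\infty(B_{k+1})$ with $\psi_k = 1$ on $B_k$ and $|D^j\psi_k|\le N 2^{jk}$ for $j=1,2$; these need not vanish on the flat boundary since $u$ itself does. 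Applying the local energy inequality \eqref{2.1} with $\psi_k$ yields
\[
\int_{B_k^+}|\nabla u|^2\,dx \le N\int_{B_{k+1}^+}\bigl(2^k|u|^3 + 2^k|p - (p)_{B_{k+1}^+}||u| + 2^{2k}|u|^2 + |f||u|\bigr)\,dx.
\]

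The cubic, quadratic, and force terms can be treated exactly as in \eqref{3.12}, \eqref{3.17}, \eqref{3.18}: Hölder interpolation plus the Sobolev embedding produce a finite sum of terms of the form $\varepsilon^{a_i} E^+(\rho_{k+1})^{b_i}$ with $a_i > 0$ and $b_i \in (0,1)$, the factors of $\varepsilon$ coming from the smallness of $C^+(1)$ in \eqref{1.4}, and the boundary Poincar\'e inequality absorbing every occurrence of $A^+$. The pressure--velocity term is the real issue. Here I would replicate the construction from the proof of Lemma \ref{lem 4.2}: fix a smooth domain $\tilde B_k$ with $B^+_{\rho_{k+1}} \subset \tilde B_k \subset B^+_{\rho_{k+2}}$, split $p = p_1 + p_2$ via the inhomogeneous and homogeneous Stokes systems on $\tilde B_k$, bound $\|\nabla p_1\|_{L^{1+c}}$ by \eqref{4.3}, and bound $\|\nabla p_2\|_{L^{1+c}}$ by the boundary Stokes estimate \eqref{4.4}. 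Combining with Hölder and Sobolev exactly as in \eqref{3.13}, the pressure--velocity integral is controlled by $2^{Ck}\sum_i \varepsilon^{a_i}E^+(\rho_{k+2})^{b_i}$, again with each $b_i \in (0,1)$; the restrictions $q>5/2$ and $c\in\bigl(\frac{5-q}{6q-5},\frac14\bigr)$ are exactly what keeps all $a_i$ strictly positive.

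Assembling everything and applying Young's inequality to each sublinear power $\varepsilon^{a_i}E^+(\rho_{k+2})^{b_i}$ produces
\[
E^+(\rho_k) \le \delta^2 E^+(\rho_{k+2}) + 2^{C'k} N \varepsilon^\beta
\]
for arbitrarily small $\delta>0$. Picking $\delta$ so that $\delta\cdot 2^{C'}<1$, multiplying by $\delta^k$, and summing over $k\ge 1$ telescopes exactly as in the closing lines of the proof of Lemma \ref{lem 3.3}, producing $E^+(15/16)\le N\varepsilon^\beta$. The main obstacle is the exponent bookkeeping in the Stokes decomposition: one must verify that every $a_i$ arising from the interplay of \eqref{4.3}, \eqref{4.4}, the Sobolev embedding, and Hölder's inequality is strictly positive, and that every $b_i$ is strictly less than one, uniformly in $k$, under the given constraints on $q$ and $c$. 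The boundary Poincar\'e inequality is exactly what allows this to work starting from smallness of only $C^+$ and $F^+$, without assuming any a priori smallness on $A^+$ or $E^+$.
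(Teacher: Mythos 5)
Your overall architecture is the paper's: dyadic half-balls, the local energy inequality, a Seregin/Stokes-type decomposition of the pressure on intermediate smooth domains (this is exactly how the paper handles the term $\int|p-(p)||u|$, via the analogues of \eqref{4.10}--\eqref{4.11}), then Young's inequality and a telescoping sum with $\delta^k$ beating the $2^{Ck}$ growth. But there is one substantive deviation, and it breaks the argument as you state it: you propose to replace the H\"older estimate \eqref{AC} by the boundary Poincar\'e inequality $A^+\le NE^+$ and to let Poincar\'e ``absorb every occurrence of $A^+$.'' If you run the cubic-term computation ``exactly as in \eqref{3.12}'' and then substitute $A^+\le NE^+$, the resulting power of $E^+$ is $\frac{q+3aq-10a}{4q}+\frac{5(q-aq+2a)}{4q}=\frac{3-a}{2}>1$ for the admissible range $a\in\big(\frac{q}{5q-10},1\big)$; similarly, in the pressure term the combined exponent becomes $\frac{1-4c}{2(1+c)}+\frac{1+6c}{2(1+c)}+\frac{q+6qc-5-5c}{5(1+c)(q-2)}>1$. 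A bound of the form $\varepsilon^{a_i}E^+(\rho_{k+2})^{b_i}$ with $b_i>1$ cannot be absorbed by Young's inequality into $\delta\,E^+(\rho_{k+2})+N\varepsilon^\beta$ with constants independent of the solution, because $E^+$ is only known to be finite (it is exactly the quantity whose smallness the lemma is trying to establish); an inequality like $x\le \varepsilon x^{3/2}+\varepsilon$ does not force $x$ to be small. So your claim that all $b_i\in(0,1)$ is not what your stated mechanism produces, and the iteration collapses at the Young/absorption step.

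The fix is what the paper does, and it is simpler than your substitution: the smallness of $A^+$ comes from H\"older against $C^+$, namely $A^+(\rho_{k+1})\le NC^+(\rho_{k+1})^{2/q}\le N\varepsilon^{2/q}$ (the boundary analogue \eqref{AC2} of \eqref{AC}, which needs no boundary condition at all). Keeping $A^+$ and bounding it by $\varepsilon^{2/q}$ leaves only the $E^+$ factors with exponents $\frac{5(q-aq+2a)}{4q}$, $\frac{1+6c}{2(1+c)}$, $\frac12$, $\frac{5(q-aq+2a)}{6q}$, all strictly in $(0,1)$ for $a\in\big(\frac{q}{5q-10},1\big)$ and $c\in\big(\frac{5-q}{6q-5},\frac14\big)$, which is precisely where $q>5/2$ enters. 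The boundary Poincar\'e inequality is used in the paper only inside Lemmas \ref{lem 4.1} and \ref{lem 4.2} (to remove $A^+$ from those statements), not in the iteration of Lemma \ref{lem 4.3}. Two smaller remarks: in the iteration you should quote the $k$-dependent Stokes estimates (the stationary cases of Lemmas 4.4 and 4.5 of \cite{HK}, with constants $2^{bk}$ reflecting the shrinking gap between $\tilde B_k$ and $B^+_{k+2}$) rather than \eqref{4.4}, which is tied to the fixed geometry of Lemma \ref{lem 4.2}; and since the pressure part $h_k$ of the decomposition is estimated on a yet larger half-ball, the recursion naturally relates $E^+(\rho_k)$ to $E^+(\rho_{k+3})$ rather than $E^+(\rho_{k+2})$ — harmless, but the paper's choice $\rho_k=1-2^{-4k}$ (so that $\rho_1=15/16$) makes the final telescoping give exactly $E^+(15/16)\le N\varepsilon^\beta$.
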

 	\begin{proof}
 		Let    $\rho_k=1-2^{-4k}$  and $B_k^+=B^+_{\rho_k}$, where $k$ is any positive integer. We choose the domains $\tilde B_k$ satisfying
 		\begin{align*}
 			B_{k+1}^+\subset\tilde B_k\subset B_{k+2}^+.
 		\end{align*}
 		
 		For each $k$, we choose  cut-off functions $\psi_k$ such that
 		\begin{align*}
 			\mathrm{supp}\ \psi_k\subset B_{k+1}, \quad \psi_k=1\quad \mathrm{in} \ B_k,\\
 			| D\psi_k|\leq N 2^k, \quad |D^2\psi_k|\leq N 2^{2k}\quad  \mathrm{in}\ B_{k+1}^+.
 		\end{align*}
 		
 		By the energy inequality \eqref{2.1},  we have
 		\begin{align} \label{4.5}
 			&2\int_{B_k^+}|\nabla u|^2  \ dx\notag\\
 			&\leq N \int_{B_{k+1}^+}\Big(2^k|u|^3+2^{k+1}|p-(p)_{B^+_{k+1}}||u|  +2^{2k}|u|^2+2|f||u| \Big) \ dx.
 		\end{align}
 		
 		Next we estimate each term on the right-hand side of the above inequality.
 		
 		By    \eqref{1.4} and H\"older's inequality, we have
 		\begin{align}\label{AC2}
 			A^+(\rho_{k+1})\leq N C^+(\rho_{k+1})^\frac{2}{q}\leq NC^+(1)^\frac{2}{q}\leq N \varepsilon^\frac{2}{q}.
 		\end{align}

 		For the first  term on the right-hand side of \eqref{4.5},
 		similar to  \eqref{3.12}, by  H\"older's inequality, the Sobolev embedding  inequality and \eqref{AC2},  we  derive that
 		\begin{align}\label{4.6}
 			\int_{B_{k+1}^+}|u|^3\ dx\leq& N\Big(A^+(\rho_{k+1})^\frac{q+3aq-10a}{4q}C^+(\rho_{k+1})^\frac{a}{q}E^+(\rho_{k+1})^\frac{5(q-aq+2a)}{4q}\nonumber\\
 			&+A^+(\rho_{k+1})^\frac{3-a}{2}C^+(\rho_{k+1})^\frac{a}{q}\Big)\nonumber\\
 			\leq& N\Big(\varepsilon^{\frac{q+5aq-10a}{2q^2}}E^+(\rho_{k+1}
 			)^\frac{5(q-aq+2a)}{4q}+\varepsilon^{\frac{3}{q}}\Big),
 		\end{align}
 		where $N=N(a, q)$. Here, we take the constant  $a\in (0,1)$ to make sure all the exponents in  the  above  inequality are positive.
 		
 		For the third term on the right-hand side of \eqref{4.5}, similar  to    \eqref{3.17} in  Lemma \ref{lem 3.3}, we  have
 		\begin{align}\label{4.7}
 			\int_{B_{k+1}^+}|u|^2\ dx \leq& N  \Big(A^+(\rho_{k+1})^{\frac{q+3aq-10a}{6q}}C^+(\rho_{k+1})^{\frac{2a}{3q}}E^+(\rho_{k+1})^{\frac{5(q-aq+2a)}{6q}}\nonumber\\
 			&+ A^+(\rho_{k+1})^\frac{3-a}{3}C^+(\rho_{k+1})^\frac{2a}{3q}\Big)\nonumber\\
 			\leq & N \Big(\varepsilon^{\frac{q+5aq-10a}{3q^2}}E^+(\rho_{k+1})^\frac{5(q-aq+2a)}{6q}+\varepsilon^{\frac{2}{q}}\Big),
 		\end{align}
 		where $N=N(a, q)$.

 		For the last term on the right-hand side of \eqref{4.5},  by H\"older's inequality, we have
 		\begin{align}\label{4.8}
 			\int_{B_{k+1}^+}|f||u| \ dx\leq N   A^+(\rho_{k+1})^\frac12F^+(\rho_{k+1})^\frac12\leq N  \varepsilon^{\frac12+\frac{1}{q}}.
 		\end{align}

 		Next we  bound the second term on the right-hand side of \eqref{4.5}. Analogous to \eqref{3.13}, we have
 		\begin{align}\label{4.9}
 			\int_{B_{k+1}^+} |p-(p)_{B^+_{k+1}}|  |u| \ dx\leq& \Big(\int_{B_{k+1}^+} |\nabla p  |^{1+c} \ dx\Big)^\frac{1}{1+c}\Big(\int_{B_{k+1}^+}|u|^2 \ dx\Big)^\frac{q+6cq-5-5c}{5(1+c)(q-2)}\nonumber\\
 			&\cdot  \Big(\int_{B_{k+1}^+}|u|^q \ dx\Big)^\frac{3-7c}{5(1+c)(q-2)},
 		\end{align}
 		where  the exponents $\frac{q+6qc-5-5c}{5(1+c)(q-2)}$ and $\frac{3-7c}{5(1+c)(q-2)}$ are positive due to the fact  $ c\in \big(\frac{5-q}{6q-5}, \frac14\big)$.
 		
 		To deal with the first term on the right-hand side of  \eqref{4.9}, we decompose  the velocity $u$  and the pressure $p$ as follows
 		\begin{align*}
 			u=v_k+w_k,\quad p=p_k+h_k,
 		\end{align*}
 		where  $(v_k, p_k)$ satisfy the boundary value problem
 		\begin{align*}
 			\begin{cases}
 				-\Delta v_k+\nabla p_k=-u\cdot\nabla u +f \quad \mathrm{in}\ \tilde B_k,\\
 				\nabla \cdot v_k=0 \quad \mathrm{in}\  \tilde B_k,\\
 				(p_k)_{\tilde B_k}=0 ,\\
 				v_k=0 \quad \mathrm{on}\ \partial  \tilde B_k.\\
 			\end{cases}
 		\end{align*}
 		By  the assumptions in Theorem \ref{Them 2}, \eqref{4.1} and the  stationary case of the estimate in Lemma 4.4  of \cite{HK},   one has
 		\begin{align} \label{4.10}
 			&\big\||v_k|+|p_k|+|\nabla p_k|\big\|_{L^{1+c}(\tilde B_k)}\nonumber\\
 			&\leq 2^{bk}N\Big[\|u\cdot\nabla u\|_{L^{1+c}(\tilde B_k)}+ \| f\|_{L^{1+c}(\tilde B_k)}\Big]\nonumber\\
 			&\leq  2^{bk} N\Big[\|u\|_{L^2(\tilde B_k)}^{\frac{1-4c}{1+c}}\|\nabla u\|_{L^2(\tilde B_k)}^{\frac{1
 					+6c}{1+c}} + \|u\|_{L^2(\tilde B_k)}\|\nabla u\|_{L^2(\tilde B_k)}+   \| f\|_{L^2(\tilde B_k)}\Big]\nonumber\\
 			&\leq 2^{bk}N\Big[
 			A^+(\rho_{k+2})^\frac{1-4c}{2(1+c)}E^+(\rho_{k+2})^\frac{1+6c}{2(1+c)}+ A^+(\rho_{k+2})^\frac12E^+(\rho_{k+2})^{\frac12} + F^+(\rho_{k+2})^\frac12\Big],
 		\end{align}
 		where  the positive constants  $b$ and $N$ depend only on $c$.
 		
 		Since  $(w_k, h_k)$ is a suitable weak solutions to the following equations
 		\begin{align*}
 			\begin{cases}
 				-\Delta w_k+\nabla h_k=0 \quad \mathrm{in}\   \tilde B_k,\\
 				\nabla\cdot w_k=0\quad \mathrm{in}\ \tilde  B_k,\\
 				w_k=0 \quad \mathrm{on}\ \partial  B_k\cap \{x^5= 0\},
 			\end{cases}
 		\end{align*}
 		by \eqref{4.10} and  the stationary case of  the estimate in  Lemma 4.5 of \cite{HK},  we have
 		\begin{align}\label{4.11}
 			&\| h_k-(h_k)_{B^+_{k+1}}\|_{L^{1+c}(B_{k+1}^+)}+\|\nabla h_k\|_{L^{1+c}(B_{k+1}^+)}\nonumber\\
 			& \leq  2^{bk}N\Big(\|w\|_{L^1(B_{k+2}^+)}+\|h_k-(h_k)_{B^+_{k+2}}\|_{L^1(B_{k+2}^+)}\Big)\nonumber\\
 			&\leq  2^{bk}N\Big(\|u\|_{L^1(B_{k+2}^+)}+\|v\|_{L^1(B_{k+2}^+)}+\|p-(p)_{ B^+_{k+2}}\|_{L^1(B_{k+2}^+)}+\|p_k\|_{L^1(B_{k+2}^+)}\Big)
 			\nonumber\\
 			&\leq  2^{bk}N\Big(  C^+(\rho_{k+3})^\frac{1}{q}+
 			A^+(\rho_{k+3})^\frac{1-4c}{2(1+c)}E^+(\rho_{k+3})^\frac{1+6c}{2(1+c)}
 			+ A^+(\rho_{k+3})^\frac12E^+(\rho_{k+3})^{\frac12}\nonumber\\
 			&\quad+ F^+(\rho_{k+3})^\frac12 + P^+(\rho_{k+3}) \Big),
 		\end{align}
 		where  $b$ and $N$ depend only on $c$.
 		
 		The combination of \eqref{4.9}, \eqref{4.10} and  \eqref{4.11} yields
 		\begin{align*}
 			& \int_{B_{k+1}^+} |p-(p)_{B^+_{k+1}}  ||u|\ dx\\
 			&\leq  2^{bk}N \Big(   C^+(\rho_{k+3})^\frac{1}{q}+   A^+(\rho_{k+3})^\frac{1-4c}{2(1+c)}E^+(\rho_{k+3})^\frac{1+6c}{2(1+c)}+ A^+(\rho_{k+3})^\frac12E^+(\rho_{k+3})^{\frac12}\\
 			&\quad+ F^+(\rho_{k+3})^\frac12  +  P^+(\rho_{k+3})\Big) \cdot A^+(\rho_{k+3})^\frac{q+6cq-5-5c}{5(1+c)(q-2)}C^+(\rho_{k+3})^\frac{3-7c}{5(1+c)(q-2)}\\
 			& \leq 2^{bk}N \Big(\varepsilon^{\frac{2}{q}}+\varepsilon^{\frac{2-3c}{q(1+c)}}E^+(\rho_{k+3})^\frac{1+6c}{2(1+c)}+\varepsilon^\frac{2}{q}E^+(\rho_{k+3})^\frac12+\varepsilon^{\frac{1}{2}+\frac{1}{q}}+\varepsilon^{1+\frac{1}{q}}\Big)\\
 			&\leq 2^{bk}N \Big(\varepsilon^{\frac{2-3c}{q(1+c)}}E^+(\rho_{k+3})^\frac{1+6c}{2(1+c)}+\varepsilon^\frac{2}{q}E^+(\rho_{k+3})^\frac12 +\varepsilon^{\frac{2}{q}}\Big),
 		\end{align*}
 		where   $b$ and $N$ depend only on $c$.
 		
 		Inserting   \eqref{4.6}-\eqref{4.8} and the  above inequality into \eqref{4.5}, one derives
 		\begin{align}\label{4.12}
 			E^+(\rho_k)\leq& 2^{(2+b)k}N\Big(\varepsilon^{\frac{q+5aq-10a}{2q^2}}E^+(\rho_{k+3})^\frac{5(q-aq+2a)}{4q}+ \varepsilon^{ \frac{3}{q}}+\varepsilon^{\frac{2-3c}{q(1+c)}}E^+(\rho_{k+3})^{\frac{1+6c}{2(1+c)}}\nonumber\\
 			&+\varepsilon^\frac{2}{q}E^+(\rho_{k+3})^\frac12+\varepsilon^{\frac{2}{q}}+ \varepsilon^{\frac{q+5aq-10a}{3q^2}}E^+(\rho_{k+3})^\frac{5(q-aq+2a)}{6q}+\varepsilon^\frac{2}{q}+\varepsilon^{\frac12+\frac{1}{q}}\Big)\nonumber\\
 			\leq& 2^{(2+b)k}N\Big(\varepsilon^{\frac{q+5aq-10a}{2q^2}}E^+(\rho_{k+3})^\frac{5(q-aq+2a)}{4q}+\varepsilon^{\frac{2-3c}{q(1+c)}}E^+(\rho_{k+3})^{\frac{1+6c}{2(1+c)}}\nonumber\\
 			&+\varepsilon^\frac{2}{q}E^+(\rho_{k+3})^\frac12+ \varepsilon^{\frac{q+5aq-10a}{3q^2}}E^+(\rho_{k+3})^\frac{5(q-aq+2a)}{6q}+\varepsilon^{\frac{2}{q}} \Big),
 		\end{align}
 		where $b=b(c)$ and $N=N(a, c, q)$ are  some positive constants.
 		Since   $ c\in \Big(\frac{5-q}{6q-5}, \frac14\Big)$, if we take $ a\in \Big(\frac{q}{5q-10}, 1\Big)$,  then all the exponents of $E^+(\rho_{k+3})$ are   positive and smaller than one.
 		Hence, by \eqref{4.12} and Young's inequality, for any $\delta>0$, we have
 		\begin{align*}
 			E^+(\rho_k)\leq \delta^3 E^+(\rho_{k+3})+2^{(2+b)k} N\varepsilon^\beta,
 		\end{align*}
 		where $  N= N(c, q, \delta)$  and $\beta=\beta(c, q)$ are some positive constants.
 		
 		Let $\delta=3^{-(2+b)}$.   Multiplying both sides of  the  above inequality by $\delta^k$ and summing over $k$ from $1$ to infinity, we have
 		\begin{align*}
 			\sum_{k=1}^\infty \delta^k E^+(\rho_k)\leq \sum_{k=4}^\infty  \delta^{k}E^+(\rho_k)+   N\varepsilon^\beta \sum_{k=1}^\infty\Big(\frac{2}{3}\Big)^{(2+b)k},
 		\end{align*}
 		which implies
 		\begin{align*}
 			E^+(15/16)\leq  N\varepsilon^\beta.
 		\end{align*}
 		The lemma is proved.
 	\end{proof}
 	
 	In the following  lemma, we will   verify the decay estimates of $A^+(x, \rho)+E^+(x, \rho)$, $G^+(x, \rho)$, and  $P^+(x, \rho)$ with respect to  radius  based on the above three lemmas and the condition \eqref{1.4}. This is a key lemma for us to prove Theorem \ref{Them 2}.
 	\begin{lemma}\label{lem 4.4}
 		There exists a constant $ \varepsilon>0$ satisfying the following property.  If
 		\begin{align}\label{4.14}
 			A^+(\rho_0)^\frac{2-l}{2(1+l)}+  E^+(\rho_0)^\frac{2-l}{2(1+l)}+F^+(\rho_0)^\frac{2-l}{2(1+l)}+P^+(\rho_0) \leq  \varepsilon,
 		\end{align}
 		then there exists sufficiently small $l\in\Big(0, \frac{4q-10}{16q-25}\Big)$ such that for any  $x\in \overline{ B_{ \rho_0/5}^+} $ and $\rho\in \big(0,
 		\rho_0/5\big)$,  we have
 		\begin{align}\label{4.15}
 			A^+(x, \rho)+  E^+(x, \rho)  \leq N \varepsilon^{\frac{(2-l)(2q-5-ql)}{5q-10}} \Big( \frac{\rho}{\rho_0}\Big) ^{\frac{(2-l)^2}{2}},
 		\end{align}
 		\begin{align}\label{4.16}
 			G^+(x, \rho)  \leq  N  \varepsilon^\frac{(1+c)(1+l)(2q-5-ql)}{(q-2)(4-c)}
 			\Big(\frac{\rho}{\rho_0}\Big)^\frac{5(1+c)(1+l)(2-l)}{2(4-c)},
 		\end{align}
 		and
 		\begin{align}\label{4.17}
 			P^+(x,\rho) \leq N \varepsilon^{\frac{(1+l)(2q-5-ql)}{5q-10}} \Big(\frac{\rho}{\rho_0}\Big) ^{\frac{(1+l)(2-l)}{2}},
 		\end{align}
 		where $N$ is a positive constant depending on $c$, $l$, and $r'\in (1+c, +\infty)$ in Lemma \ref{lem 4.2}, but independent of $ \varepsilon$, $\rho_0$, and $ \rho$.
 	\end{lemma}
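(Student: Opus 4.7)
The plan is to follow the same induction strategy as in Lemma~\ref{lem 3.4}, with the interior estimates replaced by the boundary analogues in Lemmas~\ref{lem 4.1} and~\ref{lem 4.2}. First I would use the scaling \eqref{eq10.23} to reduce to a normalized problem in which $\tilde\rho^{\alpha} = N\tilde\varepsilon$ for an auxiliary exponent $\alpha \in (2-l, 2)$, where $\tilde\rho$ is a fixed fraction of $\rho_0$ chosen so that $B^+(x, 5\tilde\rho/3) \subset B^+_{\rho_0}$ for every $x \in \overline{B^+_{\rho_0/5}}$. The inductive radii would be $\rho_k = \tilde\rho^{(1+\beta)^k}$ for a small constant $\beta>0$ to be fixed, and the target estimates at scale $\rho_k$ would read $A^+(x,\rho_k)+E^+(x,\rho_k)\le\rho_k^{\alpha}$, together with $G^+(x,\rho_k)\le\rho_k^{5\alpha(1+c)(1+l)/((4-c)(2-l))}$ and $P^+(x,\rho_k)\le\rho_k^{\alpha(1+l)/(2-l)}$.

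For the base case $k=0$, the smallness assumption \eqref{4.14} together with the boundary Poincar\'e inequality $A^+\le NE^+$ (which already appears in Lemma~\ref{lem 4.1}) supplies the bounds on $A^+(x,\tilde\rho)+E^+(x,\tilde\rho)+F^+(x,\tilde\rho)+P^+(x,\tilde\rho)$, while applying Lemma~\ref{lem 4.2} on $B^+(x,5\tilde\rho/3)\subset B^+_{\rho_0}$ gives the corresponding bound on $G^+(x,\tilde\rho)$. For the inductive step I would feed the hypothesis into Lemma~\ref{lem 4.1} with $\gamma=\rho_k^{\beta}$ and radius $\rho_k$, so that each term on the right-hand side becomes a power of $\rho_k$ whose exponent strictly exceeds $\alpha(1+\beta)$ once $\beta$ is small enough; the residual gain $\xi>0$ is absorbed by choosing $\tilde\varepsilon$ small. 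The decay of $G^+$ is iterated in the same fashion using Lemma~\ref{lem 4.2}, and $P^+(x,\rho_{k+1})$ is then extracted from $G^+(x,\rho_{k+1})$ via H\"older's inequality, exactly as in the interior case.

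Interpolating between inductive radii $\rho_{k+1}\le\rho<\rho_k$ costs only a factor $(\rho_k/\rho_{k+1})^m\le\rho_k^{-m\beta/(1+\beta)}$, which is harmless because the decay exponents strictly dominate their target values. Finally, the normalization is removed by applying the natural rescaling \eqref{eq10.23} with $\lambda=\rho_0/(N\tilde\varepsilon)^{1/\alpha}$, which converts the intrinsic powers of $\rho/\lambda$ into the $\varepsilon$-dependent prefactors stated in \eqref{4.15}--\eqref{4.17}.

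The main obstacle will be the bookkeeping of exponents across the three coupled iterations. Compared to the interior Lemma~\ref{lem 2}, the boundary pressure estimate in Lemma~\ref{lem 4.2} contains the extra factor $\gamma^{-5(2-3c)/(4-c)}$ together with a term that decays only like $\gamma^{25/(4-c)-25(1+c)/(r'(4-c))}$, so the large parameter $r'\in(1+c,+\infty)$ must be fixed large enough that this latter exponent dominates. Balancing this improved but weaker $G^+$ decay against the hypotheses required to close Lemma~\ref{lem 4.1} and to recover $P^+$ by H\"older forces the strictly smaller range $l\in\bigl(0,(4q-10)/(16q-25)\bigr)$ in place of the interior range $(0,(4q-10)/(5q-5))$, and one must verify that the chain of inequalities closes consistently for every $q\in(5/2,10/3)$ and every admissible $c$.
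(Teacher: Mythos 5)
Your outline captures the boundary-center part of the argument, but there are two genuine gaps. First, you propose to run the boundary iteration (Lemmas \ref{lem 4.1} and \ref{lem 4.2}) at \emph{every} $x\in \overline{B^+_{\rho_0/5}}$. Those lemmas are stated, and their proofs (via the boundary Poincar\'e inequality $A^+\leq NE^+$) are valid, only for centers $x_0\in\{x_5=0\}$; for an interior point $x$ with $0<\rho<d_x$ the set $B^+(x,\rho)=B(x,\rho)$ does not meet the boundary and $u$ has no vanishing trace there, so the scheme does not apply. The paper therefore splits into cases: for $x$ on the flat boundary it runs the iteration you describe, and for interior $x$ it compares $\rho$ with $d_x/2$, uses the boundary decay at the projection $x^*$ at scale $2d_x$ to verify the smallness hypothesis \eqref{3.20} of the \emph{interior} Lemma \ref{lem 3.4} in $B(x,d_x)$ (with a separate dichotomy for $F$), and then composes the two decays. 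This composition is exactly what produces the exponents in \eqref{4.15}--\eqref{4.17}, e.g.\ $(\rho/\rho_0)^{(2-l)^2/2}$ and $\varepsilon^{(2-l)(2q-5-ql)/(5q-10)}$; a pure boundary iteration plus rescaling only yields $\varepsilon^{(2-l)/2}(\rho/\rho_0)^{2-l}$ as in \eqref{4.28}, so your route cannot deliver the statement as written for interior centers.

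Second, your claim that each term closes ``once $\beta$ is small enough'' fails for the $G^+$ iteration. In Lemma \ref{lem 4.2} the group carrying the factor $\gamma^{\frac{25}{4-c}-\frac{25(1+c)}{r'(4-c)}}$ contains $E^+(x_0,\rho)^{\frac{5(1+c)}{2(4-c)}}$, whose induced decay $\rho_k^{\frac{5\alpha(1+c)}{2(4-c)}}$ falls short of the target exponent $\frac{5\alpha(1+c)(1+l)}{(4-c)(2-l)}$ by a quantity of order $l$, and the compensating gain from the positive $\gamma$-power is only of order $\beta$ \emph{uniformly in} $r'$; this forces the lower bound on the step exponent in \eqref{4.24}, roughly $\tilde\beta\gtrsim \frac{3\alpha l}{(6-10/r')(2-l)-2\alpha(1+l)}\approx 0.75\,l$. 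Meanwhile the $A^++E^+$ iteration through Lemma \ref{lem 4.1} requires $\beta\lesssim \frac{3\alpha l}{(4-2l)(\alpha+2)}\approx 0.375\,l$, so no single choice of $\beta$ closes both single-step iterations, and taking $r'$ large does not remove the obstruction. The paper's fix is to keep a tiny step $\beta=l^2$ for the radii and for the $A^++E^+$ and $P^+$ parts, but to advance the $G^+$ estimate by jumping $m\sim 1/l$ radii at once, so that the effective exponent $\tilde\beta=(1+\beta)^{m+1}-1\sim l$ lands inside the window \eqref{4.24}; this multi-step induction (assume \eqref{4.18} for $k\le k_0$ with $k_0\geq m$, prove it for $k_0+1$, and treat $k\le m$ directly by smallness of $\varepsilon$) is an essential ingredient missing from your proposal.
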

 	\begin{proof}
 		We prove this lemma by  an iteration argument.
 		We discuss two cases according to the position of $x$.
 		We first consider the case when  $x \in  B_{\rho_0/5}\cap \{x^5=0\}$  and  derive  the  decay estimates of  scale invariant quantities. Then, by another iteration argument,  we extend  previous results to  the general  case   $x\in B^+_{\rho_0/5}$.
 		
 		Similar to the  interior case, we denote $\rho_k=\tilde\rho^{(1+\beta)^k}$, where   $\tilde\rho=3\rho_0/5$ and $\beta>0$ is some  small constant which will be specified later.
 		
 		{\bf Case 1:  $x\in B_{\rho_0/5}\cap \{x_5=0\}$.}
 		We fix the  parameter $\alpha\in (2-l, 2)$.  By the   scale invariant property, we first assume $\tilde\rho^{\alpha }= N \varepsilon$.  We only need to prove the following decay estimates
 		\begin{align}\label{4.18}
 			A^+(x, \rho_k)+ E^+(x, \rho_k)  \leq  \rho_k^{\alpha}, \   G^+(x, \rho_k)\leq \rho_k^{\frac{5\alpha(1+c)(1+l)}{(4-c)(2-l)}},\  P^+(x, \rho_k)\leq \rho_k^{\frac{\alpha(1+l)}{2-l}}   .
 		\end{align}
 		
 		Let us prove the above estimates for  $k=0$.
 		Since $B^+(x, \tilde\rho)\subset B^+_{\rho_0},$ by  \eqref{4.14}, we have
 		\begin{align}\label{4.19}
 			A^+(x, \tilde\rho) \leq NA^+(\rho_0) \leq   N \varepsilon^\frac{2(1+l)}{2-l}\leq N \varepsilon=\tilde\rho^\alpha
 		\end{align}
 		and
 		\begin{align}\label{4.20}
 			E^+(x, \tilde\rho) \leq NE^+(\rho_0) \leq   N \varepsilon^\frac{2(1+l)}{2-l}\leq N \varepsilon=\tilde\rho^\alpha.
 		\end{align}
 		For  $G^+(x, \tilde \rho)$ and $P^+(x, \tilde\rho)$,   since  $B^+(x, 4\tilde\rho/3)\subset B^+_{\rho_0}$, by Lemma \ref{lem 4.2},  \eqref{4.14} and H\"older's inequality, we  obtain
 		\begin{align}\label{4.21}
 			G^+(x, \tilde\rho)\leq& N \Big( E^+(x, 4\tilde\rho/3)^{\frac{5(1+c)}{4-c}}
 			+F^+(x, 4\tilde\rho/3)^{\frac{5(1+c)}{2(4-c)}}\notag \\
 			&\quad +E^+(x, 4\tilde\rho/3)^{\frac{5(1+c)}{2(4-c)}} +P^+(x, 4\tilde\rho/3)^{\frac{5(1+c)}{4-c}}  \Big)\nonumber\\
 			\leq&  N \Big( E^+(\rho_0)^{\frac{5(1+c)}{4-c}}
 			+F^+(\rho_0)^{\frac{5(1+c)}{2(4-c)}}+  E^+(\rho_0)^{\frac{5(1+c)}{2(4-c)}} +P^+(\rho_0)^{\frac{5(1+c)}{4-c}}  \Big)\nonumber\\
 			\leq& N\Big( \varepsilon^{\frac{2(1+l)}{2-l}\cdot\frac{5(1+c)}{4-c} } + \varepsilon^{\frac{2(1+l)}{2-l}\cdot\frac{5(1+c)}{2(4-c)}} + \varepsilon^{\frac{2(1+l)}{2-l}\cdot\frac{5(1+c)}{2(4-c)}}+ \varepsilon^\frac{5(1+c)}{4-c}\Big)\nonumber\\
 			\leq &(N \varepsilon)^{\frac{5(1+c)(1+l)}{(2-l)(4-c)}} = \tilde\rho^{\frac{5\alpha(1+c)(1+l)}{(2-l)(4-c)}}
 		\end{align}
 		and
 		\begin{align}\label{4.22}
 			P^+(x,\tilde\rho)\leq N G^+(x, \tilde\rho)^\frac{4-c}{5(1+c)}\leq (N \varepsilon)^\frac{(1+l) }{2-l}= \tilde\rho^\frac{\alpha(1+l)}{2-l},
 		\end{align}
 		where $N=N(c, l, r')$.
 		Combining \eqref{4.19}-\eqref{4.22},  we get \eqref{4.18} for $k=0$.
 		
 		To prove \eqref{4.18} for $k>0$, since $A^+(x, \rho_k)+E^+(x, \rho_k)$  and $P^+(x, \rho_k)$ can be estimated by  the same method as in Lemma \ref{lem 3.4},  we only consider $G^+(x, \rho_k)$. We suppose that \eqref{4.18} holds for $k=1,\ldots,k_0\geq m$, where $m$ is an integer to be specified later.  We will  show that  the estimate of $G^+$ also holds for $\rho_{k_0+1}$.
 		
 		Taking $\tilde \beta=(1+\beta)^{m+1}-1$, $\gamma=\rho_{k_0-m}^{\tilde\beta}$, and $\rho=\rho_{k_0-m}$, by  Lemma \ref{lem 4.2}, we have
 		\begin{align}\label{4.23}
 			G^+(x, \rho_{k_0+1})\leq& N\Big[\rho_{k_0-m}^{-\frac{5(2-3c)\tilde\beta}{4-c}+\frac{5\alpha(1-4c) }{2(4-c)}+\frac{5\alpha(1+6c)}{2(4-c)}}+\rho_{k_0-m}^{-\frac{5(2-3c)\tilde\beta}{4-c} +\frac{5\alpha(1+c)}{4-c} }\nonumber\\
 			&+\rho_{k_0-m}^{-\frac{5(2-3c)\tilde\beta}{4-c}+\frac{5(1+c)(3q-5)}{q(4-c)}}\|f\|_{L^q}^\frac{5(1+c)}{4-c}\nonumber\\
 			&+\rho_{k_0-m}^{-\frac{5(2-3c)\tilde\beta}{4-c}+25\tilde\beta [\frac{1}{4-c}-\frac{1+c}{r'(4-c)}]}\Big(\rho_{k_0-m}^{\frac{5\alpha(1+c)}{2(4-c)}}
 			+\rho_{k_0-m}^{\frac{5\alpha(1+c)(1+l)}{(4-c)(2-l)}}\Big)\Big],
 		\end{align}
 		where $N=N(c, l, r')$. If we choose
 		\begin{align}\label{4.24}
 			\frac{3\alpha l}{(6-10/r')(2-l)-2\alpha(1+l)}<\tilde\beta< \frac{(1+c)[6q-\alpha q-10-(3q+\alpha q-5)l]}{q[(1+c)(1+l)\alpha+(2-3c)(2-l)]},
 		\end{align}
 		then all  the exponents on the right-hand side of  \eqref{4.23} are greater than $$\frac{5\alpha(1+c)(1+l)(1+\tilde\beta)}{(4-c)(2-l)}.$$
 		Here, it is sufficient to take a small   $l\in\Big(0, \frac{ 4q-10 }{16q-25}\Big). $ Indeed we can choose $\beta=l^2$ and take a sufficiently large integer $m$ of order $1/l$ so that $\tilde\beta\sim l$. A simple calculation shows that \eqref{4.24} hold.
 		
 		Hence,  there exists  a  small constant $\xi>0$ such that
 		\begin{align}\label{4.26}
 			G^+(x, \rho_{k_0+1})\leq N\rho_{k_0-m}^{\big[\frac{5\alpha(1+c)(1+l)}{(4-c)(2-l)}+\xi\big](1+\tilde\beta)}\leq N \rho_{k_0+1}^{\frac{5\alpha(1+c)(1+l)}{(4-c)(2-l)}+\xi}.
 		\end{align}
 		By taking   $ \varepsilon>0$ sufficiently small,  one has
 		\begin{align*}
 			N\rho_{k_0+1}^\xi< N \tilde\rho^\xi\leq N(N \varepsilon)^\frac{\xi}{2}<1.
 		\end{align*}
 		Inserting the  above inequality into \eqref{4.26}, we obtain
 		\begin{align*}
 			G^+(x, \rho_{k_0+1})\leq \rho_{k_0+1}^{\frac{5\alpha(1+c)(1+l)}{(4-c)(2-l)}}.
 		\end{align*}
 		Thus, the estimate of $G^+$ in \eqref{4.18} holds for $k=k_0+1$ provided that it holds for $k=1,\ldots,k_0$.
 		Now analogous to the case $k=0$, by choosing $ \varepsilon$ sufficiently small, we can obtain   \eqref{4.18} for   $k=1,\dots, m$. Hence, by induction we have \eqref{4.18} for any integer $k\ge 0$.
 		
 		For any $\rho\in \big(0, 3\rho_0/5\big)$,  there exists a positive integer $k$ such that $\rho_{k+1}\leq\rho<\rho_k$. Similar to the calculations in   \eqref{3.40}-\eqref{3.43}, we  have
 		\begin{align*}
 			A^+(x, \rho)\leq \rho^{2-l},\ \  E^+(x, \rho)\leq   \rho^{2-l},  \ \ G^+(x, \rho)\leq \rho^\frac{5(1+c)(1+l)}{4-c}, \ \ P^+(x,  \rho)\leq \rho^{1+l}.
 		\end{align*}
 		Hence, \eqref{4.18} are proved  for any  $x\in B_{\rho_0/5}\cap\{x_5=0\}$  when  $\tilde\rho^\alpha=N  \varepsilon$.
 		
 		For the general case, due to the scale invariant property, we derive the decay estimates of all the scale invariant quantities by imposing  additional scaling factors on the right-hand side of \eqref{4.18}. To be precise,    if $(u, p)$ is a suitable weak solution to \eqref{1.1} in $  B^+(x, \rho_0)$, by taking $\lambda=\rho_0/(N \varepsilon)^{\frac{1}{\alpha}}$,
 		we know that $(u_\lambda, p_\lambda)$ is also a suitable weak solution to \eqref{1.1} in $  B^+\big(x, (N \varepsilon)^\frac{1}{\alpha}\big) $. Hence,  similar to the interior case, we have
 		\begin{align}\label{4.28}
 			A^+(x, \rho)+E^+(x, \rho)\leq N \varepsilon^\frac{2-l}{\alpha}\Big(  \frac{\rho}{\rho_0}\Big)^{2-l} \leq N \varepsilon^\frac{2-l}{2}\Big(  \frac{\rho}{\rho_0}\Big)^{2-l},
 		\end{align}
 		\begin{align}\label{4.29}
 			G^+(x, \rho)\leq N \varepsilon^\frac{5(1+c)(1+l)}{\alpha(4-c)} \Big(\frac{\rho}{\rho_0}\Big)^\frac{5(1+c)(1+l)}{4-c} \leq N \varepsilon^\frac{5(1+c)(1+l)}{2(4-c)} \Big(\frac{\rho}{\rho_0}\Big)^\frac{5(1+c)(1+l)}{4-c},
 		\end{align}
 		and
 		\begin{align}\label{4.30}
 			P^+(x,  \rho)\leq N \varepsilon^\frac{1+l}{\alpha}\Big(\frac{\rho}{\rho_0}\Big)^{1+l} \leq N \varepsilon^\frac{1+l}{2}\Big(\frac{\rho}{\rho_0}\Big)^{1+l}, \end{align}
 		where  $N$ is a positive constant depending on $c, l$ and $ r'$.

 		{\bf Case 2:  $x\in \overline{B^+_{ \rho_0/5}}$.}
 		By comparing  $d_x$, the distance of $x$ to the boundary $\{x_5=0\}$, with  $\rho\in\big(0, \rho_0/5\big)$, the radius of  the ball around $x$, we further consider two cases to derive the decay estimates of scale invariant quantities. We denote the projection of $x$ on the boundary by $ x^*$.
 		
 		{\bf Case 2.1:  $ \rho\geq d_x/2$.}    Since $B^+ (x, \rho)\subset B^+\big( x^*,  3 \rho\big)$, we have
 		\begin{align*}
 			A^+(x, \rho)\leq NA^+\big( x^*,  3\rho\big), \quad E^+(x, \rho)\leq NE^+\big( x^*,  3\rho\big),\\
 			G^+(x, \rho)\leq NG^+(  x^*, 3\rho),\quad P^+(x, \rho)\leq N P^+(  x^*,  3\rho).
 		\end{align*}
 		Thus, by the conclusions of the boundary decay estimates \eqref{4.28}-\eqref{4.30}, we have
 		\begin{align*}
 			A^+(x,\rho)+E^+(x, \rho)  \leq N  \varepsilon^{\frac{2-l}{2}}  \Big( \frac{\rho}{\rho_0}\Big) ^{2-l } ,
 		\end{align*}
 		\begin{align*}
 			G^+(x,\rho) \leq N  \varepsilon^\frac{5(1+c)(1+l)}{2(4-c)}  \Big( \frac{\rho}{\rho_0}\Big) ^{\frac{5(1+c)(1+l)}{4-c}},
 		\end{align*}
 		and
 		\begin{align*}
 			P^+(x,\rho) \leq  N \varepsilon^{\frac{1+l}{2}}  \Big( \frac{\rho}{\rho_0}\Big) ^{1+l},
 		\end{align*}
 		where $N=N(c, l, r')$.
 		
 		{\bf  Case 2.2:  $\rho< d_x/2$.}
 		Since $$B^+(x, \rho)=B(x, \rho)\subset B(x, d_x)\subset B^+( x^*,  2 d_x), $$      applying the boundary results \eqref{4.28} and \eqref{4.30}  to $x^*$, we  have
 		\begin{align}\label{4.31}
 			A(x, d_x)+  E(x, d_x)\leq N \Big(A^+(x^*, 2d_x)+  E^+(x^*, 2d_x)\Big)\leq N \varepsilon^{\frac{2-l}{2}}  \Big(\frac{2d_x}{\rho_0} \Big)^{2-l}
 		\end{align}
 		and
 		\begin{align}\label{4.32}
 			P(x, d_x)\leq N P^+(x^*, 2d_x)\leq N \varepsilon^{\frac{1+l}{2}} \Big(\frac{2d_x}{\rho_0}\Big)^{1+l},
 		\end{align}
 		where $N=N(c, l, r)$.
 		Combining  \eqref{4.31} and \eqref{4.32},  we obtain
 		\begin{align}\label{AEP}
 			&A(x, d_x)+  E(x, d_x)+ P(x, d_x)^\frac{2-l}{1+l}\notag\\
 			& \leq N \varepsilon^\frac{2-l}{2}\Big(\frac{2d_x}{\rho_0}\Big)^{2-l}\leq N \varepsilon^\frac{4q-10-2ql}{5q-10}\Big(\frac{2d_x}{\rho_0}\Big)^{2-l}.
 		\end{align}
 		
 		For    $F(x, d_x)$,  when $\Big(\frac{2d_x}{\rho_0}\Big)< \varepsilon^\frac{2q(1+l)}{5(q-2)(2-l)}$, by  H\"older's inequality, one has
 		\begin{align}\label{F1}
 			F(x, d_x)=F^+(x, d_x)\leq N d_x^{6-\frac{10}{q}}\|f\|_{L^q(B_{1}^+)}^2.
 		\end{align}
 		Otherwise, by the condition \eqref{4.14},  one has
 		\begin{align}\label{4.34}
 			F(x, d_x)\leq NF^+(x^*, 2 d_x)\leq N \Big(\frac{2d_x}{\rho_0}\Big)F^+(\rho_0)\leq N  \varepsilon^\frac{2(1+l)}{2-l} \Big(\frac{2d_x}{\rho_0}\Big).
 		\end{align}
 		Since $\rho_0\leq 1$,  by \eqref{F1} and \eqref{4.34}, we obtain
 		\begin{align}\label{F2}
 			F(x, d_x)^\frac{2-l}{2(1+l)}\leq N \varepsilon^\frac{4q-10-2ql}{5q-10}\Big(\frac{2d_x}{\rho_0}\Big)^{2-l}.
 		\end{align}
 		Thus,  by \eqref{AEP} and \eqref{F2},  if we choose $ \varepsilon$   sufficiently small such that
 		\begin{align*}
 			N \varepsilon^{\frac{4q-10-2ql}{5q-10}}\Big(\frac{2d_x}{\rho_0}\Big)^{2-l}\leq \tilde\varepsilon,
 		\end{align*}
 		where $\tilde\varepsilon$ is from  Lemma \ref{lem 3.4}, then   the   condition \eqref{3.20} in Lemma \ref{lem 3.4} is satisfied. Hence, from \eqref{3.21}-\eqref{3.23},  we have
 		\begin{align*}
 			A^+(x, \rho)+  E^+(x, \rho) =&A(x, \rho)+  E(x, \rho)\notag\\
 			\leq& N \Big[ \varepsilon^{\frac{4q-10-2ql}{5q-10}}\Big(\frac{2d_x}{\rho_0}\Big)^{2-l}\Big]^\frac{2-l}{2}\Big(\frac{\rho}{2d_x}\Big)^{2-l}\notag\\
 			\leq& N \varepsilon^{\frac{(2-l)(2q-5-ql)}{5q-10}} \Big( \frac{\rho}{\rho_0}\Big) ^{\frac{(2-l)^2}{2}},
 		\end{align*}
 		\begin{align*}
 			G^+(x, \rho)=G(x, \rho)&\leq N \Big[ \varepsilon^{\frac{4q-10-2ql}{5q-10}}\Big(\frac{2d_x}{\rho_0}\Big)^{2-l}\Big]^\frac{5(1+c)(1+l)}{2(4-c)}\Big(\frac{\rho}{2d_x}\Big)^\frac{5(1+c)(1+l)}{4-c}\notag\\
 			&\leq N  \varepsilon^\frac{(1+c)(1+l)(2q-5-ql)}{(q-2)(4-c)}\Big(\frac{\rho}{\rho_0}\Big)^\frac{5(1+c)(1+l)(2-l)}{2(4-c)},
 		\end{align*}
 		as well as
 		\begin{align*}
 			P^+(x,\rho)= P(x,\rho)\leq& N \Big[ \varepsilon^{\frac{4q-10-2ql}{5q-10}}\Big(\frac{2d_x}{\rho_0}\Big)^{2-l}\Big]^\frac{1+l}{2}\Big(\frac{\rho}{2d_x}\Big)^{1+l}\notag\\
 			\leq &N \varepsilon^{\frac{(1+l)(2q-5-ql)}{5q-10}} \Big(\frac{\rho}{\rho_0}\Big) ^{\frac{(1+l)(2-l)}{2}},
 		\end{align*}
 		where $N=N(c, l, r')$.
 		The lemma is proved.
 	\end{proof}
 	
 	The rest of this  section is devoted to the proof of Theorem \ref{Them 2}. We will use Lemma \ref{lem 4.4} to prove  that $u$ is H\"older continuous in   $\overline{B^+_{1/2}}$ by  Campanato's characterization of H\"older continuity and   a covering argument. By  the   condition \eqref{1.4},  H\"older's inequality which gives
 	\begin{align*}
 		A^+(1) \leq NC^+(1)^\frac{2}{q}\leq N\varepsilon^{\frac{2}{q}}
 	\end{align*}
 	and Lemma \ref{lem 4.3},
 	we can choose $\varepsilon>0$ sufficiently small  such that the condition \eqref{4.14}  holds with $\rho_0=\frac{15}{16}$. Hence,  by Lemma \ref{lem 4.4}, we obtain  the  decay estimates  \eqref{4.15}-\eqref{4.17}. Let  $x\in\overline{ B_{3/16}^+}$ and  $\rho_0=\frac{15}{16}$. According to the position of $x$, we discuss two cases to prove the regularity of $u$.
 	
 	Let us first consider the case when   $x\in B_{3/16}\cap \{x^5=0\}$.
 	We decompose the suitable weak solution $u$ of \eqref{1.1} as  $ u= w +v$,  where $w$ satisfies the equation
 	\begin{align}\label{weq}
 		\Delta w_i=\partial_i\big(p-(p)_{B^+(x,  \rho)}\big)+\partial_j\big(u_iu_j\big)+f_i\quad \text{in}\ B^+(x,   \rho)
 	\end{align}
 	with the zero Dirichlet boundary condition,    where  $\rho\in(0, 3/16)$.
 	By the $L^p$ estimate for elliptic equations, we have
 	\begin{align}\label{w}
 		&\|\nabla w\|_{L^\frac{10}{7}(B^+(x,   \rho))}\leq N\Big(\big\| p-(p)_{B^+(x,  \rho)}\big\|_{L^\frac{10}{7}(B^+(x, \rho))}\notag\\
 		&\qquad +\big\||u|^2\big\|_{L^\frac{10}{7}(B^+(x,  \rho))}+ \rho\|f\|_{L^\frac{10}{7}(B^+(x, \rho))}\Big).
 	\end{align}
 	
 	
 	We first estimate the pressure term on the right-hand side of  \eqref{w}.  Due to \eqref{4.16} and \eqref{4.17},  we have
 	\begin{align*}
 		G^+(x, \rho)\leq N \rho^\frac{5(1+c)(1+l)(2-l)}{2(4-c)},\quad
 		P^+(x, \rho)
 		\leq &N   \rho^{\frac{(1+l)(2-l)}{2}},
 	\end{align*}
 	where $N=N(c, l, r')>0$ is some constant.
 	Hence,   by H\"older's inequality and the above decay estimates,  we have
 	\begin{align}\label{4.37}
 		&\Big(\int_{B^+(x,\rho)}|p-(p)_{B^+(x, \rho)}|^\frac{10}{7}\ dx\Big)^\frac{7}{5}\notag\\
 		&\leq   \Big(\int_{B^+(x,\rho)}|p-(p)_{B^+(x, \rho)}| \ dx\Big)^\frac{9c-1}{1+6c}\Big(\int_{B^+(x,\rho)}|p-(p)_{B^+(x, \rho)}|^\frac{5(1+c)}{4-c}\ dx\Big)^\frac{3(4-c)}{5(1+6c)}\nonumber\\
 		&\leq  \rho^3P^+(x,\rho)^\frac{9c-1}{1+6c}G^+(x,\rho)^\frac{3(4-c)}{5(1+6c)}\leq  N   \rho^{5+l-l^2},
 	\end{align}
 	where $N=N(c, l, r')>0$ and  $\rho\in(0, 3/16)$ .
 	
 	For the second term on the right-hand side of \eqref{w},  due to  \eqref{4.15}, one has
 	\begin{align*}
 		A^+(x, \rho)+  E^+(x, \rho)
 		\leq N  \rho^{\frac{(2-l)^2}{2}},
 	\end{align*}
 	where $N=N(c, l, r')>0$ is some constant.
 	Thus,  by  the Sobolev embedding inequality and  the above decay rate, we derive
 	\begin{align}\label{4.36}
 		&\Big( \int_{B^+(x,\rho)}|u|^\frac{20}{7} \ dx\Big)^\frac{7}{5}\nonumber\\
 		&\leq N  \Big(\int_{B^+(x,\rho)}|u|^2 \ dx\Big)^\frac12\Big(\int_{B^+(x,\rho)}|\nabla u|^2 \ dx+\rho^{-2}\int_{B^+(x,\rho)}|u|^2 \ dx\Big)^\frac32\nonumber\\
 		&\leq N \rho^3  A^+(x,\rho)^\frac12\Big(E^+(x,\rho)+A^+(x,\rho)\Big)^\frac32\leq N   \rho^{7+l^2-4l},
 	\end{align}
 	where $N=N(c, l, r')>0$ and  $\rho\in(0, 3/16)$ .
 	
 	For  the last  term on the right-hand side of \eqref{w},  by H\"older's
 	inequality, one  has
 	\begin{align}\label{4.38}
 		\rho^2 \Big(\int_{B^+(x,\rho)}|f|^\frac{10}{7} \ dx\Big)^\frac{7}{5}\leq N \Big(\int_{B^+(x,\rho)}| f|^q \ dx\Big)^\frac{2}{q}\rho^{9-\frac{10}{q}},
 	\end{align}
 	where  $\rho\in(0, 3/16)$ is any constant.
 	
 	Thus, by the Sobolev-Poincar\'e  inequality and  \eqref{4.37}-\eqref{4.38}, we have
 	\begin{align}\label{4.39}
 		&\int_{B^+(x,\rho)}|w-(w)_{B^+(x, \rho)}|^2 \ dx\notag\\
 		&\leq N \Big(\int_{B^+(x, \rho)}|\nabla w|^\frac{10}{7} \ dx\Big)^\frac{7}{5}\nonumber\\
 		&\leq N\Big[ \Big(\int_{B^+(x,\rho)}|p-(p)_{B^+(x, \rho)}|^\frac{10}{7}\ dx\Big)^\frac{7}{5}  +   \Big( \int_{B^+(x,\rho)}|u|^\frac{20}{7} \ dx\Big)^\frac{7}{5} \nonumber\\
 		&\quad + \rho^2 \Big(\int_{B^+(x,\rho)}|f|^\frac{10}{7} \ dx\Big)^\frac{7}{5}\Big]\nonumber\\
 		&\leq N\Big(   \rho^{ 5+l-l^2}+   \rho^{7+l^2-4l}+ \rho^{9-\frac{10}{q}}\|f\|_{L^q(B^+(x,  \rho))}^2\Big),
 	\end{align}
 	where $N=N(c, l, r')>0$.
 	
 	Due to the boundary Poincar\'e inequality   and the fact  that  the $L^\infty$  norm of the gradient of harmonic function $v$ in $B^+(x, \gamma\rho)$ can be controlled by its $L^p$ norm    in  $B^+(x, \rho)$ for any $p\in[1, +\infty]$,  we obtain
 	\begin{align}\label{4.40}
 		\int_{B^+(x,\gamma\rho)}|v-(v)_{B^+(x,\gamma\rho)}|^2 \ dx&\leq N (\gamma\rho)^2\int_{B^+(x, \gamma\rho)}|\nabla v|^2 \ dx\nonumber\\
 		&\leq N  (\gamma\rho)^7\|\nabla v\|_{L^\infty(B^+(x,\gamma\rho))} ^2\nonumber\\
 		&\leq  N \gamma^7  \int_{B^+(x, \rho)}|v-(v)_{B^+(x, \rho)}|^2 \ dx,
 	\end{align}
 	where $\gamma\in(0, 1/2)$.
 	
 	The combination of \eqref{4.39} and \eqref{4.40} implies
 	\begin{align}\label{4.41}
 		&\int_{B^+(x,\gamma\rho)}|u-(u)_{B^+(x,\gamma\rho)}|^2 \ dx\notag\\
 		&\leq   \int_{B^+(x,\gamma\rho)}|v-(v)_{B^+(x,\gamma\rho)}|^2 \ dx+ \int_{B^+(x,\gamma\rho)}|w-(w)_{B^+(x,\gamma\rho)}|^2 \ dx\nonumber\\
 		&\leq  N \Big(\gamma^7  \int_{B^+(x, \rho)}|v-(v)_{B^+(x, \rho)}|^2 \ dx+   \rho^{   5+l-l^2}+   \rho^{  7+l^2-4l}+ \rho^{9-\frac{10}{q}}\|f\|_{L^q(B^+(x, \rho))}^2\Big)\nonumber\\
 		&\leq  N \Big(\gamma^7  \int_{B^+(x, \rho)}|u-(u)_{B^+(x, \rho)}|^2 \ dx+  \rho^{   5+l-l^2}+  \rho^{ 7+l^2-4l}+\rho^{9-\frac{10}{q}}\|f\|_{L^q(B^+_1)}^2\Big),
 	\end{align}
 	where $N=N(c, l, r')>0$. 
 	By the condition of $l$ in Lemma \ref{lem 4.4} and by taking $l\in\Big(0, \frac{4q-10}{16q-25}\Big),$  we have
 	\begin{align*}
 		\min\big\{  5+l-l^2,7+l^2-4l, \ 9- 10/q\big\}>5+ l/2 .
 	\end{align*}
 	From \eqref{4.41} and Lemma \ref{lem 4.5}, we obtain
 	\begin{align}\label{4.43}
 		\int_{B^+(x,  \rho)}\big|u-(u)_{B^+(x,  \rho)}\big|^2 \ dx\leq  N  \rho^{5+l/2}
 	\end{align}
 	for any   $x\in B_{3/16}\cap \{x^5=0\}$ and $\rho\in (0, 3/16)$.
 	
 	Next we discuss the case when $x\in B^+_{3/16}$.
 	Let $x^*$ be the projection of $x$ on the boundary $\{x_5=0\}$ and  $d_x$ be the distance  between $x$  and the flat boundary. Let   $\rho\in(0, 1/16)$. 
 	According to the values of $d_x$ and $\rho$, we divide the proof into  two cases.
 	
 	{\bf  Case 1:   $\rho\geq d_x/2$.}  In this case, we have $B^+ (x, \rho)\subset B^+\big(x^*,  3\rho\big)$.  By  \eqref{4.43},  we derive
 	\begin{align}\label{4.44}
 		&\int_{B^+(x,\rho)}|u-(u)_{B^+(x,\rho)}|^2 \ dx\notag\\
 		&\leq N\int_{B^+(x^*,  3\rho)}|u-(u)_{B^+(x^*,  3\rho)}|^2 \ dx\leq N \rho^{5+l/2},
 	\end{align}
 	where $N=N(c, l, r')$.
 	
 	{\bf Case 2:  $\rho< d_x/2$.} In this case, by the decay estimates  \eqref{4.15}-\eqref{4.17} in Lemma \ref{lem 4.4},  we have
 	\begin{align*}
 		A(x, \rho)+  E(x, \rho)=A^+(x, \rho)+  E^+(x, \rho)
 		\leq N  \rho^{\frac{(2-l)^2}{2}},
 	\end{align*}
 	\begin{align*}
 		G(x, \rho)= G^+(x, \rho)\leq N \rho^\frac{5(1+c)(1+l)(2-l)}{2(4-c)},
 	\end{align*}
 	and
 	\begin{align*}
 		P(x,\rho) =P^+(x, \rho)
 		\leq &N   \rho^{\frac{(1+l)(2-l)}{2}},
 	\end{align*}
 	where $N=N(c, l, r')$.
 	
 	Similar to the calculations in \eqref{4.37}-\eqref{4.41}, by the above  decay estimates, we obtain
 	\begin{align*}
 		\int_{B^+(x,\rho)}|u-(u)_{B^+(x,\rho)}|^2 \ dx=&\int_{B(x, \rho)}|u-(u)_{B(x, \rho)}|^2 \ dx\\
 		\leq& N \Big(\frac{\rho}{d}\Big)^7  \int_{B(x, d)}|u-(u)_{B(x, d)}|^2 \ dx \\
 		&+N\Big(  d^{  5+l-l^2}+ d^{ 7+l^2-4l} +d^{9-\frac{10}{q}}\|f\|_{L^q(B_1)}^2\Big)
 	\end{align*}
 	for any $\rho< d <  d_x/2$,   where $N=N(c, l, r')$.
 	Thus,  by Lemma \ref{lem 4.5} and   the condition of $l$,  one has
 	\begin{align}\label{4.45}
 		&\int_{B^+(x,\rho)}|u-(u)_{B^+(x,\rho)}|^2 \ dx\notag\\
 		&\leq N \Big(\frac{\rho}{d_x}\Big)^{5+l/2}  \int_{B(x, d_x)}|u-(u)_{B(x, d_x)}|^2 \ dx +N\rho^{5+l/2},
 	\end{align}
 	where $N=N(c, l, r')$.
 	
 	For the first term on the right-hand side of  \eqref{4.45},  by   \eqref{4.44}, we reach
 	\begin{align*}
 		\int_{B(x, d_x)}|u-(u)_{B(x, d_x)}|^2 \ dx= \int_{B^+(x, d_x)}|u-(u)_{B^+(x, d_x)}|^2 \ dx\leq N d_x^{5+l/2}.
 	\end{align*}
 	Inserting the above inequality into \eqref{4.45}, for any  $x\in B_{3/16}^+$ and  $\rho\in (0, 1/16)$,  we have
 	\begin{align*}
 		\int_{B^+(x, \rho)}|u-(u)_{B^+(x, \rho)}|^2 \ dx\leq N\rho^{5+l/2},
 	\end{align*}
 	where $N=N(c, l, r')$.
 	Hence, by Campanato's characterization of H\"older continuity around the boundary,   we see that $u$ is H\"older continuous in $B^+_{1/4}$. The conclusion of Theorem \ref{Them 2}  follows by a covering argument.
 	
 	\section{Acknowledgement}
 	The author would like to thank Professor Hongjie Dong for many helpful discussions on this work.

 	
 \end{document}